\documentclass[reqno,a4paper]{amsart}

\usepackage[T1]{fontenc}
\usepackage[utf8]{inputenc}
\usepackage{lmodern}
\usepackage{microtype}

\usepackage{comment}

\usepackage{fullpage}

\usepackage{subcaption}

\usepackage{supertabular}
\usepackage{booktabs}
\usepackage{longtable}
\setlength\LTleft{0pt}
\setlength\LTright{0pt}

\usepackage[margin=0pt]{subcaption}

\usepackage{tikz}
\usetikzlibrary{knots,cd,backgrounds}
\usetikzlibrary{math}
\tikzset{
  x=0.4cm,y=0.4cm,
  ext-vert/.append style={circle,draw=black,fill=black,inner sep=0pt,minimum size=0.8mm},
  inn-vert/.append style={circle,draw=black,fill=black,inner sep=0pt,minimum size=0.8mm},
  bgrect/.append style={draw=none}
}

\usepackage{color}

\usepackage{amssymb} 

\usepackage{hyperref} 
\usepackage[hyperpageref]{backref}
\usepackage[nobysame,alphabetic,initials,msc-links]{amsrefs}

\DefineSimpleKey{bib}{how}
\renewcommand{\PrintDOI}[1]{%
  \href{http://dx.doi.org/#1}{{\tt DOI:#1}}%
}
\renewcommand{\eprint}[1]{#1}
\BibSpec{book}{%
    +{}  {\PrintPrimary}                {transition}
    +{.} { \PrintDate}                  {date}
    +{.} { \textit}                     {title}
    +{.} { }                            {part}
    +{:} { \textit}                     {subtitle}
    +{,} { \PrintEdition}               {edition}
    +{}  { \PrintEditorsB}              {editor}
    +{,} { \PrintTranslatorsC}          {translator}
    +{,} { \PrintContributions}         {contribution}
    +{,} { }                            {series}
    +{,} { \voltext}                    {volume}
    +{,} { }                            {publisher}
    +{,} { }                            {organization}
    +{,} { }                            {address}
    +{,} { }                            {status}
    +{,} { \PrintDOI}                   {doi}
    +{,} { \PrintISBNs}                 {isbn}
    +{}  { \parenthesize}               {language}
    +{}  { \PrintTranslation}           {translation}
    +{;} { \PrintReprint}               {reprint}
    +{.} { }                            {note}
    +{.} {}                             {transition}
    +{}  {\SentenceSpace \PrintReviews} {review}
}
\BibSpec{article}{%
    +{}  {\PrintAuthors}                {author}
    +{,} { \textit}                     {title}
    +{.} { }                            {part}
    +{:} { \textit}                     {subtitle}
    +{,} { \PrintContributions}         {contribution}
    +{.} { \PrintPartials}              {partial}
    +{,} { }                            {journal}
    +{}  { \textbf}                     {volume}
    +{}  { \PrintDatePV}                {date}
    +{,} { \issuetext}                  {number}
    +{,} { \eprintpages}                {pages}
    +{,} { }                            {status}
    +{,} { \PrintDOI}                   {doi}
    +{,} { \eprint}        {eprint}
    +{}  { \parenthesize}               {language}
    +{}  { \PrintTranslation}           {translation}
    +{;} { \PrintReprint}               {reprint}
    +{.} { }                            {note}
    +{.} {}                             {transition}
    +{}  {\SentenceSpace \PrintReviews} {review}
}
\BibSpec{collection.article}{%
    +{}  {\PrintAuthors}                {author}
    +{,} { \textit}                     {title}
    +{.} { }                            {part}
    +{:} { \textit}                     {subtitle}
    +{,} { \PrintContributions}         {contribution}
    +{,} { \PrintConference}            {conference}
    +{}  {\PrintBook}                   {book}
    +{,} { }                            {booktitle}
    +{,} { \PrintDateB}                 {date}
    +{,} { pp.~}                        {pages}
    +{,} { }                            {publisher}
    +{,} { }                            {organization}
    +{,} { }                            {address}
    +{,} { }                            {status}
    +{,} { \PrintDOI}                   {doi}
    +{,} { \eprint}        {eprint}
    +{}  { \parenthesize}               {language}
    +{}  { \PrintTranslation}           {translation}
    +{;} { \PrintReprint}               {reprint}
    +{.} { }                            {note}
    +{.} {}                             {transition}
    +{}  {\SentenceSpace \PrintReviews} {review}
}
\BibSpec{misc}{%
  +{}{\PrintAuthors}  {author}
  +{,}{ \textit}      {title}
  +{.}{ }             {how}
  +{}{ \parenthesize} {date}
  +{,} { available at \eprint}        {eprint}
  +{,}{ available at \url}{url}
  +{,}{ }             {note}
  +{.}{}              {transition}
}


\numberwithin{equation}{section}
\counterwithin{figure}{section}
\counterwithin{table}{section}

\newtheorem{theorem}{Theorem}[section]
\newtheorem{corollary}[theorem]{Corollary}
\newtheorem{lemma}[theorem]{Lemma}
\newtheorem{proposition}[theorem]{Proposition}
\newtheorem{conjecture}[theorem]{Conjecture}
\newtheorem{step}{Step}
\makeatletter
\@addtoreset{step}{theorem}
\makeatother

\theoremstyle{remark}
\newtheorem{remark}[theorem]{Remark}

\newtheorem{example}[theorem]{Example}

\theoremstyle{definition}

\mathchardef\mhyph="2D        
 
\newcommand{\Z}{\mathbb{Z}}

\newcommand{\bT}{\mathbb{T}}
\newcommand{\bC}{\mathbb{C}}
\newcommand{\bF}{\mathbb{F}}

\newcommand{\bP}{\mathbb{P}}
\newcommand{\cC}{\mathcal{C}}

\newcommand{\cO}{\mathcal{O}}

\newcommand{\rH}{\mathrm{H}}
\newcommand{\rK}{\mathrm{K}}
\newcommand{\rL}{\mathrm{L}}
\newcommand{\rO}{\mathrm{O}}
\newcommand{\rS}{\mathrm{S}}
\newcommand{\rV}{\mathrm{V}}

\newcommand{\SO}{\mathrm{SO}}
\newcommand{\PO}{\mathrm{PO}}
\newcommand{\SU}{\mathrm{SU}}
\newcommand{\Spin}{\mathrm{Spin}}
\newcommand{\Pin}{\mathrm{Pin}}
\newcommand{\Sp}{\mathrm{Sp}}
\newcommand{\PSp}{\mathrm{PSp}}
\newcommand{\VO}{\mathrm{VO}}
\newcommand{\PG}{\mathrm{PG}}
\newcommand{\id}{\mathrm{id}}
\newcommand{\Hilbf}{\mathrm{Hilb}_\mathrm{f}}
\newcommand{\medwedge}{{\textstyle\bigwedge}}


\DeclareMathOperator{\End}{End}
\DeclareMathOperator{\Hom}{Hom}

\DeclareMathOperator{\tr}{tr}

\DeclareMathOperator{\Rep}{Rep}
\DeclareMathOperator{\Aut}{Aut}
\DeclareMathOperator{\Modcat}{Mod}
\DeclareMathOperator{\Sym}{Sym}


\newcommand{\absv}[1]{\left|#1\right|}
\newcommand{\ang}[1]{\left\langle#1\right\rangle}
\newcommand{\rmodc}[1]{\mathrm{mod}\mhyph#1}
\newcommand{\lmodc}[1]{#1\mhyph\mathrm{mod}}
\newcommand{\bmodc}[1]{#1\mhyph\mathrm{mod}\mhyph#1}

\begin{document}
\date{July 28, 2025}
\author{Simon Schmidt}
\address{Simon Schmidt, Ruhr University Bochum}
\email{s.schmidt@rub.de}
\author{Makoto Yamashita}
\address{Makoto Yamashita, University of Oslo}
\email{makotoy@math.uio.no}
\thanks{M.Y.: this research was funded, in part, by The Research Council of Norway [projects 300837 and 324944].
S. Sch has received funding from the European Union’s Horizon 2020 research and innovation programme under the Marie Sklodowska-Curie grant agreement No. 101030346. He acknowledges support by
the Deutsche Forschungsgemeinschaft (DFG, German Research Foundation) under Germany’s Excellence Strategy - EXC 2092 CASA - 39078197}

\title{Quantum symmetry of $3$-transitive graphs}

\begin{abstract}
We study the quantum automorphism group of $3$-transitive graphs in this article. Those are highly symmetric graphs that were classified by Cameron and Macpherson in 1985, and we compute the quantum automorphism group of all such graphs, excluding the orthogonal graphs $\rO^-(6,q)$ for $q>3$.
We show that there is no quantum symmetry for the McLaughlin graph and the orthogonal graphs $\rO^-(6,q)$ with $q = 2, 3$, while that the quantum automorphism group of the affine polar graphs $\VO^{+}(2k,2)$ and $\VO^{-}(2k,2)$ are monoidally equivalent to $\PO(n)$ and $\PSp(n)$, respectively.
We use planar algebras to obtain our results, where the $3$-transitivity of the graphs gives bounds on the dimensions of the $2$-- and $3$-box spaces of the associated planar algebras. 
\end{abstract}

\maketitle

\section{Introduction}
Quantum automorphism groups of finite graphs were introduced by Banica~\cite{MR2146039} and Bichon~\cite{MR1937403}. They are generalizations of the classical automorphism group of a graph in the framework of Woronowicz's compact matrix quantum groups~\cite{MR901157}. Quantum automorphism groups are examples of quantum subgroups of the quantum permutation group $S_n^+$ which was introduced by Wang~\cite{MR1637425}. It is a natural question to compute the quantum automorphism group of certain graphs. In~\cite{MR2335703}, Banica and Bichon determined those for small vertex transitive graphs. More recently, the first author determined the quantum automorphism groups of several families of graphs~\cite{schmidt-thesis}. An important incredient for computing those quantum groups were classical symmetries of the graphs involved. In this article, the classical symmetries also play an important role which is why we restrict to $3$-transitive graphs. 

A graph is called $3$-transitive (or $3$-homogeneous) if any isomorphism between subgraphs of size $3$ is the restriction of an automorphism of the graph. We will make use of the symmetries of the graphs for the computation of their quantum automorphism groups. Cameron and Macpherson classified those graphs in~\cite{MR805453} by classifying rank $3$ permutation groups with rank $3$ subconstituents. For example, the Schläfli graph and the McLaughlin graph are $3$-transitive. 

Planar algebras were first introduced by Jones~\cite{arXiv:math/9909027}. Already in~\cite{MR2146039}, Banica associated a planar subalgebra of the spin planar algebra to the quantum automorphism groups a graph. We focus on $3$-transitive graphs as the $3$-transitivity yields that the planar algebra associated to the graph is singly generated and also bounds the dimension of the $3$-box space by $15$. 

Liu defined and classified singly generated Yang-Baxter planar algebras in~\cite{arXiv:1507.06030}. For those planar algebras the dimension of $3$-box space is also bounded by $15$. Edge~\cite{arXiv:1902.08984} showed which $3$-point regular graphs yield Yang-Baxter planar algebras. A graph is $3$-point regular if the number of common neighbors of three vertices in the graph only depends on the adjacencies between those vertices. Note that $3$-transitive graphs are especially $3$-point regular. Edge furthermore asked for the planar algebras associated to the Schläfli graph and the McLaughlin graph as the relations in the planar algebra are close to those of a Yang-Baxter planar algebra. We will show that both of those graphs have no quantum symmetry, i.e., their quantum automorphism group coincides with their classical automorphism group. 

In our main theorem (Theorem~\ref{thm:main-thm}) we determine the quantum symmetries of all $3$-transitive graphs, except for the graphs $\rO^-(6, q)$ with $q>3$. In the case of graphs which have quantum symmetry, we furthermore obtain monoidal equivalences of the quantum automorphism groups and other known quantum groups. More precisely, we show that the graphs graphs $\rO^-(6, q)$ for $q=2,3$ and the McLaughlin graph do not have quantum symmetry. For the quantum automorphism group of the affine polar graphs $\VO^{+}(2k,2)$ and $\VO^{-}(2k,2)$, we prove that they are monoidally equivalent to $\PO(n)$ and $\PSp(n)$, respectively. The other cases of $3$-transitive graphs were already settled in the literature~\cite{MR1403861},~\cite{MR1469634},~\cite{MR2096666},~\cite{arXiv:2106.08787}. We conjecture that the graphs $\rO^-(6, q)$ with $q>3$ do not have quantum symmetry. 

\paragraph{Acknowledgements}
We thank Alexander Mang and Sergey Neshveyev for fruitful discussions.

\section{Preliminaries}

\subsection{Hopf-algebraic presentation}

Let $X$ be a finite simple graph (unoriented graph without parallel edges and loops) on $N$ vertices.
We denote its adjacency matrix by $A = A_X \in M_N(\{0, 1\})$.

Let $\cO(\rS_N^+)$ be the Hopf $*$-algebra of the free permutation group $\rS_N^+$: it is a universal algebra generated by projections $u_{i j}$ for $0 \le i, j < N$ such that the matrix $u = (u_{i j})_{i, j} \in M_N(\cO(\rS_N^+))$ is a unitary element and projections in the same row or column are orthogonal.
The quantum automorphism group of $X$, which we denote by $\Aut^+(X)$, is the compact quantum group for which the associated Hopf $*$-algebra $\cO(\Aut^+(X))$ is the quotient of $\cO(\rS_N^+)$ with an extra relation $A u = u A$~\cite{MR2335703}.

Let $E = E_X$ be the set of edges of $X$, identified with a symmetric subset of the Cartesian square of the set of vertices.
The relations satisfied by the generators $u_{i j} \in \cO(\Aut^+(X))$ can be equivalently presented as
\begin{align}
&u_{ij} =u_{ij}^*,\, u_{ij}u_{ik}=\delta_{jk}u_{ij},\, u_{ji}u_{ki}=\delta_{jk}u_{ji}, \label{rel1}\\
&\sum_{k}u_{ik} =\sum_{k}u_{ki}=1,\label{rel2}\\
&u_{ij}u_{kl} =0 \text{ if } (i,k)\in E \text{ and } (j,l)\notin E \text{ or vice versa.}\label{rel3}
\end{align}

Let $Q$ denote the algebra of complex functions on the finite set $\{i \in \Z \mid 0 \le i < N \}$.
Together with the state $\phi \colon Q \to \bC$ corresponding to the uniform probability measure, $Q$ represents a C$^*$-Frobenius algebra object in $\Rep \Aut^+(X)$.
As $\Aut^+(X)$ is a quantum group of Kac type, the categorical dimension of $Q$ is the same as the dimension of underlying space, i.e., $\dim_q(Q) = N$.
In particular, $(Q, \phi)$ is a Q-system.

\subsection{Categorical presentation}\label{sec:cat-pres}

Let us present the corresponding structures in the theory of planar algebras~\cite{arXiv:math/9909027}, following the convention of~\cite{MR3922286}.
Given a positive integer $N$, we start with the \emph{spin planar algebra} $P$, which is a shaded planar algebra with $\dim P_{k, \pm} = N^k$ for $k > 0$, $\dim P_{0,-} = N$, and $\dim P_{0,+} = 1$, whose basis are given by the diagrammatic elements as in Figure~\ref{fig:basis-spin-pl-alg}, with labeles $0 \le x_i < N$ and $0 \le x < N$.
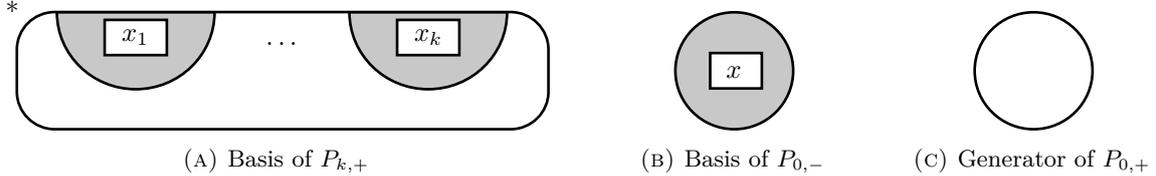
\begin{figure}[h]
\begin{subfigure}{.5\linewidth}
\centering

\definecolor{cc8c8c8}{RGB}{200,200,200}

\def \globalscale {1.000000}
\begin{tikzpicture}[y=1pt, x=1pt, yscale=\globalscale,xscale=\globalscale, every node/.append style={scale=\globalscale}, inner sep=0pt, outer sep=0pt]
  \path[draw=black,line cap=,line width=1.0pt] (14.674, 69.598) -- (185.326, 
  69.598){[rotate=-45.0] arc(225.0:315.0:14.173 and -14.173)} -- (199.499, 
  39.575){[rotate=-135.0] arc(225.0:315.0:14.173 and -14.173)} -- (14.674, 
  25.402){[rotate=-45.0] arc(45.0:135.0:14.173 and -14.173)} -- (0.5, 
  55.425){[rotate=-135.0] arc(45.0:135.0:14.173 and -14.173)} -- cycle;

  \path[draw=black,fill=cc8c8c8,line cap=,line width=1.0pt] (74.512, 69.512).. 
  controls (74.512, 53.482) and (61.299, 40.488) .. (45.0, 40.488).. controls 
  (28.701, 40.488) and (15.488, 53.482) .. (15.488, 69.512).. controls (35.163, 
  69.512) and (54.837, 69.512) .. (74.512, 69.512) -- cycle;

  \path[draw=black,fill=cc8c8c8,line cap=,line width=1.0pt] (184.012, 69.512).. 
  controls (184.012, 53.482) and (170.799, 40.488) .. (154.5, 40.488).. controls
   (138.201, 40.488) and (124.988, 53.482) .. (124.988, 69.512).. controls 
  (144.663, 69.512) and (164.337, 69.512) .. (184.012, 69.512) -- cycle;

  \path[draw=black,fill=white,line cap=,line width=1.0pt] (33.359, 66.641) 
  rectangle (56.641, 53.359);

  \path[draw=black,fill=white,line cap=,line width=1.0pt] (142.859, 66.641) 
  rectangle (166.141, 53.359);

  \node[line cap=,line width=1.0pt,anchor=south west] (text3) at (39.79, 
  57.153){$x_1$};

  \node[line cap=,line width=1.0pt,anchor=south west] (text4) at (149.29, 
  57.153){$x_k$};

  \node[line cap=,line width=1.0pt,anchor=south west] (text5) at (93.79, 
  57.153){$\dots$};

  \node[line cap=,line width=1.0pt,anchor=south west] (text6) at (-3.71, 
  69.152){$*$};

\end{tikzpicture}
\caption{Basis of $P_{k,+}$}\label{fig:basis-p-k-plus}
\end{subfigure}
\begin{subfigure}{.24\linewidth}
\centering
\definecolor{cc8c8c8}{RGB}{200,200,200}

\def \globalscale {1.000000}
\begin{tikzpicture}[y=1pt, x=1pt, yscale=\globalscale,xscale=\globalscale, every node/.append style={scale=\globalscale}, inner sep=0pt, outer sep=0pt]
  \path[draw=black,fill=cc8c8c8,line cap=,line width=1.0pt] (246.859, 47.0) 
  ellipse (22.059pt and 22.059pt);

  \path[draw=black,fill=white,line cap=,line width=1.0pt] (237.828, 53.672) 
  rectangle (257.172, 40.328);

  \node[line cap=,line width=1.0pt,anchor=south west] (text7) at (243.79, 
  44.152){$x$};

\end{tikzpicture}
\caption{Basis of $P_{0,-}$}
\end{subfigure}
\begin{subfigure}{.24\linewidth}
\centering
\def \globalscale {1.000000}
\begin{tikzpicture}[y=1pt, x=1pt, yscale=\globalscale,xscale=\globalscale, every node/.append style={scale=\globalscale}, inner sep=0pt, outer sep=0pt]
  \path[draw=black,line cap=,line width=1.0pt] (306.859, 47.0) ellipse (22.059pt
   and 22.059pt);

\end{tikzpicture}
\caption{Generator of $P_{0,+}$}
\end{subfigure}
\caption{Basis of the spin planar algebra}\label{fig:basis-spin-pl-alg}
\end{figure}

These generators satisfy the relation as in Figure~\ref{fig:spin-pl-alg-rels}, so that the loop parameter for this shaded planar algebra takes value $\sqrt{N}$.
\begin{figure}[h]
\begin{subfigure}{.3\linewidth}
\centering

\definecolor{cc8c8c8}{RGB}{200,200,200}

\def \globalscale {1.000000}
\begin{tikzpicture}[y=1pt, x=1pt, yscale=\globalscale,xscale=\globalscale, every node/.append style={scale=\globalscale}, inner sep=0pt, outer sep=0pt]
  \path[draw=black,fill=cc8c8c8,line cap=,line width=1.0pt] (247.0, 107.0) 
  circle (17.647pt);

  \path[draw=black,fill=white,line cap=,line width=1.0pt] (237.828, 113.672) 
  rectangle (257.172, 100.328);

  \node[line cap=,line width=1.0pt,anchor=south west] (text8) at (243.79, 
  104.152){$x$};

  \path[draw=black,dashed,line cap=,line width=0.978pt] (225.489, 129.511) rectangle 
  (269.511, 85.489);

  \path[draw=black,dashed,line cap=,line width=1.0pt] (304.717, 117.283) rectangle 
  (324.283, 97.717);

  \node[line cap=,line width=1.0pt,anchor=south west] (text10) at (276.79, 
  99.652){$=\frac{1}{\sqrt{N}}$};

\end{tikzpicture}

\end{subfigure}
\begin{subfigure}{.3\linewidth}
\centering

\definecolor{cc8c8c8}{RGB}{200,200,200}

\def \globalscale {1.000000}
\begin{tikzpicture}[y=1pt, x=1pt, yscale=\globalscale,xscale=\globalscale, every node/.append style={scale=\globalscale}, inner sep=0pt, outer sep=0pt]
  \path[draw=black,dashed,fill=cc8c8c8,line cap=,line width=0.978pt] (225.489, 189.511)
   rectangle (269.511, 145.489);

  \path[draw=black,fill=white,line cap=,line width=1.0pt] (247.0, 167.0) circle 
  (17.647pt);

  \path[draw=black,dashed,fill=cc8c8c8,line cap=,line width=1.0pt] (304.717, 177.283) 
  rectangle (324.283, 157.717);

  \node[line cap=,line width=1.0pt,anchor=south west] (text13) at (275.29, 
  162.652){$=\sqrt{N}$};

\end{tikzpicture}

\end{subfigure}
\begin{subfigure}{.3\linewidth}
\centering
\definecolor{cc8c8c8}{RGB}{200,200,200}

\def \globalscale {1.000000}
\begin{tikzpicture}[y=1pt, x=1pt, yscale=\globalscale,xscale=\globalscale, every node/.append style={scale=\globalscale}, inner sep=0pt, outer sep=0pt]
  \path[draw=black,fill=cc8c8c8,line cap=,line width=1.0pt] (246.859, 237.559) 
  ellipse (22.059pt and 22.059pt);

  \path[draw=black,fill=white,line cap=,line width=1.0pt] (237.828, 244.231) 
  rectangle (257.172, 230.887);

  \node[line cap=,line width=1.0pt,anchor=south west] (text14) at (243.79, 
  234.711){$x$};

  \path[draw=black,fill=cc8c8c8,line cap=,line width=1.0pt] (306.859, 237.559) 
  ellipse (22.059pt and 22.059pt);

  \node[line cap=,line width=1.0pt,anchor=south west] (text15) at (272.29, 
  234.711){$=$};

  \node[line cap=,line width=1.0pt,anchor=south west] (text16) at (207.79, 
  224.386){$\displaystyle\sum_x$};

\end{tikzpicture}
\end{subfigure}

\vspace{20pt}
\begin{subfigure}{.45\linewidth}
\centering

\definecolor{cc8c8c8}{RGB}{200,200,200}

\def \globalscale {1.000000}
\begin{tikzpicture}[y=1pt, x=1pt, yscale=\globalscale,xscale=\globalscale, every node/.append style={scale=\globalscale}, inner sep=0pt, outer sep=0pt]
  \path[draw=black,line cap=,line width=0.996pt,shift={(19.5, -0.0)}] (34.171, 
  159.502) -- (94.829, 159.502){[rotate=-45.0] arc(225.0:315.0:14.173 and 
  -14.173)} -- (109.002, 104.171){[rotate=-135.0] arc(225.0:315.0:14.173 and 
  -14.173)} -- (34.171, 89.998){[rotate=-45.0] arc(45.0:135.0:14.173 and 
  -14.173)} -- (19.998, 145.329){[rotate=-135.0] arc(45.0:135.0:14.173 and 
  -14.173)} -- cycle;

  \path[draw=black,fill=cc8c8c8,line cap=,line width=1.0pt] (113.512, 159.512)..
   controls (113.512, 143.483) and (100.299, 130.488) .. (84.0, 130.488).. 
  controls (67.701, 130.488) and (54.488, 143.483) .. (54.488, 159.512).. 
  controls (74.163, 159.512) and (93.837, 159.512) .. (113.512, 159.512) -- 
  cycle;

  \path[draw=black,fill=cc8c8c8,line cap=,line width=1.0pt] (113.512, 90.0).. 
  controls (113.512, 106.03) and (100.299, 119.024) .. (84.0, 119.024).. 
  controls (67.701, 119.024) and (54.488, 106.03) .. (54.488, 90.0).. controls 
  (74.163, 90.0) and (93.837, 90.0) .. (113.512, 90.0) -- cycle;

  \path[draw=black,fill=white,line cap=,line width=1.0pt] (74.328, 107.344) 
  rectangle (93.672, 94.0);

  \node[line cap=,line width=1.0pt,anchor=south west] (text21) at (80.29, 
  97.825){$x$};

  \path[draw=black,fill=white,line cap=,line width=1.0pt] (74.328, 155.844) 
  rectangle (93.672, 142.5);

  \node[line cap=,line width=1.0pt,anchor=south west] (text22) at (80.29, 
  146.325){$x$};

  \node[line cap=,line width=1.0pt,anchor=south west] (text23) at (2.29, 
  110.386){$\displaystyle\sum_x\sqrt{N}$};

  \node[line cap=,line width=1.0pt,anchor=south west] (text24) at (135.79, 
  120.711){$=$};

  \node[line cap=,line width=1.0pt,anchor=south west] (text9) at (35.29, 
  159.152){$*$};

  \path[draw=black,line cap=,line width=1pt,cm={ 
  0.663,-0.0,-0.0,0.656,(65.742, 42.341)}] (148.101, 159.572) -- (185.399, 
  159.572){[rotate=-45.0] arc(225.0:315.0:14.173 and -14.173)} -- (199.572, 
  104.102){[rotate=-135.0] arc(225.0:315.0:14.173 and -14.173)} -- (148.101, 
  89.928){[rotate=-45.0] arc(45.0:135.0:14.173 and -14.173)} -- (133.928, 
  145.399){[rotate=-135.0] arc(45.0:135.0:14.173 and -14.173)} -- cycle;

  \path[draw=black,fill=cc8c8c8,line cap=,line width=1.0pt] (167.47, 147.171) 
  rectangle (185.032, 101.5);

  \node[line cap=,line width=1.0pt,anchor=south west] (text34) at (147.79, 
  145.653){$*$};

\end{tikzpicture}
\end{subfigure}
\begin{subfigure}{.45\linewidth}
\centering

\definecolor{cc8c8c8}{RGB}{200,200,200}

\def \globalscale {1.000000}
\begin{tikzpicture}[y=1pt, x=1pt, yscale=\globalscale,xscale=\globalscale, every node/.append style={scale=\globalscale}, inner sep=0pt, outer sep=0pt]
  \path[draw=black,dashed,fill=cc8c8c8,line cap=,line width=1.0pt] (36.858, 259.617).. 
  controls (24.676, 259.617) and (14.801, 249.741) .. (14.801, 237.558).. 
  controls (14.801, 225.375) and (24.676, 215.501) .. (36.858, 215.501) -- 
  (57.858, 215.501).. controls (70.041, 215.501) and (79.917, 225.375) .. 
  (79.917, 237.558).. controls (79.917, 249.741) and (70.041, 259.617) .. 
  (57.858, 259.617) -- cycle;

  \path[draw=black,fill=white,line cap=,line width=1.0pt] (24.828, 244.231) 
  rectangle (44.172, 230.887);

  \node[line cap=,line width=1.0pt,anchor=south west] (text26) at (30.79, 
  234.711){$x$};

  \path[draw=black,dashed,fill=cc8c8c8,line cap=,line width=1.0pt] (139.559, 237.559) 
  ellipse (22.059pt and 22.059pt);

  \path[draw=black,fill=white,line cap=,line width=1.0pt] (130.0, 244.231) 
  rectangle (149.344, 230.887);

  \node[line cap=,line width=1.0pt,anchor=south west] (text28) at (135.963, 
  234.711){$x$};

  \path[draw=black,fill=white,line cap=,line width=1.0pt] (51.828, 244.231) 
  rectangle (71.172, 230.887);

  \node[line cap=,line width=1.0pt,anchor=south west] (text29) at (57.79, 
  233.211){$y$};

  \node[line cap=,line width=1.0pt,anchor=south west] (text27) at (86.29, 
  234.711){$=\delta_{x,y}$};

\end{tikzpicture}

\end{subfigure}

\caption{Relations in the spin planar algebra}\label{fig:spin-pl-alg-rels}
\end{figure}
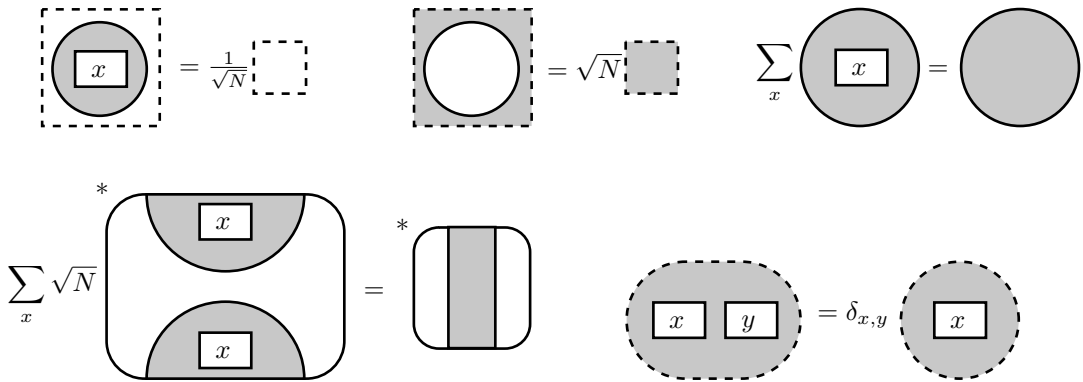

We can represent matrices $S \in M_N(\bC)$ by an element of $P_{2,+}$ as in Figure~\ref{fig:mat-diag-pres}, so that the matrix product agrees with the composition product in the planar algebra.
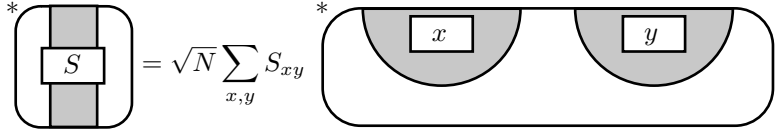
\begin{figure}[h]

\definecolor{cc8c8c8}{RGB}{200,200,200}

\def \globalscale {1.000000}
\begin{tikzpicture}[y=1pt, x=1pt, yscale=\globalscale,xscale=\globalscale, every node/.append style={scale=\globalscale}, inner sep=0pt, outer sep=0pt]
  \path[draw=black,line cap=,line width=1pt,cm={ 
  0.663,-0.0,-0.0,0.656,(-73.758, 300.341)}] (148.101, 159.572) -- (185.399, 
  159.572){[rotate=-45.0] arc(225.0:315.0:14.173 and -14.173)} -- (199.572, 
  104.102){[rotate=-135.0] arc(225.0:315.0:14.173 and -14.173)} -- (148.101, 
  89.928){[rotate=-45.0] arc(45.0:135.0:14.173 and -14.173)} -- (133.928, 
  145.399){[rotate=-135.0] arc(45.0:135.0:14.173 and -14.173)} -- cycle;

  \path[draw=black,fill=cc8c8c8,line cap=,line width=1.0pt] (27.97, 405.171) 
  rectangle (45.532, 359.5);

  \node[line cap=,line width=1.0pt,anchor=south west] (text11) at (11.29, 
  402.153){$*$};

  \path[draw=black,line cap=,line width=1.0pt,shift={(99.0, 79.5)}] (45.134, 
  324.736) -- (185.366, 324.736){[rotate=-45.0] arc(225.0:315.0:14.173 and 
  -14.173)} -- (199.539, 294.634){[rotate=-135.0] arc(225.0:315.0:14.173 and 
  -14.173)} -- (45.134, 280.461){[rotate=-45.0] arc(45.0:135.0:14.173 and 
  -14.173)} -- (30.961, 310.563){[rotate=-135.0] arc(45.0:135.0:14.173 and 
  -14.173)} -- cycle;

  \path[draw=black,fill=cc8c8c8,line cap=,line width=1.0pt] (204.012, 404.111)..
   controls (204.012, 388.081) and (190.799, 375.086) .. (174.5, 375.086).. 
  controls (158.201, 375.086) and (144.988, 388.081) .. (144.988, 404.111).. 
  controls (164.663, 404.111) and (184.337, 404.111) .. (204.012, 404.111) -- 
  cycle;

  \path[draw=black,fill=cc8c8c8,line cap=,line width=1.0pt] (283.512, 404.111)..
   controls (283.512, 388.081) and (270.299, 375.086) .. (254.0, 375.086).. 
  controls (237.701, 375.086) and (224.488, 388.081) .. (224.488, 404.111).. 
  controls (244.163, 404.111) and (263.837, 404.111) .. (283.512, 404.111) -- 
  cycle;

  \path[draw=black,fill=white,line cap=,line width=1.0pt] (162.859, 401.239) 
  rectangle (186.141, 387.957);

  \path[draw=black,fill=white,line cap=,line width=1.0pt] (242.359, 401.239) 
  rectangle (265.641, 387.957);

  \node[line cap=,line width=1.0pt,anchor=south west] (text17) at (170.79, 
  391.751){$x$};

  \node[line cap=,line width=1.0pt,anchor=south west] (text18) at (250.29, 
  390.251){$y$};

  \node[line cap=,line width=1.0pt,anchor=south west] (text19) at (127.29, 
  402.153){$*$};

  \node[line cap=,line width=1.0pt,anchor=south west] (text20) at (62.29, 
  366.886){$=\sqrt{N}\displaystyle\sum_{x, y}S_{x y}$};

  \path[draw=black,fill=white,line cap=,line width=1.0pt] (24.859, 389.239) 
  rectangle (48.141, 375.957);

  \node[line cap=,line width=1.0pt,anchor=south west] (text25) at (32.79, 
  379.751){$S$};

\end{tikzpicture}
\caption{Diagrammatic presentation of matrices}\label{fig:mat-diag-pres}
\end{figure}

Next, given a graph $X$ with $N$ vertices, and out of the Q-system $(Q, \phi)$ in $\Rep \Aut^+(X)$ as before, we get a \emph{general planar algebra} $(P^{X,g}_k)_{k=0}^\infty$ in the sense of~\cite{arXiv:math/9909027} by setting
\begin{align*}P^{X,g}_{2k+1} &= 0,&
P^{X,g}_{2 k} &= \Hom_{\Aut^+(X)}(\bC, Q^{\otimes k}).
\end{align*}
Following~\cite{MR2146039} (with an appropriate fix of its normalization errors), we can realize $P^{X, g}$ as the planar subalgebra of the spin planar algebra (note that the degree convention gives $P^{X, g}_{2 k} \subset P_{k,+}$) by the element of Figure~\ref{fig:adj-mat-elem}, which is the element corresponding to the adjacency matrix of $X$ up to the scaling by $\sqrt{N}$.
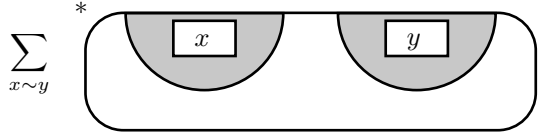
\begin{figure}[h]

\definecolor{cc8c8c8}{RGB}{200,200,200}

\def \globalscale {1.000000}
\begin{tikzpicture}[y=1pt, x=1pt, yscale=\globalscale,xscale=\globalscale, every node/.append style={scale=\globalscale}, inner sep=0pt, outer sep=0pt]
  \path[draw=black,line cap=,line width=1.0pt] (45.134, 324.736) -- (185.366, 
  324.736){[rotate=-45.0] arc(225.0:315.0:14.173 and -14.173)} -- (199.539, 
  294.634){[rotate=-135.0] arc(225.0:315.0:14.173 and -14.173)} -- (45.134, 
  280.461){[rotate=-45.0] arc(45.0:135.0:14.173 and -14.173)} -- (30.961, 
  310.563){[rotate=-135.0] arc(45.0:135.0:14.173 and -14.173)} -- cycle;

  \path[draw=black,fill=cc8c8c8,line cap=,line width=1.0pt] (105.012, 324.611)..
   controls (105.012, 308.581) and (91.799, 295.586) .. (75.5, 295.586).. 
  controls (59.201, 295.586) and (45.988, 308.581) .. (45.988, 324.611).. 
  controls (65.663, 324.611) and (85.337, 324.611) .. (105.012, 324.611) -- 
  cycle;

  \path[draw=black,fill=cc8c8c8,line cap=,line width=1.0pt] (184.512, 324.611)..
   controls (184.512, 308.581) and (171.299, 295.586) .. (155.0, 295.586).. 
  controls (138.701, 295.586) and (125.488, 308.581) .. (125.488, 324.611).. 
  controls (145.163, 324.611) and (164.837, 324.611) .. (184.512, 324.611) -- 
  cycle;

  \path[draw=black,fill=white,line cap=,line width=1.0pt] (63.859, 321.739) 
  rectangle (87.141, 308.457);

  \path[draw=black,fill=white,line cap=,line width=1.0pt] (143.359, 321.739) 
  rectangle (166.641, 308.457);

  \node[line cap=,line width=1.0pt,anchor=south west] (text32) at (71.79, 
  312.251){$x$};

  \node[line cap=,line width=1.0pt,anchor=south west] (text33) at (151.29, 
  310.751){$y$};

  \node[line cap=,line width=1.0pt,anchor=south west] (text35) at (26.79, 
  324.251){$*$};

  \node[line cap=,line width=1.0pt,anchor=south west] (text36) at (2.29, 
  293.386){$\displaystyle\sum_{x \sim y}$};

\end{tikzpicture}
\caption{Generator of $P^X$}\label{fig:adj-mat-elem}
\end{figure}
We write the associated shaded planar algebra as $P^X_{k,\pm}$.

\begin{proposition}\label{prop:P-Q-u-gen-t-d}
The C$^*$-tensor category $\Rep \Aut^+(X)$ is generated by the unit morphism $v\colon \bC \to Q$, the product morphism $m\colon Q \otimes Q \to Q$, and the morphism $d \colon Q \to Q$.
\end{proposition}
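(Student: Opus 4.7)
The plan is to combine Woronowicz's Tannaka--Krein reconstruction with the planar algebra description of Section~\ref{sec:cat-pres}. First I would observe that $Q$ tensor-generates $\Rep \Aut^+(X)$: every irreducible representation appears as a subobject of some $Q^{\otimes k}$, since this is already the case for $\Rep \rS_N^+$ and $\Aut^+(X) \subset \rS_N^+$. Thus the task reduces to showing that the morphism spaces $\Hom_{\Aut^+(X)}(Q^{\otimes k}, Q^{\otimes l})$ can all be obtained from $v$, $m$, $d$ (together with their adjoints, which are automatic in a C$^*$-tensor category---note in particular that $d = d^*$ because the adjacency matrix is symmetric) under composition, tensor product, and linear combinations.

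Via the Q-system on $Q$ and Frobenius reciprocity, the graded space $\bigoplus_{k,l} \Hom_{\Aut^+(X)}(Q^{\otimes k}, Q^{\otimes l})$ is identified with the planar algebra $P^X$ recalled in Section~\ref{sec:cat-pres}, and the corresponding graded space for $\rS_N^+$ with its planar subalgebra. A standard fact (going back to Banica) is that $\Rep \rS_N^+$ is the universal C$^*$-tensor category generated by a special commutative C$^*$-Frobenius algebra of quantum dimension $N$; equivalently, the $\rS_N^+$ planar subalgebra of the spin planar algebra is generated by $v, m, v^*, m^*$ alone. The description at Figure~\ref{fig:adj-mat-elem} then asserts that $P^X$ is obtained from this $\rS_N^+$ subalgebra by adjoining the single element corresponding (up to $\sqrt{N}$) to the adjacency matrix, i.e., to $d$. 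Combining these two observations yields the proposition: after closing under direct sums and subobjects, the C$^*$-tensor category generated by $v, m, d$ exhausts $\Rep \Aut^+(X)$.

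The main point to handle carefully is the dictionary between planar tangle operations and categorical operations on the Q-system: one must verify that the elementary tangles (cups, caps, trivalent vertices) used to generate the planar subalgebra are all expressible from $v, m, v^*, m^*$ and the self-duality they induce on $Q$, with the correct $\sqrt{N}$ normalization from the spin planar algebra conventions recalled in Figures~\ref{fig:spin-pl-alg-rels} and~\ref{fig:mat-diag-pres}. Once this dictionary is in place, the argument reduces to invoking the generator description of $P^X$.
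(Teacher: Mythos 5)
Your proposal is correct and takes essentially the same route as the paper: the paper likewise deduces the statement from the characterization of $P^{X,g}$ as the planar subalgebra of the spin planar algebra generated by the adjacency-matrix element, and your observation that the cups and caps are expressible from $v,m,v^*,m^*$ is exactly the paper's remark that $R=m^*v$ is a self-duality morphism for $Q$.
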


\begin{proof}
This is a consequence of the above characterization of $P^{X,g}$.
We can also directly obtain this starting from these three morphisms, by noting that $R = m^* v$ becomes a morphism of duality for $Q$ with $\bar R = R$.
\end{proof}

The C$^*$-Frobenius algebra object $Q$ in the rigid C$^*$-tensor category $\cC = \Rep \Aut^+(X)$ defines a C$^*$-$2$-category
\begin{align}\label{eq:2-cat-from-Q-sys}
\cC_{0, 0} &= \cC, &
\cC_{0, 1} &= \Modcat_\cC\mhyph Q, &
\cC_{1, 0} &= Q\mhyph\Modcat_\cC, &
\cC_{1,1} &= Q\mhyph\Modcat_\cC\mhyph Q.
\end{align}
Let $M \in \cC_{0, 1}$ denote $Q$ itself as a right $Q$-module, and $\bar M \in \cC_{1, 0}$ denote $Q$ itself as a left $Q$-module, which are dual to each other.
We thus have
\begin{align*}
P^X_{k, +} &= \End(M \otimes \bar M \otimes \dots),&
P^X_{k, -} = \End(\bar M \otimes M \otimes \dots),
\end{align*}
both with $k$ factors.

\subsection{3-transitive graphs}

We look at the \emph{3-transitive} (or \emph{3-homogeneous}) graphs, that guarantee $\dim P^X_{2,+} = 3$ and $\dim P^X_{3,+} \le 15$ (see Proposition~\ref{prop:d2d3}).
These graphs are classified by~\cite{MR805453}*{Corollary 1.2}, as follows:
\begin{enumerate}
\item the complete graphs $\rK_N$;
\item\label{it:disj-uni-Kn} disjoint union of copies of $\rK_n$ for a fixed $n$;
\item the pentagon;
\item the Higman--Sims graph;
\item the Hamming graphs $\rH(2, m)$ (AKA lattice graphs $\rL_2(m)$, rook graphs);
\item\label{it:class-orth-pol} the orthogonal polar graphs $\rO^-(6,q)$~\cite{brouwer-webpage-srg}, $q$ power of a prime; 
\item the affine polar graphs $\VO^+(2 k, 2)$~\cite{brouwer-webpage-srg} (`pseudo Latin square type');
\item the affine polar graphs $\VO^-(2 k, 2)$~\cite{brouwer-webpage-srg} (`negative Latin square type'); and
\item the McLaughlin graph.
\end{enumerate}

\begin{remark}
In~\cite{MR805453}, the series~\eqref{it:class-orth-pol} is described as the graphs `whose vertices are the maximal totally singular subspaces of the unitary space on $\PG(3,q)$'.
In the notation of~\cite{MR4350112} these are $\Delta(\mathsf{U}_4(q))$ while $\rO^-(6, q)$ are $\Gamma(\mathsf{O}^-_6(q))$, hence these are isomorphic by~\cite{MR4350112}*{Section 2.7.5}.
\end{remark}

\section{Main theorem}

\begin{theorem}\label{thm:main-thm}
Let $X$ be a $3$-transitive graph, not of the form $\rO^-(6, q)$ for $q > 3$.
Then we have one of the following.
\begin{enumerate}
\item\label{it:main-thm-no-symm-case}
There is no quantum symmetry of $X$, or equivalently, $\cO(\Aut^+(X))$ is commutative.
This happens for the pentagon, the graphs $\rO^-(6, 2)$ and $\rO^-(6, 3)$ and the McLaughlin graph.
\item\label{it:main-thm-Lie-case}
The representation category $\Rep \Aut^+(X)$ is unitarily monoidally equivalent to $\Rep G_q$ for $G = \PO(n)$ or $G = \PSp(n)$, and for some $n$ and $q > 0$.
In particular $\cO(\Aut^+(X))$ is noncommutative and infinite dimensional.
This happens for the Higman--Sims graph, the graphs $\rK_N$, $\rH(2, m)$, and $\VO^\epsilon(2 k, 2)$.
\item The graph $X$ is the disjoint union of $m$ copies of $\rK_n$ and we have $\Aut^+(X) = \rS^+_n \wr_* \rS^+_m$.
\end{enumerate}
\end{theorem}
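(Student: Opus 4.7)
The plan is to proceed by case analysis following the Cameron--Macpherson classification \cite{MR805453}. Several cases are already handled in the literature and we cite them directly: the pentagon, the complete graphs $\rK_N$, the Hamming graphs $\rH(2, m)$, the Higman--Sims graph, and the disjoint union of $m$ copies of $\rK_n$, which were treated in \cite{MR1403861, MR1469634, MR2096666, arXiv:2106.08787}. These account for most of case~\eqref{it:main-thm-Lie-case} as well as case~(3) entirely. The genuinely new work concerns the McLaughlin graph, the orthogonal polar graphs $\rO^-(6, q)$ with $q \in \{2, 3\}$, and the affine polar graph families $\VO^\pm(2k, 2)$.

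For the McLaughlin graph and $\rO^-(6, q)$ with $q \in \{2, 3\}$, the strategy is to prove that $P^X$ coincides with the invariant planar algebra of the classical automorphism group $\Aut(X)$, which by Tannakian reconstruction is equivalent to the commutativity of $\cO(\Aut^+(X))$ asserted in case~\eqref{it:main-thm-no-symm-case}. By Proposition~\ref{prop:P-Q-u-gen-t-d}, this reduces to controlling the morphism spaces $P^X_{2,+}$ and $P^X_{3,+}$, since the three generators $v$, $m$, $d$ live in degrees $\le 2$. Three-transitivity already forces $\dim P^X_{2,+} = 3$ and $\dim P^X_{3,+} \le 15$ (Proposition~\ref{prop:d2d3}). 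The remaining task is to compute $\dim P^X_{3,+}$ precisely, using the combinatorial data of each graph (strongly regular parameters, triangle counts, orbits on $3$-vertex configurations), and exhibit enough classical $\Aut(X)$-invariants to span it. I expect this to be the principal obstacle, particularly for $\rO^-(6, q)$, whose $3$-box relations require explicit linear algebra on bases indexed by vertex triples.

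For the affine polar graphs $\VO^+(2k, 2)$ and $\VO^-(2k, 2)$ the aim is instead to exhibit a monoidal equivalence with $\Rep \PO(n)_q$ and $\Rep \PSp(n)_q$, respectively. The approach is to recognize $P^X$ as a planar algebra of Brauer or Birman--Murakami--Wenzl type: the underlying $\bF_2$-vector space carries a quadratic form whose associated orthogonal or symplectic structure should produce, inside $\Rep \PO(n)_q$ (resp.\ $\Rep \PSp(n)_q$), a Q-system whose invariant planar algebra matches $P^X$. Concretely, I would read off a candidate pair $(n, q)$ from the adjacency spectrum of $\VO^\pm(2k, 2)$, construct a planar algebra morphism from the corresponding Lie-type planar algebra into $P^X$, and use the bound $\dim P^X_{3,+} \le 15$ together with the known low-box dimensions on the Lie-type side to force both surjectivity and injectivity. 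The hard part will be fixing $(n, q)$ correctly and verifying the relevant BMW/Brauer skein relations inside $P^X$ from the $\VO^\pm$-specific counting identities; once these are in place, Tannakian duality converts the planar algebra isomorphism into the claimed monoidal equivalence.
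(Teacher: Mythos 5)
Your case division and the literature citations for the pentagon, $\rK_N$, $\rH(2,m)$, the Higman--Sims graph and the disjoint unions of $\rK_n$ agree with the paper. The two genuinely new parts, however, each contain a gap.

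For the McLaughlin graph and $\rO^-(6,q)$ with $q=2,3$, the reduction you propose is not valid. Proposition~\ref{prop:P-Q-u-gen-t-d} says the category is generated by $v$, $m$, $d$, but that does not make the higher hom-spaces a function of $\dim P^X_{2,+}$ and $\dim P^X_{3,+}$; one still has to control the relations among words in the generators. Absence of quantum symmetry is equivalent to the flip $\sum_{x,y}\delta_x\otimes\delta_y\otimes\delta_x\otimes\delta_y$ lying in $P^X_{4,+}$, i.e.\ to $\dim P^X_{4,+}=\dim P^{\Aut(X)}_{4,+}$, and knowing $\dim P^X_{3,+}$ exactly (even if it equals $15$) does not decide this: Edge already observed that these graphs satisfy relations close to, but not equal to, the Yang--Baxter type relations, so Liu's classification cannot be invoked. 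The paper instead produces explicit elements of $P^X_{4,+}$ from labeled planar graphs with internal vertices of valency at least $3$ and finds, by an exact rational linear-algebra computation, a combination equal to the flip (after spanning a $128$-, resp.\ $134$-dimensional subspace). For $\rO^-(6,2)$ it also gives a purely algebraic proof manipulating the generators $u_{ij}$ with the parameters $q_0,\dots,q_3$ and strong regularity. Your plan contains neither route, and the step you flag as "the principal obstacle" is aimed at the wrong box space.

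For $\VO^{\pm}(2k,2)$, the BMW/Brauer recognition you describe is used in the paper only as a heuristic for small $k$ (to guess the parameters $n=2^k$, $q=\pm1$). To turn it into a proof you would need to verify the Yang--Baxter type relation and $\dim P^X_{3,+}=15$ exactly (the upper bound $\le 15$ does not suffice for Liu's theorem), uniformly in $k$, and you give no mechanism for doing so. The paper's actual argument is different: it exhibits a quantum isomorphism between $\VO^{\pm}(2k,2)$ and a quantum graph on $M_{2^k}(\bC)$ built from the symmetric (resp.\ antisymmetric) square of the vector representation, using a $2$-cocycle twist of $C^*(\bF_2^{2k})$, a base case $k=1$ settled by the classification of quantum graphs on $M_2(\bC)$, and a recursion relation for the adjacency operators; it then identifies the quantum automorphism group of that quantum graph with $\PO(2^k)$ or $\PSp(2^k)$. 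Without either that construction or a fully verified skein-theoretic identification, the monoidal equivalences in case~(2) for $\VO^{\pm}(2k,2)$ remain unproven.
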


We have the following conjecture for the 3-transitive graphs excluded in the theorem above. 

\begin{conjecture}
The graphs $\rO^-(6, q)$ for $q > 3$ do not have quantum symmetry.
\end{conjecture}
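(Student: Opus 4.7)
The plan is to extend the planar-algebra arguments that established Theorem~\ref{thm:main-thm}\eqref{it:main-thm-no-symm-case} in the cases $q=2,3$ uniformly to all prime powers $q>3$. By Tannaka--Krein duality and the categorical presentation of \S\ref{sec:cat-pres}, it suffices to show that the planar algebra $P^X$ associated with $X = \rO^-(6,q)$ coincides with the planar algebra of the classical automorphism group $\Aut(X)$; equivalently, that the higher box spaces of $P^X$ have the same dimensions as the spaces of $\Aut(X)$-invariants on the corresponding tensor powers of the vertex permutation representation.

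First I would use Proposition~\ref{prop:d2d3} together with the 3-transitivity of $X$ to fix $\dim P^X_{2,+} = 3$ and to obtain the a priori upper bound $\dim P^X_{3,+} \le 15$. Using the description of $\rO^-(6,q)$ as a polar space (equivalently, via the isomorphism $\Delta(\mathsf{U}_4(q)) \cong \rO^-(6,q)$ from the remark after the classification), I would then enumerate the orbits of $\Aut(X)$ on ordered triples of vertices and match the count against $\dim P^X_{3,+}$. The essential step is to exhibit an explicit spanning family of $P^X_{3,+}$ built from rotations and composition products of the generator of Figure~\ref{fig:adj-mat-elem}, and to show that its Gram matrix with respect to the Markov trace has rank equal to the number of classical orbits. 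Once $P^X_{3,+}$ is shown to be classical, the passage to higher box spaces should follow by an induction in the style of Banica~\cite{MR2146039}, using Proposition~\ref{prop:P-Q-u-gen-t-d} to see that all of $\Rep \Aut^+(X)$ is controlled by the morphisms $v$, $m$, and $d$ already available in the classical picture.

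A complementary Hopf-algebraic approach is to work directly with relations~\eqref{rel1}--\eqref{rel3} and attempt to derive the commutators $[u_{ij}, u_{k\ell}] = 0$ from the refined intersection numbers of $\rO^-(6,q)$: by 3-transitivity, the number of common neighbors and non-neighbors of a triple of vertices depends only on the isomorphism type of the induced subgraph, which yields a system of quadratic and cubic identities among the $u_{ij}$. One would then hope to propagate these identities to full commutativity via a connectedness argument on the orbital graphs of $\Aut(X)$ acting on pairs.

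The hard part will be uniformity in $q$. For $q=2,3$ the relevant spaces can be analyzed by finite explicit computation, but for general $q$ the intersection numbers are polynomial expressions in $q$ and one must rule out the existence of any $q$-dependent genuine quantum deformation. If it turns out that $\dim P^X_{3,+} = 15$ for all $q>3$, then $P^X$ sits at the boundary of Liu's singly generated Yang--Baxter planar algebras~\cite{arXiv:1507.06030}, and distinguishing the classical planar algebra from a genuine deformation will likely require inputs from the polar-space structure of $\rO^-(6,q)$ beyond those used in Edge~\cite{arXiv:1902.08984}. I expect this boundary case, rather than any single combinatorial identity, to be the genuine obstacle.
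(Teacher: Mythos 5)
This statement is stated in the paper as an open conjecture, not a theorem: the authors explicitly say that several steps of their $q=2$ argument extend to general $q$ but that they could not complete the proof. Your text is accordingly a strategy outline rather than a proof, and both strategies you sketch break down at exactly the points the paper identifies. First, controlling $\dim P^X_{3,+}$ is not enough: the paper's criterion for the absence of quantum symmetry is that the \emph{flip} element lies in $P^X_{4,+}$ (equivalently $\dim P^X_{4,+} = \dim P^{\Aut(X)}_{4,+}$), and the $4$-box space is not determined by the $3$-box space together with the generator $d$. Indeed, for the affine polar graphs $\VO^{\pm}(2k,2)$ the $3$-box space has the maximal dimension $15$ and the planar algebra is singly generated, yet these graphs \emph{do} have quantum symmetry; so the proposed ``induction in the style of Banica'' from the $3$-box level upward cannot work as stated. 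For $q=3$ the paper has to exhibit the flip by an explicit $134$-dimensional computation in $P^X_{4,+}$ using graphs $\Gamma$ with internal vertices of valency $\ge 3$; nothing in your plan produces an analogue of this uniformly in $q$.

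Second, the Hopf-algebraic route you describe is precisely the paper's proof for $q=2$ (Theorem~\ref{thm:orth-pol-graph-no-q-sym}), and its Step~\ref{st:3} uses $\lambda = q-1 = 1$ in an essential way: the conclusion $u_{ij}u_{ka}u_{ip}=0$ for $a \neq l$ rests on $l$ being the \emph{unique} common neighbour of $j$ and $p$, which fails for every $q>2$ since then $\lambda \ge 2$. The intersection numbers $q_0 = q+1$, $q_1 = 1$, $q_2 = 0$, $q_3 = q-2$ computed in the paper do give Step~\ref{st:1} for all $q$, but they do not suffice to propagate to the commutation relations, and no connectedness argument on orbital graphs is known to close this gap. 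Until you either find a $q$-uniform expression for the flip in $P^X_{4,+}$ or a replacement for the $\lambda=1$ step, the conjecture remains open and your proposal does not prove it.
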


Our conjecture is based on the fact that several steps in the proof of no quantum symmetry for $q=2$ (Theorem \ref{thm:orth-pol-graph-no-q-sym}) work for general $q$, yet we could not make the proof work for the general case. 

We will summarize now what is already known about the quantum automorphism groups of $3-$transitive graphs. The case (3) in Theorem \ref{thm:main-thm} is the only non-connected case for which the quantum automorphism group was computed by Bichon~\cite{MR2096666}.

\begin{example}
Let $X$ be the Clebsch graph, which is isomorphic to $\VO^-(4,2)$.
As it has a square number of vertices, is triangle-free, and $3$-point regular, by the argument of~\cite{MR1469634} (and~\cite{MR1403861}) 
gives that $P^X$ is isomorphic (forgetting the unitary structure) to the shaded planar algebra of the $2$-category
\begin{align*}
\cC_{0, 0} &= \Rep \PSp_q(4) = \cC_{1,1}, &
\cC_{0, 1} &= (\Rep \Sp_q(4))_1 = \cC_{1, 0},
\end{align*}
for some $q$ and the `vector representation' object $V \in \cC_{0, 1}$, where $(\Rep \Sp_q(4))_1$ is the `odd' degree component of the $\Z_2$-extension structure over $\Rep \PSp_q(4)$, i.e., we have
\begin{equation}\label{eq:rep-spin-5-decomp}
\Rep \Sp_q(4) \simeq \Rep \PSp_q(4) \oplus (\Rep \Sp_q(4))_1
\end{equation}
corresponding to the fact that $\Sp(4)$ has $\Z_2$ as its center, with $\PSp(4)$ as the quotient by that.
Under this correspondence we should have $Q \simeq V \otimes \bar V \simeq V \otimes V$, while $d(V) = [4]_q$.
(We can also say $\Sp_q(4) = \Spin_q(5)$ covering $\PO(5) = \SO(5)$, and the above decomposition becomes
\[
\Rep \Spin_q(5) \simeq \Rep \SO_q(5) \oplus (\Rep \Spin_q(5))_1,
\]
where $V$ becomes the spin representation of $\Spin_q(4)$.)
As $X$ has $16$ vertices, we obtain $q = 1$, and we get the monoidal equivalence between $\Rep \Aut^+(X)$ and $\Rep \SO(5)$ which was obtained in~\cite{MR4117054} from an isomorphism of compact quantum groups between $\Aut^+(X)$ and $\SO_{-1}(5)$, a certain twist of $\SO(5) = \PSp(4)$.

Another approach is to use the skein relation~\cite{MR1469634}*{(3)} for $d$ to get $\dim P^X_{3,+} = 14$.
Then we obtain the unitary monoidal equivalence between $\Rep \Aut^+(X)$ and $\Rep \PSp(4)$ by~\cite{MR3592517}.
\end{example}

\begin{example}
Let $X$ be the Higman--Sims graph.
Again by the same reasoning as above, either via~\cite{MR1403861} and~\cite{MR1469634}, or via~\cite{MR1469634} and~\cite{MR3592517}, we obtain a unitary monoidal equivalence between $\Rep \Aut^+(X)$ and $\Rep \PSp_q(5)$, with $q$ satisfying $[4]_q = 10$.
This makes the relation alluded in~\cite{schmidt-thesis}*{Section 6.3} more precise.

This gives an example of Kac-type compact quantum group whose representation category has property (T) in the sense of~\citelist{\cite{MR3406647},~\cite{MR3509018},~\cite{MR3447719}}.
\end{example}

\begin{example}
For the Hamming graphs, we have $\Aut^+(\rH(2,n)) \simeq \rS_n^+ \wr \rS_2$~\cite{arXiv:2106.08787}.
They give categories $\Rep \PO_q(4)$.
These are related by $\Rep \rS_n^+ \simeq \Rep \SO_q(3)$ for $q + q^{-1} = n$, $\SO_q(3) \simeq \SU_q(2) / \Z_2$, and $\Pin_q(4) = (\SU_q(2) \times \SU_q(2)) \rtimes \Z_2$ covering $\PO_q(4)$ by a $4$-to-$1$ homomorphism.
\end{example}

\begin{example}
The graph $\VO^+(4, 2)$ is the complement of the Hamming graph $\rH(2, 2)$, hence we again see the connection to $\PO_q(4)$.
\end{example}

Thus, the remaining cases in Theorem~\ref{thm:main-thm} are: the case~\eqref{it:main-thm-no-symm-case} for $\rO^-(6, q)$ for $q=2,3$ (Theorem~\ref{thm:orth-pol-graph-no-q-sym} and Theorem~\ref{thm:McL-no-q-symm}) and the McLaughlin graph (Theorem~\ref{thm:McL-no-q-symm}), and the case~\eqref{it:main-thm-Lie-case} for the graphs $\VO^\epsilon(2 k, 2)$ (Corollaries~\ref{cor:VO-plus-PO-2-k} and~\ref{cor:VO-minus-PSp-2-k}).

\subsection{Planar algebras for the affine polar graphs}

The first few cases of $\VO^\epsilon(2 k, 2)$ can be studied through their planar algebras.
Specifically, we look at the \emph{Yang--Baxter type relations} in the sense of~\cite{arXiv:1507.06030}.
Recall that a shaded planar algebra $P$ satisfies the Yang--Baxter type relations when
\begin{itemize}
\item $\dim P_{i, \pm} = 1$ for $i = 0, 1$;
\item $\dim P_{2, \pm} = 3$, with an element $S \in P_{2, +}$ linearly indepdent from the Temperley--Lieb elements;
\item $\dim P_{3, \pm} = 15$; and
\item up to a linear span of elements involving planar diagrammatic calculus with at most $2$ copies of $S$, the elements in Figure~\ref{fig:YB-elements} agree.
\end{itemize}
Note that the last condition is independent on the choice of $S$.
A remarkable result of~\cite{arXiv:1507.06030} is that, if $S$ as above generates $P$ as a shaded planar algebra, then $P$ agrees with the one from Kauffmann bracket presentations, whose paramaters can be concretely computed.

\begin{figure}[h]
\begin{subfigure}{.4\linewidth}
\centering
\definecolor{cc7c7c7}{RGB}{199,199,199}

\def \globalscale {1.000000}
\begin{tikzpicture}[y=1pt, x=1pt, yscale=\globalscale,xscale=\globalscale, every node/.append style={scale=\globalscale}, inner sep=0pt, outer sep=0pt]
\path[draw=black,fill=cc7c7c7,line width=1.0pt] (10.0, 60.0).. controls (10.0, 60.0) and (10.0, 40.0) .. (10.0, 30.0).. controls (10.0, 20.0) and (17.52, 15.0) .. (25.0, 15.0).. controls (32.48, 15.0) and (57.93, 15.11) .. (65.0, 15.0).. controls (72.07, 14.89) and (80.2, 19.54) .. (80.0, 30.0).. controls (79.8, 40.47) and (80.0, 60.0) .. (80.0, 60.0) -- (70.0, 60.0).. controls (70.0, 60.0) and (70.07, 39.57) .. (70.0, 35.0).. controls (69.93, 30.43) and (67.14, 25.0) .. (60.0, 25.0).. controls (52.86, 25.0) and (49.83, 30.19) .. (50.0, 35.0).. controls (50.17, 39.81) and (50.0, 60.0) .. (50.0, 60.0) -- (40.0, 60.0).. controls (40.0, 60.0) and (39.83, 39.81) .. (40.0, 35.0).. controls (40.17, 30.19) and (37.63, 25.0) .. (30.0, 25.0).. controls (22.37, 25.0) and (20.07, 29.95) .. (20.0, 35.0).. controls (19.93, 40.06) and (20.0, 60.0) .. (20.0, 60.0) -- cycle;

\path[draw=black,fill=white,line width=1.0pt] (5.0, 50.0) rectangle (25.0, 35.0);

\node[anchor=south west,line width=1.0pt] (text5) at (10.0, 39.0){$S$};

\path[draw=black,fill=white,line width=1.0pt] (35.0, 50.0) rectangle (55.0, 35.0);

\node[anchor=south west,line width=1.0pt] (text6) at (40.0, 39.0){$S$};

\path[draw=black,fill=white,line width=1.0pt] (65.0, 50.0) rectangle (85.0, 35.0);

\node[anchor=south west,line width=1.0pt] (text7) at (70.0, 39.0){$S$};

\node[anchor=south west,line width=1.0pt] (text8) at (-1.76, 59.16){$*$};

\path[draw=black,line width=1.0pt] (5.0, 60.0) -- (85.0, 60.0){[rotate=-45.0] arc(225.0:315.0:5.0 and -5.0)} -- (90.0, 10.0){[rotate=-135.0] arc(225.0:315.0:5.0 and -5.0)} -- (5.0, 5.0){[rotate=-45.0] arc(45.0:135.0:5.0 and -5.0)} -- (-0.0, 55.0){[rotate=-135.0] arc(45.0:135.0:5.0 and -5.0)} -- cycle;

\end{tikzpicture}

\end{subfigure}
\begin{subfigure}{.45\linewidth}
\centering

\definecolor{cc7c7c7}{RGB}{199,199,199}

\def \globalscale {1.000000}
\begin{tikzpicture}[y=1pt, x=1pt, yscale=\globalscale,xscale=\globalscale, every node/.append style={scale=\globalscale}, inner sep=0pt, outer sep=0pt]
\path[draw=black,line width=1.0pt,cm={ 1.0,-0.0,-0.0,0.94,(-0.0, 5.07)}] (5.0, 85.0) -- (135.0, 85.0){[rotate=-45.0] arc(225.0:315.0:5.0 and -5.0)} -- (140.0, 5.0){[rotate=-135.0] arc(225.0:315.0:5.0 and -5.0)} -- (5.0, -0.0){[rotate=-45.0] arc(45.0:135.0:5.0 and -5.0)} -- (-0.0, 80.0){[rotate=-135.0] arc(45.0:135.0:5.0 and -5.0)} -- cycle;

\path[draw=black,fill=cc7c7c7,line width=1.0pt] (20.0, 85.0) -- (20.0, 35.0).. controls (20.0, 30.0) and (25.29, 25.0) .. (30.0, 25.0).. controls (34.71, 25.0) and (40.0, 30.0) .. (40.0, 35.0) -- (40.0, 65.0).. controls (40.0, 70.0) and (45.0, 75.0) .. (50.0, 75.0).. controls (55.0, 75.0) and (55.0, 75.0) .. (60.0, 75.0).. controls (65.0, 75.0) and (65.0, 75.0) .. (65.0, 80.0) -- (65.0, 85.0) -- (75.0, 85.0) -- (75.0, 80.0).. controls (75.0, 75.0) and (75.0, 75.0) .. (80.0, 75.0) -- (90.0, 75.0).. controls (95.0, 75.0) and (100.0, 70.0) .. (100.0, 65.5) -- (100.0, 35.0).. controls (100.0, 30.0) and (105.29, 25.0) .. (110.0, 25.0).. controls (114.71, 25.0) and (120.0, 30.0) .. (120.0, 35.0).. controls (120.0, 40.0) and (120.0, 85.0) .. (120.0, 85.0) -- (130.0, 85.0).. controls (130.0, 85.0) and (130.0, 37.0) .. (130.0, 30.0).. controls (130.0, 23.0) and (125.0, 15.0) .. (115.0, 15.0) -- (25.0, 15.0).. controls (15.0, 15.0) and (10.0, 23.0) .. (10.0, 30.0).. controls (10.0, 37.0) and (10.0, 85.0) .. (10.0, 85.0) -- cycle;

\path[draw=black,fill=white,line width=1.0pt] (50.0, 60.0).. controls (50.0, 65.0) and (51.82, 65.11) .. (55.0, 65.0).. controls (58.18, 64.89) and (82.4, 65.0) .. (85.0, 65.0).. controls (87.6, 65.0) and (90.0, 65.0) .. (90.0, 60.0).. controls (90.0, 55.0) and (90.0, 40.0) .. (90.0, 35.0).. controls (90.0, 30.0) and (84.5, 24.92) .. (80.0, 25.0).. controls (75.5, 25.08) and (64.5, 24.94) .. (60.0, 25.0).. controls (55.5, 25.06) and (50.0, 30.0) .. (50.0, 35.0).. controls (50.0, 40.0) and (50.0, 55.0) .. (50.0, 60.0) -- cycle;

\path[draw=black,fill=white,line width=1.0pt] (35.0, 54.5) rectangle (55.0, 39.5);

\node[anchor=south west,line width=1.0pt] (text9) at (40.0, 43.5){$S$};

\path[draw=black,fill=white,line width=1.0pt] (85.0, 54.5) rectangle (105.0, 39.5);

\node[anchor=south west,line width=1.0pt] (text10) at (90.0, 43.5){$S$};

\path[draw=black,fill=white,line width=1.0pt,rotate around={90.0:(0.0, 841.89)}] (-831.73, 779.05) rectangle (-811.73, 764.05);

\node[anchor=south west,line width=1.0pt,rotate=90.0] (text11) at (73.21, 15.16){$S$};

\node[anchor=south west,line width=1.0pt] (text12) at (-1.76, 84.66){$*$};

\end{tikzpicture}
\end{subfigure}
\caption{Elements of order $3$ in $P_{3,+}$}\label{fig:YB-elements}
\end{figure}
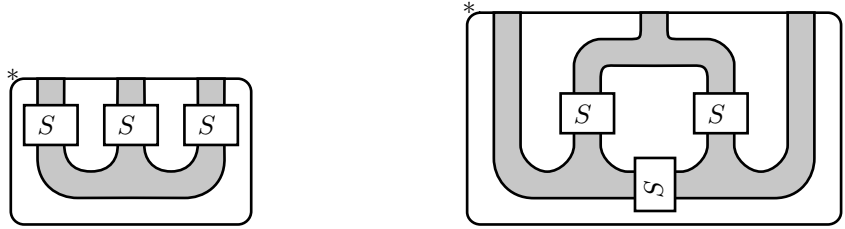

\begin{example}
Computing the parameters for the Yang--Baxter type relations for the planar algebra coming from the graph $\VO^+(6,2)$, we see that it corresponds to a category of the form $\Rep \PO_q(n)$.
Such a planar algebra can be presented as the Birman--Murakami--Wenzl planar algebra, with the value of a closed loop given by
\[
\delta = \frac{r - r^{-1}}{q - q^{-1}} + 1, \quad (r = q^{n-1}).
\]
The object $Q$ corresponds to $V^{\otimes 2}$ for the vector representation $V$ of $\rO_q(n)$ (and the product structure should be that of $\End(V)$ by the torsion-freeness of $\Rep \Spin_q(n)$; we can stick to the case $\absv{q} = 1$ by the discussion below).
We see that the braiding has the eigenvalues $1$ and $-1$, hence $q$ must be $1$ or $-1$.
This should correspond to a planar algebra corresponding to Brauer algebras, which appear as a limit of the BMW planar algebras.
We are at the limit case $\absv{q} = 1$, hence the above loop parameter becomes $n$.
We also know that this is $\sqrt{N}$, hence we get $n = \sqrt{N}$.
For $\VO^+(6,2)$, we have $N = 64$ hence $n = 8$.
Generally $\VO^+(2 k,2)$ has $N = 2^{2 k}$, hence one should expect $n = 2^k$.
\end{example}

\begin{example}
Turning to the graph $\VO^-(6,2)$, we see that the associated planar algebra again satisfies the Yang--Baxter type relations whose parameters correspond to the categories of the form \linebreak$\Rep \PSp_q(2 n)$.
Such a planar algebra can be presented as the BMW planar algebra with the value of a closed loop given by (up to sign)
\[
\delta = \frac{r - r^{-1}}{q - q^{-1}} + 1, \quad (r = q^{-2n-1}).
\]
The object $Q$ corresponds to $V^{\otimes 2}$ for the vector representation $V$ of $\Sp_q(2 n)$ (and the product structure is that of $\End(V)$ by the torsion-freeness of $\Rep \Sp_q(2 n)$ as before).
We see that the braiding has the eigenvalues $i$ and $-i$, hence $q$ must be $1$ or $-1$.
Again this should correspond to a planar algebra corresponding to Brauer algebras.
We are at the limit case $\absv{q} = 1$, hence the above loop parameter becomes $2 n$.
We also know that this is $\sqrt{N}$, hence we get $n = \frac{\sqrt{N}}2$.
For $\VO^-(6,2)$, we have $N = 64$ hence $n = 4$.
Generally $\VO^-(2 k,2)$ has $N = 2^{2 k}$, hence one should expect $n = 2^{k-1}$.
\end{example}

\section{The orthogonal polar graphs}

The orthogonal polar graph $\rO^{-}(6,q)$ is strongly regular with parameters $v=(q+1)(q^3+1)$, $k=q^3+q$, $\lambda =q-1$, and $\mu=q^2+1$.
(Following the standard convention we write $v$ instead of $N$ here.)
We are going to show that the graphs $\rO^{-}(6,q)$ do not have quantum symmetry for $q=2,3$. We handle the case $q=2$ in this section, the case $q=3$ will be obtained by a different method in the next section. Note that the graph $\rO^{-}(6,2)$ is the complement of the Schläfli graph. Therefore, we also obtain that the Schläfli graph has no quantum symmetry. 

Let us give a concrete presentation of these graphs.
Given a nonzero vector $v \in \rV(6, q) = \bF_q^6$, we denote the corresponding point in the projective space $\bP^5(\bF_q)$ by $\langle v \rangle$.
Let us take an anisotrophic quadratic form $Q_e$ on $\rV(2, q)$.
Concretely, it can be written as
\[
Q_e(v) = a_0 v_0^2 + a_1 v_0 v_1 + a_2 v_1^2 \quad (v = (v_0, v_1) \in \rV(2, q))
\]
when $a_0 + a_1 T + a_2 T^2$ is an irreducible quadratic polynomial over $\bF_q$.
Another interpretation is $Q_e(v) = N_{\bF_{q^2} / \bF_q}(v_0 + v_1 b)$ for the norm map $N_{\bF_{q^2} / \bF_q}(c) = c \sigma(c)$ of the extension $\bF_{q^2} / \bF_q$ ($\sigma$ being the nontrivial automorphism of $\bF_{q^2}$ over $\bF_q$), and a fixed $b \in \bF_{q^2} \setminus \bF_q$.

We then consider the nondegenerate quadratic form
\[
Q(v) = v_0 v_1 + v_2 v_3 + Q_e((v_4, v_5)) \quad (v = (v_i)_{0 \le i < 6} \in \rV(6, q))
\]
over $\rV(6, q)$.
By definition, the vertices of $\rO^-(6, q)$ are the points $\ang v$ of $\bP^5(\bF_q)$ such that $Q(v) = 0$, and two vertices $\ang v$ and $\ang w$ are adjacent if and only if $Q(v + w) = 0$.

Following~\cite{arXiv:1902.08984}, we consider the parameters $(q_i)_{i = 0}^3$ defined as follows.
Given $i = 0, 1, 2, 3$, take vertices $\ang u, \ang v, \ang w$ such that there are $i$ edges among them.
Then $q_i$ is the number of common neighbors of these vertices, see Figure~\ref{fig:q-i-conf}. The numbers $q_i$ are the same for any vertices $\ang u, \ang v, \ang w$ with $i$ edges among them since $\rO^{-}(6,q)$ is $3$-transitive and thus $3$-point regular. 

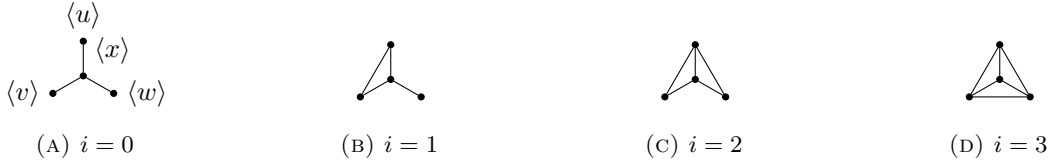
\begin{figure}[h]
\begin{subfigure}[b]{.245\linewidth}
\centering
\begin{tikzpicture}
\node[label={$\ang u$}] (corner0) at (0,1.73) [ext-vert] {};
\node[label=left:{$\ang v$}] (corner1) at (-1,0) [ext-vert] {};
\node[label=right:{$\ang w$}] (corner2) at (1,0) [ext-vert] {};
\node[label=60:$\ang x$] (inn_v0) at (0,0.58) [inn-vert] {};
\draw[] (corner0) -- (inn_v0);
\draw[] (corner1) -- (inn_v0);
\draw[] (corner2) -- (inn_v0);
\end{tikzpicture}
\caption{$i = 0$}
\end{subfigure}
\begin{subfigure}[b]{.245\linewidth}
\centering
\begin{tikzpicture}[baseline=-7pt]
\node (corner0) at (0,1.73) [ext-vert] {};
\node (corner1) at (-1,0) [ext-vert] {};
\node (corner2) at (1,0) [ext-vert] {};
\node (inn_v0) at (0,0.58) [inn-vert] {};
\draw[] (corner0) -- (corner1);
\draw[] (corner0) -- (inn_v0);
\draw[] (corner1) -- (inn_v0);
\draw[] (corner2) -- (inn_v0);
\end{tikzpicture}
\caption{$i = 1$}\label{q-i-subfig:2}
\end{subfigure}
\begin{subfigure}[b]{.245\linewidth}
\centering
\begin{tikzpicture}[baseline=-7pt]
\node (corner0) at (0,1.73) [ext-vert] {};
\node (corner1) at (-1,0) [ext-vert] {};
\node (corner2) at (1,0) [ext-vert] {};
\node (inn_v0) at (0,0.58) [inn-vert] {};
\draw[] (corner1) -- (corner0) -- (corner2);
\draw[] (corner0) -- (inn_v0);
\draw[] (corner1) -- (inn_v0);
\draw[] (corner2) -- (inn_v0);
\end{tikzpicture}
\caption{$i = 2$}\label{q-i-subfig:3}
\end{subfigure}
\begin{subfigure}[b]{.245\linewidth}
\centering
\begin{tikzpicture}[baseline=-7pt]
\node (corner0) at (0,1.73) [ext-vert] {};
\node (corner1) at (-1,0) [ext-vert] {};
\node (corner2) at (1,0) [ext-vert] {};
\node (inn_v0) at (0,0.58) [inn-vert] {};
\draw (corner0) -- (corner1) -- (corner2) -- (corner0);
\draw[] (corner0) -- (inn_v0);
\draw[] (corner1) -- (inn_v0);
\draw[] (corner2) -- (inn_v0);
\end{tikzpicture}
\caption{$i = 3$}
\end{subfigure}
\caption{$q_i$ counts the number of inner vertices $\ang x$}\label{fig:q-i-conf}
\end{figure}

\begin{lemma}\label{lem:orth-line-to-u}
Let $u = (1, 0, 0, 0, 0, 0) \in \rV(6, q)$, and suppose that $\ang u$ is connected to $\ang x$ in $\rO^-(6, q)$.
Then $x$ is of the form $(x_0, 0, x_2, x_3, x_4, x_5)$ with $x_2 x_3 + Q_e((x_4, x_5)) = 0$.
\end{lemma}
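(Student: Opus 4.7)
The plan is a direct bilinear-form computation. First observe that $u = (1,0,0,0,0,0)$ satisfies $Q(u) = 1 \cdot 0 + 0 + Q_e(0,0) = 0$, so $\ang u$ is a genuine vertex. Now write $x = (x_0, x_1, x_2, x_3, x_4, x_5)$ with $Q(x) = 0$ (since $\ang x$ is a vertex) and $Q(u+x) = 0$ (by the adjacency definition in the paper).

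I expand
\[
Q(u+x) = (1+x_0)x_1 + x_2 x_3 + Q_e((x_4, x_5)) = x_1 + \bigl(x_0 x_1 + x_2 x_3 + Q_e((x_4,x_5))\bigr) = x_1 + Q(x).
\]
Combining $Q(u+x) = 0$ with $Q(x) = 0$ yields $x_1 = 0$. Substituting $x_1 = 0$ back into $Q(x) = 0$ collapses the first summand and leaves
\[
x_2 x_3 + Q_e((x_4, x_5)) = 0,
\]
which is exactly the asserted condition. There is no real obstacle here; the only thing to double-check is that the adjacency relation $Q(u+x)=0$ is insensitive to the choice of representatives of $\ang u, \ang x$, but this follows from $Q(\lambda u + \mu x) = \lambda\mu\, B(u,x)$ on isotropic vectors, where $B$ is the polarization of $Q$.
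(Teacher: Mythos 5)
Your proof is correct and rests on the same basic computation as the paper's, namely expanding $Q(u+x)$ and comparing with $Q(x)=0$. The only real difference is organizational: the paper splits into the cases $x_0 \neq 0$ and $x_0 = 0$, evaluating $Q$ on the specific representatives $x - x_0 u$ and $x + u$ respectively, whereas you note once that $Q(u+x) = x_1 + Q(x) = x_1$ and justify using the representatives $u$ and $x$ directly via the polarization identity $Q(\lambda u + \mu x) = \lambda\mu\, B(u,x)$ on isotropic vectors, which collapses the two cases into one line and works uniformly in all characteristics. Both arguments are valid; yours is marginally cleaner.
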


\begin{proof}
First, since $\ang x$ is an vertex of $\rO^-(6, q)$, we have
\begin{equation}\label{eq:x-def-vert-O-min-6-q}
x_0 x_1 + x_2 x_3 + Q((x_4, x_5)) = 0.
\end{equation}

Suppose $x_0 \neq 0$.
As $\ang{-x_0 u} = \ang u$ is adjacent to $\ang x$, we have $Q(x - x_0 u) = 0$, that is,
\[
x_2 x_3 + Q((x_4, x_5)) = 0.
\]
From this we get $x_0 x_1 = 0$, hence $x_1 = 0$.

Next suppose $x_0 = 0$.
Then using again that $\ang u$ is adjacent to $\ang x$, we have $Q(x + u) = 0$, that is,
\[
x_1 + x_2 x_3 + Q((x_4, x_5)) = 0.
\]
This forces $x_1 = 0$.
\end{proof}

\begin{proposition}
We have $q_0=q+1$, $q_1=1$, $q_2=0$ and $q_3=q-2$ for $\rO^-(6, q)$.
\end{proposition}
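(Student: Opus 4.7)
The plan is to exploit $3$-point regularity (a consequence of $3$-transitivity) to reduce each count to a specific configuration, together with Lemma~\ref{lem:orth-line-to-u} and the anisotropy of $Q_e$. Throughout I normalize $\ang u = \ang{e_0}$. For the cases $q_1, q_2, q_3$ I also set $\ang v = \ang{e_2}$; a common neighbor $\ang x$ of $\ang u$ and $\ang v$ then satisfies $x_1 = x_3 = 0$, and the vertex condition $Q(x) = 0$ reduces to $Q_e(x_4, x_5) = 0$, which forces $(x_4, x_5) = 0$ by anisotropy. Hence such $\ang x$ lies on the projective line through $\ang{e_0}$ and $\ang{e_2}$, so the common neighbors of $\ang u$ and $\ang v$ are exactly the $q - 1$ points on that line distinct from $\ang u$ and $\ang v$.

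From this backbone, for $q_3$ I would close the triangle with $\ang w = \ang{e_0 + e_2}$; then $B(w, x) = x_1 + x_3$ vanishes automatically on the line, and removing $\ang w$ yields $q_3 = q - 2$. For $q_2$ the vertex $\ang w$ is adjacent to $\ang u$ but not to $\ang v$, so $w_1 = 0$ and $w_3 \neq 0$, and the extra condition $B(w, x) = w_3 x_2 = 0$ kills $x_2$, leaving only $\ang{e_0} = \ang u$ and hence $q_2 = 0$. For $q_1$ the vertex $\ang w$ is non-adjacent to both $\ang u$ and $\ang v$, so $w_1, w_3 \neq 0$, and the single equation $w_1 x_0 + w_3 x_2 = 0$ has the unique projective solution $\ang{(-w_3, 0, w_1, 0, 0, 0)}$, distinct from $\ang u$ and $\ang v$, giving $q_1 = 1$.

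The $q_0$ case requires more geometric input. Here I take $\ang v = \ang{e_1}$; then the common neighbors of $\ang u$ and $\ang v$ are the points of $V_\perp = \{x_0 = x_1 = 0\}$ satisfying $Q(x) = 0$, i.e., the elliptic quadric $Q^-(3, q) \subset \bP(V_\perp)$ with $q^2 + 1$ points. Given $\ang w$ non-adjacent to both $\ang u$ and $\ang v$ (so $w_0, w_1 \neq 0$), the additional condition $B(w, x) = 0$ restricted to $V_\perp$ is represented by the vector $w' = (0, 0, w_2, w_3, w_4, w_5) \in V_\perp$ satisfying $Q(w') = Q(w) - w_0 w_1 = -w_0 w_1 \neq 0$. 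Hence $\ang{w'}$ lies off the elliptic quadric, its polar hyperplane is non-tangent, and the intersection with $Q^-(3, q)$ is a non-degenerate plane conic with exactly $q + 1$ points; since none of $\ang u, \ang v, \ang w$ lie in $V_\perp$, the final count is $q_0 = q + 1$.

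The main obstacle is this last geometric step: one must invoke the standard classification of hyperplane sections of the elliptic quadric $Q^-(3, q)$ in $\bP^3(\bF_q)$, namely that every non-tangent section is a non-degenerate conic of $q + 1$ points, with tangency equivalent to the pole lying on the quadric.
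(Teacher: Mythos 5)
Your argument is correct. For $q_1$, $q_2$, and $q_3$ it is essentially the proof in the paper: there too one fixes $u = e_0$ and $v = e_2$, uses Lemma~\ref{lem:orth-line-to-u} together with the anisotropy of $Q_e$ to identify the common neighbours of $\ang u$ and $\ang v$ with the $q-1$ remaining points of the projective line $\ang{e_0, e_2}$, and then imposes the condition coming from $w$ (the paper works with one explicit $w$ per configuration and appeals to $3$-point regularity, whereas you carry a general admissible $w$ through the computation; the two are interchangeable). The $q_0$ case is where you genuinely diverge. The paper chooses $v = (0,1,1,0,0,0)$ and $w = (0,1,0,1,0,0)$, reduces to counting projective solutions of $a^2 + Q_e((x_4,x_5)) = 0$, and finishes using surjectivity of the norm map $\bF_{q^2}^\times \to \bF_q^\times$ together with a case split on the parity of $q$. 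You instead take $v = e_1$, identify the common neighbours of $\ang u$ and $\ang v$ with the elliptic quadric $Q^-(3,q) \subset \bP(V_\perp)$ (consistently with $\mu = q^2+1$), and read off $q_0 = q+1$ from the fact that a non-tangent plane section of an elliptic quadric is a conic with $q+1$ points. Your route is more conceptual and avoids the parity split, at the price of importing the standard classification of plane sections of ovoids, which the paper's self-contained computation does not need. One point you should make explicit: in characteristic $2$ the ``pole'' has to be taken with respect to the polarization $B$ of $Q$, which is then a nondegenerate alternating form, and the statement that the tangent planes of the elliptic quadric are exactly the $B$-polars of its points does hold in every characteristic; with that understanding your criterion $Q(w') = -w_0 w_1 \neq 0$ correctly rules out tangency uniformly in $q$, and the count $q_0 = q+1$ follows.
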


\begin{proof}
We will always use $u = (1, 0, 0, 0, 0, 0)$.
Then we consider $v$, $w$, and $x$ that will give above configurations.

\begin{step}
$q_3 = q-2$.
\end{step}

Here we take $v = (0, 0, 1, 0, 0, 0)$ and $w = (1, 0, 1, 0, 0, 0)$.
By Lemma~\ref{lem:orth-line-to-u} and its analogue for $v$, we see that $x$ is of the form $(x_0, 0, x_2, 0, x_4, x_5)$.
Then~\eqref{eq:x-def-vert-O-min-6-q} implies that $Q_e((x_4, x_5)) = 0$, hence $x_4 = 0 = x_5$.
It follows that $x$ is of the form $(x_0, 0, x_2, 0, 0, 0)$, and any such $x$ is allowed except for $x_0 = 0 = x_2$.
Since we are counting lines that are not $\ang u$, $\ang v$, nor $\ang w$, we have $(q^2 - 1) / (q - 1) - 3 = q - 2$ points $\ang x$ in this configuration.

\begin{step}
$q_2 = 0$.
\end{step}

Here we take $v = (0, 0, 1, 0, 0, 0)$ and $w = (0, 0, 0, 1, 0, 0)$.
Again $x$ is of the form $(x_0, 0, x_2, 0, 0, 0)$ from the relation between $u$ and $v$.
For $\ang x$ to be adjacent to $\ang w$, we must have $x_2 = 0$.
This forces $\ang x = \ang v$, which is forbidden.

\begin{step}
$q_1 = 1$.
\end{step}

Here we take $v = (0, 0, 1, 0, 0, 0)$ and $w = (0, 1, 0, 1, 0, 0)$, so that $x$ is of the form $(x_0, 0, x_2, 0, 0, 0)$.
We get $x_0 + x_2 = 0$ for $\ang x$ to be adjacent to $\ang w$, hence there is exactly one point $\ang x$ in this configuration.

\begin{step}
$q_0 = q+1$.
\end{step}

Here we take $v = (0, 1, 1, 0, 0, 0)$ and $w = (0, 1, 0, 1, 0, 0)$.
From $Q(x + v) = 0 = Q(x)$, we get $x_0 + x_2 = 0$.
Similarly we get $x_0 + x_3 = 0$ from $Q(x + w) = 0$, hence we have $x = (a, 0, -a, -a, x_4, x_5)$ with $a^2 + Q_e((x_4, x_5)) = 0$.
Now, the norm map is a surjective homomorphism $\bF_{q^2}^\times\to \bF_q^\times$, hence for each possible value of $-a^2$ there are $q+1$ solutions of $(x_4, x_5)$.

Suppose $q = 2^e$.
Then $\bF_q^\times$ is a cyclic group of odd order, hence $a^2 = -a^2$ can take all possible values.
Then there are $(q-1)(q+1)$ possible configurations of $x$, and since we are counting lines we have $q+1$ vertices $\ang x$.

Next suppose $q = p^e$ for an odd prime.
Then $\bF_q^\times$ is a cyclic group of even order, hence $-a^2$ can take $(q-1)/2$ different values.
This time, if $x = (a, 0, -a, -a, x_4, x_5)$ represents a valid solution, $(-a, 0, a, a, x_4, x_5)$ is another valid solution.
Thus we have $2 \times (q-1) / 2 \times (q+1)$ possible configurations of $x$, hence $q+1$ vertices $\ang x$ again.
\end{proof}



\begin{theorem}\label{thm:orth-pol-graph-no-q-sym}
The algebra $\cO(\rO^-(6, 2))$ is commutative. Therefore, the Schläfli graph has no quantum symmetry.
\end{theorem}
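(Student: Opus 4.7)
The plan is to work in the planar algebra $P^X$ associated to $X = \rO^-(6, 2)$, exploit the very favorable parameters $(q_0, q_1, q_2, q_3) = (3, 1, 0, 0)$ just computed, and show that the shaded planar algebra $P^X$ coincides with the one arising from the classical action of $G = \Aut(X)$ on the $27$ vertices. Since $\cO(G)$ is commutative and is a Hopf quotient of $\cO(\Aut^+(X))$, the categorical equivalence $\Rep \Aut^+(X) \simeq \Rep G$ would then force $\cO(\Aut^+(X)) = \cO(G)$ to be commutative, and the statement for the Schläfli graph follows at once because a graph and its complement have the same quantum automorphism group.

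\paragraph{Execution.}
The first step is to describe a natural spanning set of $P^X_{3,+}$ by writing out all $3$-box diagrams obtained from planar compositions of at most three copies of the adjacency generator $S \in P^X_{2,+}$ of Figure~\ref{fig:adj-mat-elem}. Each such diagram unfolds into a sum over tuples $(u, v, w, x)$ of vertices whose coefficient is controlled by the $q_i$. The vanishing $q_2 = 0 = q_3$ should immediately kill several families of these diagrams (those whose internal index $x$ is forced to be a common neighbor of a $P_3$- or a $K_3$-configuration of the external vertices), so that the remaining diagrams fall into $15$ types indexed by the $15$ orbits of $G$ on ordered triples in $X$ (one all-diagonal type, six types with exactly one coincidence among the three indices, and eight types of distinct triples: one $K_3$, one $K_3^c$, three of $P_3$ shape and three of edge-plus-isolated-vertex shape). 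The next step is to verify linear independence of these $15$ elements and, more importantly, to match their planar-algebra composition constants with the orbit algebra of $G$ acting on $X$; combined with Proposition~\ref{prop:P-Q-u-gen-t-d}, which says that $Q$ together with $v$, $m$, $d$ generates $\Rep \Aut^+(X)$, this matching up to the $3$-box level should propagate to all higher boxes and yield the desired equivalence of categories.

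\paragraph{Main obstacle.}
The hardest part will be exactly this matching of composition constants: the bound $\dim P^X_{3,+} \leq 15$ together with the classical lower bound of $15$ pins down the dimension, but dimension alone does not rule out a nontrivial cocycle-type twist of the classical planar algebra. Making the planar algebra come out genuinely classical should therefore require the careful use of the specific small values $q_0 = 3$ and $q_1 = 1$ (which dictate concrete intersection numbers for the inner vertex $\ang x$ in Figure~\ref{fig:q-i-conf}), together with the strong-regularity parameters $(27, 10, 1, 5)$, rather than only the $3$-transitivity. It is also presumably the reason that this argument should break down for larger $q$, since $q_3 = q - 2$ stops vanishing as soon as $q \geq 3$, which is consistent with the paper's remark that the case $q = 3$ will be handled by a different method.
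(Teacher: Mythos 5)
Your overall strategy---identify $P^X$ with the classical fixed-point planar algebra $P^{\Aut(X)}$ and conclude commutativity via Tannaka--Krein---is sound in its outer logic, but the key step is not: agreement of the $2$- and $3$-box spaces with the classical ones does \emph{not} propagate to higher boxes, and commutativity is genuinely a $4$-box phenomenon (it is equivalent to the flip $\sum_{x,y}\delta_x\otimes\delta_y\otimes\delta_x\otimes\delta_y$ lying in $P^X_{4,+}$, as recorded at the start of Section~\ref{sec:comb-pres}). The paper itself contains counterexamples to your propagation claim: the affine polar graphs $\VO^{\epsilon}(2k,2)$ are $3$-transitive and their planar algebras satisfy the Yang--Baxter type relations, so $\dim P^X_{3,+}=15=\dim P^{\Aut(X)}_{3,+}$, yet these graphs \emph{do} have quantum symmetry ($\Aut^+$ is monoidally equivalent to $\PO(2^k)$ or $\PSp(2^k)$, with $\cO(\Aut^+(X))$ noncommutative and infinite dimensional). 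So pinning down $P^X_{3,+}$, even with all its composition constants, cannot by itself rule out quantum symmetry; Proposition~\ref{prop:P-Q-u-gen-t-d} only says that the adjacency $2$-box generates $P^X$ as a planar algebra, not that the generated planar algebra exhausts $P^{\Aut(X)}$ --- the latter is exactly the question at issue. To make your route work for $\rO^-(6,2)$ you would have to exhibit enough elements of $P^X_{4,+}$ (built from planar diagrams with internal vertices of valency $\ge 3$) to capture the flip, which is what the paper does as a computer-algebra cross-check (the explicit combination of the $a_j$ given after Theorem~\ref{thm:orth-pol-graph-no-q-sym}) and what it is forced to do for the McLaughlin graph and $\rO^-(6,3)$.

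The paper's actual proof of this theorem is entirely different and more elementary: it works directly with the generators $u_{ij}$ of $\cO(\Aut^+(\rO^-(6,2)))$ and the relations \eqref{rel1}--\eqref{rel3}, inserting resolutions of the identity $\sum_t u_{tb}$ over well-chosen vertices $b$ and using the combinatorial parameters $q_1=1$, $q_2=0$ together with $\lambda=1$ and $\mu=5$ to kill cross terms, ultimately showing $u_{ij}u_{kl}=u_{ij}u_{kl}u_{ij}$ and hence $u_{ij}u_{kl}=u_{kl}u_{ij}$ in the two nontrivial cases. You correctly identified that the small parameters at $q=2$ (in particular $\lambda=q-1=1$) are what make the argument special to this case, but the mechanism through which they are used is the generator calculus, not a rigidity statement about the $3$-box space.
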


\begin{proof}
Since a major part of the proof is valid for $O^-(6, q)$ in general, let us start in that setting.
As before, let us denote by $u_{ij}$,  $0 \leq i,j < v$ the generators of $\cO(\Aut^+(\rO^{-}(6,q)))$.
We have orthogonality among some generators from~\eqref{rel3}, and also from $u_{i j} u_{k l} = 0$ if $\absv{\{i, j, k, l\}} = 3$, see~\eqref{rel1}.
Thus, it is enough to show $u_{ij}u_{kl}=u_{kl}u_{ij}$ for all $(i,k)\in E$, $(j,l)\in E$, and for all $(i,k)\notin E$, $(j,l)\notin E$.

\begin{step}\label{st:1}
We have $u_{ij}u_{kl}u_{ip}=0$ for $(i,k)\in E, (j,l)\in E$ and $p\neq j, (j,p)\notin E$.
\end{step}

Let $p\neq j, (j,p)\notin E$. Since $q_2=0$ there is no common neighbor of $j,l$ and $p$. Since $\lambda =q-1$, there is a common neighbor $b$ of $j$ and $l$, which is not adjacent to $p$ by the previous sentence. By~\eqref{rel2} and~\eqref{rel3}, we have
\begin{equation*}
u_{ij}u_{kl}u_{ip}=u_{ij}\biggl(\sum_{t;(i,t)\in E, (t,k)\in E}u_{tb}\biggr)u_{kl}u_{ip}. 
\end{equation*}
Since $q_1=1$, the vertex $l$ is the only common neighbor of $j,b$ and $p$. Let $a\neq l$. If $a$ is not adjacent to $b$ or $p$, we directly get 
\begin{align*}
u_{ij}\biggl(\sum_{t;(i,t)\in E, (t,k)\in E}u_{tb}\biggr)u_{ka}u_{ip}=0
\end{align*}
 by~\eqref{rel3}. If $a$ is adjacent to $p$ and $b$ but not adjacent to $j$, then 
 \begin{align*}
 u_{ij}\biggl(\sum_{t;(i,t)\in E, (t,k)\in E}u_{tb}\biggr)u_{ka}u_{ip}=u_{ij}u_{ka}u_{ip}=0
 \end{align*}
  by~\eqref{rel3}. Thus, we get 
  \begin{align*}
  u_{ij}u_{kl}u_{ip}&=u_{ij}\biggl(\sum_{t;(i,t)\in E, (t,k)\in E}u_{tb}\biggr)u_{kl}u_{ip}\\
  &=u_{ij}\biggl(\sum_{t;(i,t)\in E, (t,k)\in E}u_{tb}\biggr)\biggl(\sum_a u_{ka}\biggr)u_{ip}\\
  &=u_{ij}\biggl(\sum_{t;(i,t)\in E, (t,k)\in E}u_{tb}\biggr)u_{ip}\\
  &=0,
  \end{align*}
  since $(b,p)\notin E$. 

\begin{step}\label{st:3}
For $q=2$, it holds $u_{ij}u_{kl}=u_{kl}u_{ij}$ for $(i,k)\in E$ and $(j,l)\in E$.
\end{step}

Let $(i,k)\in E$ and $(j,l)\in E$. It holds
\begin{align*}
u_{ij}u_{kl}=u_{ij}u_{kl}\sum_p u_{ip}=u_{ij}u_{kl}\sum_{p;(l,p)\in E} u_{ip} 
\end{align*}
by first using~\eqref{rel2} and then~\eqref{rel3}. By Step~\ref{st:1}, we get 
\begin{align}
u_{ij}u_{kl}\sum_{p;(l,p)\in E} u_{ip} =u_{ij}u_{kl}\sum_{\substack{p;(l,p)\in E, (p,j)\in E,\\ \text{or } p=j}} u_{ip}.\label{eq3}
\end{align}
Consider $p\neq j$ such that $(l,p)\in E$ and $(p,j)\in E$. Since for $q=2$ it holds $\lambda=1$, we know that $l$ is the only common neighbor of $j$ and $p$. Therefore, we get
\begin{align*}
    u_{ij}u_{ka}u_{ip}=0
\end{align*}
 for $a\neq l$, since it holds $(j,a)\notin E$ or $(a,p)\notin E$ for all such $a$. We deduce
 \begin{align*}
     u_{ij}u_{kl}u_{ip}=u_{ij}\left(\sum_a u_{ka}\right)u_{ip}=u_{ij}u_{ip}=0,
 \end{align*}
 since $j\neq p$. Together with \eqref{eq3}, this yields $u_{ij}u_{kl}=u_{ij}u_{kl}u_{ij}$. Using the involution, we get the assertion of Step \eqref{st:3}.

\begin{step}\label{st:4}
It holds $u_{ij}u_{kl}=u_{kl}u_{ij}$ for $(i,k)\notin E$ and $(j,l)\notin E$.
\end{step}

Let $(i,k)\notin E$ and $(j,l)\notin E$. It holds
\begin{align*}
u_{ij}u_{kl}=u_{ij}u_{kl}\sum_p u_{ip}=u_{ij}u_{kl}\sum_{p;(l,p)\notin E} u_{ip}
\end{align*}
by first using~\eqref{rel2} and then~\eqref{rel3}. Let $p\neq j, (l,p)\notin E$. Let $s$ be a common neighbor of $j$ and $l$ that is not adjacent to $p$. Such $s$ exists because of the following. We know that $j,l$ have $\mu=q^2+1$ common neighbors. For $(p,j)\in E$ only one of those is adjacent to $p$ since $q_1=1$ and for $(p,j)\notin E$, there are $q+1$ common neighbors that are adjacent to $p$. This leaves us with $q^2$ and $q^2-q$ such $s$ for $(p,j)\in E$, $(p,j)\notin E$, respectively. We have 
\begin{align*}
u_{ij}u_{kl}u_{ip}= u_{ij}\biggl(\sum_{\substack{t;(t,i)\in E; (k,t)\in E}}u_{ts}\biggr)u_{kl}u_{ip}
\end{align*}
by~\eqref{rel2} and~\eqref{rel3}. By Step~\ref{st:3},
we get 
\begin{align*}
u_{ij}\biggl(\sum_{\substack{t;(t,i)\in E; (k,t)\in E}}u_{ts}\biggr)u_{kl}u_{ip}=u_{ij}u_{kl}\biggl(\sum_{\substack{t;(t,i)\in E; (k,t)\in E}}u_{ts}\biggr)u_{ip}=0
\end{align*}
because $u_{ts}u_{ip}=0$ for $(t,i)\in E, (s,p)\notin E$. We conclude $u_{ij}u_{kl}=u_{ij}u_{kl}u_{ij}$ for $(i,k)\notin E$ and $(j,l)\notin E$.

We conclude that the Schläfli graph has no quantum symmetry, since it is the complement of the graph $\rO^-(6, 2)$.

 \end{proof}

\section{Combinatorial presentation}\label{sec:comb-pres}

Our next goal is to show that the McLaughlin graph and the graph $\rO^-(6, 3)$ do not have quantum symmetry.
In terms of the planar algebra $P^X$, this is equivalent to showing that the `flip' element
\[
\sum_{0 \le x, y < N} \delta_x \otimes \delta_y \otimes \delta_x \otimes \delta_y \in P_{4,+}
\]
of the spin planar algebra belongs to $P^X_{4,+}$.

\subsection{Combinatorial intertwiners}

We use certain planar graphs to represent elements of $P^X_{k,+}$.
Such a graph should satisfy the following components:
\begin{itemize}
\item $k$ `external' vertices $v^e_i$ for $0 \le i < k$,
\item zero or more `internal' vertices,
\item double strike edges between external vertices,
\item single strike edges between vertices.
\end{itemize}
The planarity means that when we arrange the external vertices in clockwise order on a convex $k$-gon, the internal vertices can be arranged inside it such that the edges do not intersect.

Given such a graph $\Gamma$, consider the factors
\begin{itemize}
\item $c_\Gamma^{x_0,\dots,x_{k-1}}$: the number of embeddings of the subgraph of $\Gamma$ formed by the single strike edges (and all vertices) into $X$ that sends each $v^e_i$ to $x_i$,
\item $d_\Gamma^{x_0,\dots,x_{k-1}} \in \{0, 1\}$: $d_\Gamma^{x_0,\dots,x_{k-1}} = 1$ if, up to the above correspondence of vertices, the values of $x_i$ are constant on the connected components of the subgraph of $\Gamma$ formed by the single strike edges, and $d_\Gamma^{x_0,\dots,x_{k-1}} = 0$ otherwise.
\end{itemize}
The element $a_\Gamma$ of $P_{k,+}$ associated with $\Gamma$ is given by
\begin{equation}\label{eq:a-Gamma-element}
a_\Gamma = \sum_{\substack{0 \le x_i < N\\ 0 \le i < k}}  c_\Gamma^{x_0,\dots,x_{k-1}} d_\Gamma^{x_0,\dots,x_{k-1}} \delta_{x_0} \otimes \dots \otimes \delta_{x_{k-1}}.
\end{equation}

\begin{remark}
This is nothing but the element $T^{K \to X}$ of~\cite{MR4232075}, where $K$ is a certain (bi-)labeled graph whose underlying graph is obtained by identifying the external vertices of $\Gamma$ that are connected by double strike edges, and we take $\ell = 0$ in their notation.
In the presentation of planar algebra elements from Section~\ref{sec:cat-pres}, a black segment on the boundary corresponds to an external vertex.
A black connected component on the boundary correspond to a connected components by double strike edge.
A black region not touching the boundary correspond to an internal vertex.
A $2$-box (element of $P^X_{2,+}$) labeled by the adjacency matrix $A$ corresponds to a single strike edge.
\end{remark}

\subsection{Some reductions}

There are some simplifications we can make to understand the elements of $P^X_{k,+}$.
First is the following reduction to orbits.

\begin{proposition}
Let $a$ be an element of $P^X_{k,+}$, given by the linear combination of simple tensors
\begin{equation}\label{eq:elem-P-k-from-coeff}
a = \sum_{\substack{0 \le x_i < N\\ 0 \le i < k}} a^{x_0, \dots, x_{k-1}} \delta_{x_0} \otimes \cdots \otimes \delta_{x_{k-1}} \in P_{k,+}.
\end{equation}
For any $g \in \Aut(X)$, we have $a^{x_0, \dots, x_{k-1}}  = a^{g(x_0), \dots, g(x_{k-1})}$.
\end{proposition}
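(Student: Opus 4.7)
The plan is to use the fact that $P^{X}_{k,+}$ is, by construction, the space of $\Aut^+(X)$-intertwiners $\bC \to Q^{\otimes k}$ (up to the identifications coming from the realization of $P^{X,g}$ inside the spin planar algebra). Since the classical automorphism group $\Aut(X)$ embeds in the quantum one, the coefficients of such an intertwiner must in particular be $\Aut(X)$-invariant, which is precisely the claimed identity.

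More concretely, I would proceed as follows. First, recall that there is a surjective Hopf $*$-algebra map $\cO(\Aut^+(X)) \twoheadrightarrow \cO(\Aut(X))$, namely the abelianization (equivalently, $\Aut(X)$ is the maximal classical subgroup cut out by the relations in~\eqref{rel1}--\eqref{rel3}). Dually, every object of $\Rep \Aut^+(X)$ restricts to a representation of $\Aut(X)$, and $\Aut^+(X)$-morphisms restrict to $\Aut(X)$-equivariant maps. In particular, the vector space $Q$ of functions on the vertex set, with its canonical basis $(\delta_x)$, carries the permutation action $g\cdot \delta_x = \delta_{g(x)}$, and this extends diagonally to $Q^{\otimes k}$.

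Next, I would unwind the identification $P^X_{k,+} \subset P_{k,+}$ from Section~\ref{sec:cat-pres}: elements of $P^X_{k,+}$ correspond (up to a normalization that does not affect the orbit structure of coefficients) to elements of $\Hom_{\Aut^+(X)}(\bC, Q^{\otimes k})$. Applying the restriction above, any $a$ as in~\eqref{eq:elem-P-k-from-coeff} defines an $\Aut(X)$-equivariant map $\bC \to Q^{\otimes k}$, i.e., a fixed vector under the diagonal action. Writing out this invariance,
\[
\sum_{x_0,\dots,x_{k-1}} a^{x_0,\dots,x_{k-1}} \delta_{g(x_0)} \otimes \cdots \otimes \delta_{g(x_{k-1})}
= \sum_{x_0,\dots,x_{k-1}} a^{x_0,\dots,x_{k-1}} \delta_{x_0} \otimes \cdots \otimes \delta_{x_{k-1}},
\]
and re-indexing the left-hand side by $y_i = g(x_i)$, one reads off $a^{g^{-1}(y_0),\dots,g^{-1}(y_{k-1})} = a^{y_0,\dots,y_{k-1}}$ for all $g$, which is equivalent to the stated equality.

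There is essentially no obstacle here beyond bookkeeping: the only subtlety is to match conventions between the spin planar algebra presentation (where $a$ sits inside $P_{k,+}$ with a $\sqrt{N}$-type normalization as in Figure~\ref{fig:mat-diag-pres}) and the intertwiner space $\Hom_{\Aut^+(X)}(\bC, Q^{\otimes k})$, so that the diagonal $\Aut(X)$-action on $Q^{\otimes k}$ translates into a permutation of the coefficient indices rather than a more complicated transformation. Once this is checked, the argument is immediate and, as a byproduct, shows that for a $t$-transitive graph the coefficients $a^{x_0,\dots,x_{k-1}}$ depend only on the $\Aut(X)$-orbit of the tuple $(x_0,\dots,x_{k-1})$, which is exactly the reduction principle that will be exploited in Section~\ref{sec:comb-pres}.
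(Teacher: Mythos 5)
Your proposal is correct and follows essentially the same route as the paper: both arguments reduce to the inclusion $\Aut(X) < \Aut^+(X)$, which forces $P^X_{k,+} \subseteq (Q^{\otimes k})^{\Aut(X)}$, and then read off the orbit-invariance of the coefficients from invariance under the diagonal permutation action. Your version merely spells out the re-indexing step that the paper leaves implicit.
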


This is also obvious for the elements $a_\Gamma$ introduced above.

\begin{proof}
The inclusion of compact quantum groups $\Aut(X) < \Aut^+(X)$ corresponds to the inclusion of planar algebras $P^X < P^{\Aut(X)}$.
We have the claimed relation of coefficients for the elements in $P^{\Aut(X)}_{k,+} = (Q^{\otimes k})^{\Aut(X)}$.
\end{proof}

\begin{proposition}\label{prop:d2d3}
Let $X$ be a $3$-transitive graph.
Then we have
\begin{align*}
\dim P^X_{2,+} &\le 3,&
\dim P^X_{3,+} &\le 15.
\end{align*}
\end{proposition}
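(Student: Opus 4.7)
The plan is to invoke the preceding proposition: any element of $P^X_{k,+}$, written out in the form~\eqref{eq:elem-P-k-from-coeff}, has coefficients $(a^{x_0,\ldots,x_{k-1}})$ which are constant along the orbits of the classical automorphism group $\Aut(X)$ acting diagonally on tuples $(x_0, \ldots, x_{k-1})$ of vertices. Hence $\dim P^X_{k,+}$ is bounded above by the number of such $\Aut(X)$-orbits, and it suffices to count these orbits for $k=2$ and $k=3$ using the $3$-homogeneity of $X$.

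For $k = 2$: since $3$-homogeneity implies $2$-homogeneity, the diagonal, the ordered adjacent pairs of distinct vertices, and the ordered non-adjacent pairs of distinct vertices each form a single orbit, giving exactly $3$ orbits and hence the bound $\dim P^X_{2,+} \le 3$.

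For $k = 3$: I would classify ordered triples by their coincidence pattern. The diagonal case $x_0 = x_1 = x_2$ contributes $1$ orbit. For each of the three patterns in which exactly two coordinates coincide, the remaining distinct pair is either adjacent or not, giving $3 \times 2 = 6$ orbits by $2$-homogeneity. For triples of pairwise distinct vertices, $3$-homogeneity says that two ordered triples lie in the same orbit precisely when the obvious bijection between them is an isomorphism of induced subgraphs, so the number of such orbits is bounded by the $2^3 = 8$ possible adjacency patterns on an ordered triple of distinct vertices. Summing yields at most $1 + 6 + 8 = 15$ orbits.

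The only substantive input is the preceding proposition reducing the problem to an orbit count; after that, the argument is elementary combinatorics, and there is no real obstacle. Note that the count is genuinely an upper bound because some of the $8$ potential adjacency patterns on $3$ distinct vertices may fail to be realised in a given graph $X$ (yielding a strict inequality in those cases), but for the stated bound this is immaterial.
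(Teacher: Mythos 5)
Your proposal is correct and follows essentially the same route as the paper: reduce to counting $\Aut(X)$-orbits on ordered tuples via the inclusion $P^X_{k,+} \subset (Q^{\otimes k})^{\Aut(X)}$, then enumerate the configurations, and your tally $1 + 6 + 8 = 15$ matches the paper's count of the seven configuration types with their multiplicities.
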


Note that we have $P^X_{2,+} = 2$ only in the case $X = K_N$.

\begin{proof}
Generally we have $\dim P^X_{k,+} \le \dim (Q^{\otimes k})^{\Aut(X)}$ from the inclusion of planar algebras.
Note that the right hand side is equal to the number $\omega_k$ of orbits of $\Aut(X)$ on $\{i \mid 0 \le i < N\}^k$.
For $k = 2$, we have $\omega_2 \le 3$ from the $2$-transitivity of $X$.
For $k = 3$, we have $15$ configurations of the triples of vertices, that bound $\omega_3$ by the $3$-transitivity of $X$, see Figure~\ref{fig:3-vert-conf}.

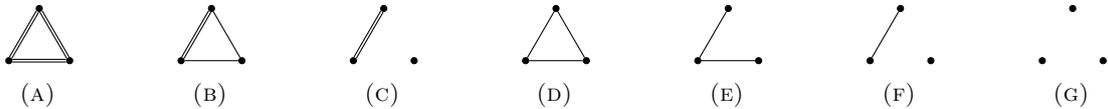
\begin{figure}[h]
\begin{subfigure}[b]{.135\linewidth}
\centering
\begin{tikzpicture}
\node (corner0) at (0,1.73) [ext-vert] {};
\node (corner1) at (-1,0) [ext-vert] {};
\node (corner2) at (1,0) [ext-vert] {};
\draw[double] (corner0) -- (corner1) -- (corner2) -- (corner0);
\end{tikzpicture}
\caption{}
\end{subfigure}
\begin{subfigure}[b]{.135\linewidth}
\centering
\begin{tikzpicture}
\node (corner0) at (0,1.73) [ext-vert] {};
\node (corner1) at (-1,0) [ext-vert] {};
\node (corner2) at (1,0) [ext-vert] {};
\draw[double] (corner0) -- (corner1);
\draw (corner1) -- (corner2) -- (corner0);
\end{tikzpicture}
\caption{}\label{subfig:2}
\end{subfigure}
\begin{subfigure}[b]{.135\linewidth}
\centering
\begin{tikzpicture}
\node (corner0) at (0,1.73) [ext-vert] {};
\node (corner1) at (-1,0) [ext-vert] {};
\node (corner2) at (1,0) [ext-vert] {};
\draw[double] (corner0) -- (corner1);
\end{tikzpicture}
\caption{}\label{subfig:3}
\end{subfigure}
\begin{subfigure}[b]{.135\linewidth}
\centering
\begin{tikzpicture}
\node (corner0) at (0,1.73) [ext-vert] {};
\node (corner1) at (-1,0) [ext-vert] {};
\node (corner2) at (1,0) [ext-vert] {};
\draw (corner0) -- (corner1) -- (corner2) -- (corner0);
\end{tikzpicture}
\caption{}
\end{subfigure}
\begin{subfigure}[b]{.135\linewidth}
\centering
\begin{tikzpicture}
\node (corner0) at (0,1.73) [ext-vert] {};
\node (corner1) at (-1,0) [ext-vert] {};
\node (corner2) at (1,0) [ext-vert] {};
\draw (corner0) -- (corner1) -- (corner2);
\end{tikzpicture}
\caption{}\label{subfig:5}
\end{subfigure}
\begin{subfigure}[b]{.135\linewidth}
\centering
\begin{tikzpicture}
\node (corner0) at (0,1.73) [ext-vert] {};
\node (corner1) at (-1,0) [ext-vert] {};
\node (corner2) at (1,0) [ext-vert] {};
\draw (corner0) -- (corner1);
\end{tikzpicture}
\caption{}\label{subfig:6}
\end{subfigure}
\begin{subfigure}[b]{.135\linewidth}
\centering
\begin{tikzpicture}
\node (corner0) at (0,1.73) [ext-vert] {};
\node (corner1) at (-1,0) [ext-vert] {};
\node (corner2) at (1,0) [ext-vert] {};
\end{tikzpicture}
\caption{}
\end{subfigure}
\caption{Configurations of triples of vertices; double strike edges represent equality}\label{fig:3-vert-conf}
\end{figure}

Note that the configurations~\ref{subfig:2},~\ref{subfig:3},~\ref{subfig:5}, and~\ref{subfig:6} occur with multiplicity $3$ because we are considering ordered triples of vertices.
\end{proof}

Let us next get other more combinatorial reductions under the assumption $\dim P^X_{2,+} = 3$.
We can exclude parallel edges with the same endpoints in $\Gamma$, which would correspond to the Schur product. (Having a simple graph $X$ also leads to this simplification.)

When $\dim P^X_{2,+} = 3$ happens, we can exclude the inner vertices of valency $2$, because that corresponds to $A^2$ which would be in the linear span of $A$ (that vertex removed and the edges joined), the matrix of all $1$'s (the vertex and the edges removed), and $I_N$ (the vertex removed, its neighbors identified).

\begin{proposition}
When the Yang--Baxter type relations fail in $P^X$, we can exclude the inner vertices of valency $3$ from $\Gamma$.
\end{proposition}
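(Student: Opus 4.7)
The plan is to localize around a valency-$3$ inner vertex. By planar surgery, cutting out a small disk around such a vertex $w$ (connected to three endpoints $a, b, c$) exposes the \emph{star element}
\[
T \in P^X_{3,+}, \qquad T^{x_0, x_1, x_2} = \sum_w A_{x_0, w} A_{x_1, w} A_{x_2, w},
\]
inserted into the remainder of $\Gamma$. The claim therefore reduces to showing that $T$ lies in the subspace $V \subseteq P^X_{3,+}$ spanned by $a_{\Gamma'}$ for planar graphs $\Gamma'$ on three external vertices with no valency-$3$ inner vertex (and, by the previous reduction, no valency-$2$ inner vertex either, so effectively no inner vertex at all).

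Next I would enumerate $V$ and apply a dimension count. The simple planar graphs on three external vertices, stratified by the number of single-strike edges, come in fifteen types (five with zero edges, six with one, three with two, and one with three, namely the triangle on externals), matching the $15$ orbit configurations of Figure~\ref{fig:3-vert-conf}. Thus $V$ admits at most $15$ generators, and combined with Proposition~\ref{prop:d2d3} we have $\dim V \le \dim P^X_{3,+} \le 15$.

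The heart of the proof is to show $T \in V$. I would identify $T$ with the $3$-$S$ element of Figure~\ref{fig:YB-elements} that has a valency-$3$ inner vertex, while the other YB element corresponds (after planar surgery) to the triangle-on-externals, which already lies in $V$. When the Yang--Baxter type relation holds, it directly yields $T \equiv (\text{element of } V) \pmod{V_{\le 2}}$, where $V_{\le 2}\subseteq V$ is the span of simple graphs with at most two single-strike edges, giving $T\in V$. When it fails, the two $3$-$S$ elements are linearly independent modulo $V_{\le 2}$; the bound $\dim P^X_{3,+} \le 15$, together with the fact that the fifteen simple graphs would otherwise span a $15$-dimensional subspace, then forces additional linear relations among the $V_{\le 2}$ generators. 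These relations free up room for the star to collapse into $V$, so $T\in V$ in this case as well. Re-inserting the resulting expression for $T$ via planar surgery rewrites $a_\Gamma$ as a linear combination of $a_{\Gamma'}$ with strictly fewer valency-$3$ inner vertices, and iteration finishes the argument.

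The main obstacle is the YB-failure branch of the third paragraph: one must carry out the dimension accounting carefully to confirm that the two nonzero directions contributed by the failure of YB (the star and triangle classes modulo $V_{\le 2}$) are absorbed by collapses elsewhere among the fifteen simple-graph generators, so that $T$ ends up inside $V$ rather than spanning a new direction transverse to it. This requires identifying precisely which simple graphs on three external vertices become linearly dependent as a consequence of the YB failure and checking that the resulting relation can be solved for $T$.
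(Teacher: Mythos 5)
Your overall strategy --- localize to the star element $a_{\Gamma_0}$ of Figure~\ref{fig:triv-inn-vert}, express it in terms of inner-vertex-free diagrams, and re-insert by planar surgery --- is the same as the paper's, but the two steps carrying the actual content both have gaps. The membership you set out to prove, namely that $T=a_{\Gamma_0}$ lies in the span $V$ of all fifteen inner-vertex-free configurations \emph{including} the triangle on the external vertices, is automatic from $3$-transitivity: $T$ is $\Aut(X)$-invariant, hence lies in $P^{\Aut(X)}_{3,+}$, and that space is spanned by these fifteen elements. That the Yang--Baxter hypothesis plays no role in your target statement should have been a warning. What the paper extracts from the \emph{failure} of the Yang--Baxter relation is the stronger fact that $a_{\Gamma_0}$ lies in the span of the fourteen configurations \emph{other than} the triangle: in the graph planar algebra one of the two diagrams of Figure~\ref{fig:YB-elements} is the star and the other is the triangle, $3$-point regularity writes the star as $\lambda\cdot(\text{triangle})$ plus a combination of the fourteen lower-order elements, and the failure of the relation is what eliminates the triangle term. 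Your third paragraph has this inverted: failure of the relation does not make the two order-three elements ``linearly independent modulo $V_{\le 2}$'' (the operative case is rather that the star becomes \emph{dependent} on the lower-order elements), and the sentence about relations ``freeing up room for the star to collapse into $V$'' is not an argument --- as you concede in your closing paragraph, this step is simply not carried out.

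The second gap is your termination measure. Once the triangle is allowed in the expansion of $T$, the surgery at $v_i$ removes three single-strike edges and can add three back, so the edge count need not drop; and the number of valency-$3$ inner vertices need not drop either, because an inner neighbour of $v_i$ of valency $4$ loses its edge to $v_i$ and, if the configuration assigns it no new edge, becomes a fresh valency-$3$ inner vertex. (Your iteration can be salvaged by inducting on the total number of inner vertices, which does strictly decrease at each surgery.) The exclusion of the triangle is precisely what makes the paper's induction clean: each surgery then replaces three single-strike edges by at most two, so the number of single-strike edges strictly decreases.
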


\begin{proof}
Suppose that $\Gamma$ has $k$ external vertices and an internal vertex $v_i$ of valency $3$.
We claim that $a_\Gamma$ is in the linear span of $a_{\Gamma'}$ with graphs $\Gamma'$ with $k$ external vertices that have fewer single strike edges.

Let $\Gamma_0$ be the graph with three outer vertices and one inner vertex of valency $3$, see Figure~\ref{fig:triv-inn-vert}.
\begin{figure}[h]
\begin{tikzpicture}
\node (corner0) at (0,1.73) [ext-vert] {};
\node (corner1) at (-1,0) [ext-vert] {};
\node (corner2) at (1,0) [ext-vert] {};
\node (inn_v0) at (0,0.58) [inn-vert] {};
\draw (corner0) -- (inn_v0);
\draw (corner1) -- (inn_v0);
\draw (corner2) -- (inn_v0);
\end{tikzpicture}
\caption{$\Gamma_0$}\label{fig:triv-inn-vert}
\end{figure}
By the failure of the Yang--Baxter type equation, the element $a_{\Gamma_0}$ is contained in the linear span of $a_{\Gamma''}$ with graphs $\Gamma''$ with three external vertices and no internal vertex, which are not the triangle on external vertices.

Suppose that $v'_j$ ($0 \le j < 2$) are connected to $v_i$ in $\Gamma$.
Up to the linear combination (and planar calculus), $a_\Gamma$ is in the linear span of $a_{\Gamma'}$ where $\Gamma'$ is obtained from $\Gamma$ by removing $v_i$, identifying some of the $v'_j$ (corresponding to double strike edges in $\Gamma''$), and adding at most two single strike edges between some of the $v'_j$ (corresponding to single strike edges in $\Gamma''$).
\end{proof}

\subsection{Detecting the lack of quantum symmetry}

By the previous section, to see that a $3$-transitive graph $X$ does not have quantum symmetry, we should use the graphs $\Gamma$ with inner vertices having valency $3$ or more to produce elements of $P^X_{4,+}$ to produce the flip element, or equivalently, establish $\dim P^X_{4,+} = \dim P^{\Aut(X)}_{4,+}$.
We achieve this by computer algebra, and explained in Appendix~\ref{sec:appendix}.
The linear equation for finding the flip is handled as linear algebra with rational coefficients, and the solutions are exact.

In the examples below, we write $a_j$ for the element $a_\Gamma \in P^X_{4,+}$ with $\Gamma = \Gamma_j$ in Table~\ref{tab:graphs}.

\begin{example}
In Theorem~\ref{thm:orth-pol-graph-no-q-sym}, we checked that the graph $X = \rO^-(6, 2)$ does not have quantum symmetry.
Concretely, the element
\begin{multline*}
2 a_{0} +  a_{1} +  a_{2} +  a_{3} +  a_{4} + 2 a_{5} + 2 a_{6} + \frac{3}{2} a_{7} + \frac{3}{2} a_{8} + \frac{3}{2} a_{9} + \frac{3}{2} a_{10} + 3 a_{11} - \frac{1}{4} a_{12} - \frac{1}{4} a_{13} - 2 a_{14} - 2 a_{15} - 2 a_{16} - 2 a_{17}\\
-  a_{18} -  a_{19} -  a_{20} -  a_{21} -  a_{22} -  a_{23} -  a_{24} -  a_{25} -  a_{26} -  a_{27} -  a_{28} -  a_{29} -  a_{30} -  a_{31} - \frac{3}{2} a_{32} - \frac{3}{2} a_{33} - \frac{3}{2} a_{34} - \frac{3}{2} a_{35} -  a_{36}\\
-  a_{37} -  a_{38} -  a_{39} + 2 a_{42} + 2 a_{43} + 2 a_{44} + 2 a_{45} +  a_{46} +  a_{47} +  a_{48} +  a_{49} +  a_{50} +  a_{51} +  a_{52} +  a_{53} + 2 a_{54} + 2 a_{55} +  a_{56} +  a_{57}\\
+  a_{58} +  a_{59} +  a_{60} +  a_{61} +  a_{62} +  a_{63} +  a_{64} +  a_{65} +  a_{66} +  a_{67} +  a_{68} +  a_{69} -  a_{70} -  a_{71} -  a_{72} -  a_{73} - 2 a_{74} - 2 a_{75} - 2 a_{76} - 2 a_{77}\\
-  a_{78} -  a_{79} -  a_{80} -  a_{81} -  a_{82} -  a_{83} -  a_{84} -  a_{85} -  a_{86} -  a_{87} -  a_{88} -  a_{89} +  a_{90} +  a_{91} +  a_{92} +  a_{93} +  a_{94} +  a_{95} +  a_{96} +  a_{97}\\
+ 2 a_{98} -  a_{99} - \frac{1}{2} a_{100} - \frac{1}{2} a_{101} +  a_{102} +  a_{103} +  a_{104} +  a_{105} - \frac{3}{2} a_{106} - \frac{3}{2} a_{107} - \frac{3}{2} a_{108} - \frac{3}{2} a_{109} -  a_{110} -  a_{111} + 6 a_{112}\\
- \frac{1}{2} a_{116} - \frac{1}{2} a_{117} + \frac{1}{4} a_{195}
\end{multline*}
represents the flip in $P^X_{4,+}$.
\end{example}

\begin{theorem}\label{thm:McL-no-q-symm}
Let $X$ be the McLaughlin graph or $\rO^-(6, 3)$.
Then $P^X_{4,+}$ contains the flip.
\end{theorem}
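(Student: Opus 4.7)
The plan is to imitate the strategy used in Example~5.8 for $\rO^-(6,2)$: present the flip as an explicit rational linear combination of the combinatorial elements $a_\Gamma \in P^X_{4,+}$ from Section~\ref{sec:comb-pres} and verify the equality via exact linear algebra over $\Q$. Since $3$-transitivity of $X$ gives $\dim P^X_{2,+} = 3$, the reductions stated after Proposition~\ref{prop:d2d3} (no parallel edges, no inner vertex of valency $2$, and once one checks the Yang--Baxter relations fail for $X$, also no inner vertex of valency $3$) drastically cut down the list of graphs that need to be considered.

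First, I would enumerate a sufficiently rich family of planar graphs $\Gamma$ with four external vertices and inner vertices of valency $\ge 4$, together with the smaller graphs obtained by allowing double-strike edges between external vertices. For each such $\Gamma$, the element $a_\Gamma$ is computed from formula~\eqref{eq:a-Gamma-element}: one counts embeddings of the single-strike subgraph of $\Gamma$ into $X$ respecting the labeling $v^e_i \mapsto x_i$. These counts depend only on the $\Aut(X)$-orbit of $(x_0,x_1,x_2,x_3)$, so the problem is reduced to a finite-dimensional linear system indexed by orbits of $\Aut(X)$ on $4$-tuples.

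Second, one checks that the Yang--Baxter type relations fail for both $X = \rO^-(6,3)$ and $X$ the McLaughlin graph, so that the valency-$3$ reduction is available. This can be done by computing $\dim P^X_{3,+}$ directly (or by exhibiting a pair of configurations in Figure~\ref{fig:YB-elements} whose difference is nonzero in $P^X_{3,+}$). With that in hand, one assembles the rational linear system ``find $(\lambda_\Gamma)$ such that $\sum_\Gamma \lambda_\Gamma a_\Gamma$ equals the flip,'' and solves it exactly using the computer-algebra setup of Appendix~\ref{sec:appendix}. A successful solution is the desired certificate and, being an explicit finite rational expression, can be independently verified orbit-by-orbit.

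The principal obstacle is combinatorial completeness rather than any single hard step: one must ensure the enumerated list of graphs $\Gamma$ is generous enough that the span of $\{a_\Gamma\}$ already contains the flip, since for $\rO^-(6,3)$ and the McLaughlin graph the number of $\Aut(X)$-orbits on $4$-tuples is substantially larger than for $\rO^-(6,2)$, so a priori more generators are required. Beyond that, the verification is mechanical: the coefficients $c_\Gamma^{x_0,x_1,x_2,x_3}$ are integer subgraph counts, the linear algebra is over $\Q$, and the final identity is checked by equating coefficients across the finitely many orbits.
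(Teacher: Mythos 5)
Your proposal is correct and follows essentially the same route as the paper: both express the flip as an explicit rational linear combination of the combinatorial elements $a_\Gamma$ found by computer search, using the reductions of Section~\ref{sec:comb-pres} to limit the graphs considered. The only cosmetic difference is how completeness is certified — the paper checks that the span of the computed $a_\Gamma$ reaches the a priori upper bound $\dim P^{\Aut(X)}_{4,+}$ ($128$ for McLaughlin, $134$ for $\rO^-(6,3)$), which settles your worry about whether the list of graphs is rich enough, whereas you propose to solve directly for the flip, which also suffices once a solution is found.
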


\begin{proof}
The dimension of $P^{\Aut(X)}_{4,+}$ is $128$ for the McLaughlin graph, and $134$ for the $\rO^-(6, 3)$.
We find enough diagrammatic elements to achieve this dimension, and also find an explicit formula for the flip element.

For the McLaughlin graph, the elements $a_i$ up to $i = 142$, together with the ones for $i = 195$, $202$, and $203$ span a $128$-dimensional space inside $P^X_{4,+}$, implying the claim.
Concretely, the element
\begin{multline*}
\frac{12865}{729} a_{0} - \frac{1753}{729} a_{1} - \frac{1753}{729} a_{2} - \frac{1753}{729} a_{3} - \frac{1753}{729} a_{4} + \frac{9301}{243} a_{5} + \frac{9301}{243} a_{6} + \frac{1760}{81} a_{7} + \frac{1760}{81} a_{8} + \frac{1760}{81} a_{9} + \frac{1760}{81} a_{10}\\
- \frac{37804}{243} a_{11} - \frac{13657}{729} a_{12} - \frac{13657}{729} a_{13} - \frac{12457}{1458} a_{14} - \frac{12457}{1458} a_{15} - \frac{12457}{1458} a_{16} - \frac{12457}{1458} a_{17} - \frac{4016}{243} a_{18} - \frac{4016}{243} a_{19}\\
+ \frac{79}{81} a_{20} + \frac{79}{81} a_{21} + \frac{79}{81} a_{22} + \frac{79}{81} a_{23} + \frac{271}{729} a_{24} + \frac{271}{729} a_{25} + \frac{271}{729} a_{26} + \frac{271}{729} a_{27} + \frac{1643}{1458} a_{28} + \frac{1643}{1458} a_{29} + \frac{1643}{1458} a_{30}\\
+ \frac{1643}{1458} a_{31} - \frac{17728}{729} a_{32} - \frac{17368}{729} a_{33} - \frac{17368}{729} a_{34} - \frac{17728}{729} a_{35} - \frac{2230}{729} a_{36} - \frac{50}{27} a_{37} - \frac{50}{27} a_{38} - \frac{2230}{729} a_{39} + \frac{3628}{243} a_{40}\\
+ \frac{3628}{243} a_{41} + \frac{1019}{243} a_{42} + \frac{1019}{243} a_{43} + \frac{1817}{729} a_{44} + \frac{1817}{729} a_{45} + \frac{11117}{1458} a_{46} + \frac{11117}{1458} a_{47} + \frac{11117}{1458} a_{48} + \frac{11117}{1458} a_{49}\\
+ \frac{8797}{1458} a_{50} + \frac{8797}{1458} a_{51} + \frac{8797}{1458} a_{52} + \frac{8797}{1458} a_{53} + \frac{2680}{729} a_{54} + \frac{2680}{729} a_{55} - \frac{2441}{729} a_{56} - \frac{2441}{729} a_{57} - \frac{2441}{729} a_{58} - \frac{2441}{729} a_{59}\\
- \frac{3359}{1458} a_{60} - \frac{3359}{1458} a_{61} - \frac{2239}{1458} a_{62} - \frac{2239}{1458} a_{63} - \frac{151}{162} a_{64} - \frac{2479}{1458} a_{65} - \frac{2479}{1458} a_{66} - \frac{151}{162} a_{67} + \frac{1298}{81} a_{68} + \frac{1298}{81} a_{69}\\
- \frac{659}{243} a_{70} - \frac{719}{243} a_{71} - \frac{719}{243} a_{72} - \frac{659}{243} a_{73} - \frac{611}{729} a_{74} + \frac{79}{729} a_{75} - \frac{611}{729} a_{76} - \frac{1301}{729} a_{77} - \frac{2933}{1458} a_{78} - \frac{4733}{1458} a_{79} - \frac{4733}{1458} a_{80}\\
- \frac{2933}{1458} a_{81} + \frac{1745}{729} a_{82} + \frac{2185}{729} a_{83} + \frac{1745}{729} a_{84} + \frac{145}{81} a_{85} - \frac{5333}{2916} a_{86} - \frac{5333}{2916} a_{87} - \frac{319}{108} a_{88} - \frac{319}{108} a_{89} + \frac{604}{729} a_{90}\\
+ \frac{464}{729} a_{91} + \frac{874}{729} a_{92} + \frac{338}{243} a_{93} + \frac{764}{729} a_{94} + \frac{118}{243} a_{95} + \frac{494}{729} a_{96} + \frac{904}{729} a_{97} - \frac{523}{1458} a_{98} - \frac{3433}{2916} a_{99} - \frac{7}{27} a_{100} - \frac{7}{27} a_{101}\\
+ \frac{248}{729} a_{102} + \frac{248}{729} a_{103} + \frac{248}{729} a_{104} + \frac{248}{729} a_{105} - \frac{1}{6} a_{106} - \frac{1}{6} a_{107} - \frac{103}{1458} a_{108} - \frac{103}{1458} a_{109} - \frac{71}{1458} a_{110} - \frac{71}{1458} a_{111}\\
+ \frac{10}{243} a_{112} + \frac{10}{729} a_{113} - \frac{10}{729} a_{114} + \frac{10}{729} a_{115} - \frac{85}{2916} a_{116} - \frac{85}{2916} a_{117} + \frac{10}{729} a_{119} + \frac{10}{729} a_{120} + \frac{10}{729} a_{121} + \frac{10}{729} a_{122}\\
- \frac{10}{729} a_{135} - \frac{10}{729} a_{136} + \frac{1}{2916} a_{195}
\end{multline*}
represents the flip.

For $X = \rO^-(6, 3)$, the elements $a_i$ up to $i = 221$ span a subspace of dimension $134$.
The flip is represented by a linear combination of the $a_i$ with coefficients as in Table~\ref{table:coeff-flip}.
\end{proof}

\section{Quantum graphs}\label{sec:quant-graphs}

Our last goal is to understand the quantum groups $\Aut^+(X)$ for $X = \VO^\epsilon(2 k, 2)$.
We are going to show that they are monoidally equivalent to $\PO(2^k)$ or $\PSp(2^k)$.
We do this by showing that $X$ is quantum isomorphic to quantum graphs on $M_{2^k}(\bC)$ on which these compact groups act as the quantum automorphism groups.

\subsection{2-categorical approach}

A `concrete' \emph{directed quantum graph} $X$ is given by a finite dimensional C$^*$-algebra endowed with a faithful normal state, $(B, \phi)$, (Frobenius C$^*$-algebra), and a linear map $A_X \colon B \to B$ such that
\begin{equation}\label{eq:idempot-Schur-prod}
m_B (A_X \otimes A_X) m_B^* = A_X,
\end{equation}
see~\cite{MR3849575}.
This structure can also be represented by a $B$-bimodule map $P_X\colon B \otimes B \to B \otimes B$ satisfying $P_X^2 = P_X$~\cite{MR3849575}*{Theorem 7.7}.

Let $(\cC_{i j})_{i, j = 0, 1}$ be a rigid C$^*$-$2$-category with two $0$-cells, $0$ and $1$.
Take $M \in \cC_{0 1}$, and consider the C$^*$-Frobenius algebra object $Q = M \otimes \bar M$ in $\cC = \cC_{0 0}$ whose structures are induced by a standard solution $(R_M, \bar R_M)$ for $M$.
We define a \emph{$2$-categorical directed quantum graph} $X$ on $Q$ to be $(Q, A_X)$, where $A_X \in \End_{\cC}(Q)$ satisfies the condition~\eqref{eq:idempot-Schur-prod} with $Q$ in place of $B$.
Again this structure can be equivalently given by an idempotent $P_X \in \End_{\hat\cC}(\hat Q)$ (`edge projection'), where $\hat \cC = \cC_{1 1}$ and $\hat Q = \bar M \otimes M$.

To obtain a concrete quantum graph from a $2$-categorical one, we need the following:
\begin{itemize}
\item a fiber functor $F \colon \cC \to \Hilbf$; and
\item a functor of left $\cC$-module categories from $\cC_{0 1} \simeq \rmodc{Q}$ to $\rmodc{B}$, again denoted by $F$, where $B = F(Q)$.
\end{itemize}
Since the $2$-categorical structure of $(\cC_{i j})_{i, j}$ is completely determined by $\cC$ and $Q$, these give rise to C$^*$-functors
\begin{align*}
\hat \cC &\to \bmodc{B},&
\cC_{1 0} \to \lmodc{B}
\end{align*}
(all denoted by $F$), which together form an extension to a functor of $2$-categories.
Then $B$ becomes a Frobenius C$^*$-algebra with the functional $\phi = F(\bar R_M^*)$, and we obtain a quantum graph $F(X)$ on $(B, \phi)$ given by $A_{F(X)} = F(A_X)$, such that $P_{F(X)} = F(P_X)$.

Let $G$ be the compact quantum group by the Tannaka--Krein duality from $(\cC, F)$.
By construction, $G$ is a quantum subgroup of the quantum automorphism group $\Aut^+(X)$.

\subsection{Application to affine orthogonal polar graphs}

Let $V = \bC^{2^k}$, and put
\[
B_k = \End_{\bC}(V) \simeq M_{2^k}(\bC)
\]
endowed with the tracial state $\phi = \tr_{2^k}$.
Consider the bilinear form $\Phi\colon V \times V \to \bC$ characterized by $\Phi(e_i, e_j) = \delta_{i, j}$ for the standard basis $(e_i)_{0 \le i < 2^k}$, and use this to identify $B_k$ with $V \otimes V$.
Thus, the matrix unit $e_{i j}$ corresponds to $e_i \otimes e_j$.

Consider the flip map on $V \otimes V$.
(This corresponds to the transpose map of $B_k$.)
Its eigendecomposition gives
\[
V \otimes V \simeq \Sym^2(V) \oplus \medwedge^2(V).
\]
Let $S_k = S^s_k$ be the orthogonal complement of the invariant vector $v^s_k = \sum_i e_i \otimes e_i$ (identified with $1_A$) in $\Sym^2(V)$, and let $P^s_k$ denote the orthogonal projection of $V \otimes V$ to $S_k$.

Consider the rigid $2$-category
\begin{align*}
\cC^{\rO}_{0 0} &= \cC^{\rO}_{1 1} = \Rep \PO(2^k),&
\cC^{\rO}_{0 1} &= \cC^{\rO}_{1 0} = (\Rep \rO(2^k))_1,
\end{align*}
where $(\Rep \rO(2^k))_1$ is the odd part of $\Rep \rO(2^k)$: the subcategory of representations on which the central element $-I_{2^k} \in \rO(2^k)$ acts by the natural character.
Then $V$ represents an object in $\cC^{\rO}_{0 1}$, hence we obtain a $2$-categorical quantum graph $X^s_k$ on $Q = V \otimes V$ whose edge projection is $P^s_k$.
Let $A^s_k$ denote the corresponding adjacency operator.

The canonical fiber functor $F_{\PO} \colon \cC^{\rO}_{0 0} \to \Hilbf$ admits a compatible module category functor
\[
\cC^{\rO}_{0 1} \to \rmodc{B_k}, \quad W \mapsto W \otimes V.
\]
Using these we obtain a quantum graph $F_{\PO}(X^s_k)$ on $(B_k, \phi)$.

We are going to show the following.

\begin{theorem}\label{thm:aff-pol-plus-q-iso-orth}
The graph $\VO^+(2 k, 2)$ is quantum isomorphic to $F_{\PO}(X^s_k)$.
\end{theorem}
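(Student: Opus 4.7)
The plan is to construct the quantum isomorphism via the $2$-categorical machinery by producing a second module-category realization $F_{\mathrm{cl}}$ of $\cC^{\rO}$ under which $X^s_k$ specializes to the classical graph $\VO^+(2k,2)$, then invoking the general principle that two concrete quantum graphs obtained from a common $2$-categorical quantum graph through different module-category realizations are quantum isomorphic (via the bi-Galois object built from the pair of fiber functors $F_{\PO}$ and $F_{\mathrm{cl}}$ by Tannaka--Krein duality). This parallels the approach used for the Clebsch graph $\VO^-(4,2)$ earlier in the paper.

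The functor $F_{\mathrm{cl}}$ comes from a Clifford/Pauli construction. Take the $2k$ anticommuting self-adjoint unitary generators $\gamma_1, \dots, \gamma_{2k}$ of the Clifford algebra $\mathrm{Cl}(2k)$ of the positive-definite form on $\bR^{2k}$ acting on spinors $V = \bC^{2^k}$; together with $-1$ they generate the extraspecial $2$-group $\tilde E = 2^{1+2k}_+$ of plus type, a central $\bZ/2$-extension of $\bF_2^{2k}$. The resulting representation realizes an embedding $\tilde E \hookrightarrow \Pin(2k)$ covering $\bF_2^{2k} \hookrightarrow \PO(2^k)$, and restriction of representations along these embeddings defines a compatible pair
\begin{align*}
F_{\mathrm{cl}} \colon \cC^{\rO}_{00} = \Rep \PO(2^k) &\to \Rep \bF_2^{2k}, &
F_{\mathrm{cl}} \colon \cC^{\rO}_{01} = (\Rep \rO(2^k))_1 &\to \Rep \tilde E.
\end{align*}
The Pauli-basis decomposition $M_{2^k}(\bC) = \bigoplus_{v \in \bF_2^{2k}} \bC \gamma_v$ identifies $F_{\mathrm{cl}}(Q)$ with the commutative $\bF_2^{2k}$-graded algebra $\bC^{\bF_2^{2k}}$, so that $F_{\mathrm{cl}}(X^s_k)$ is automatically a classical Cayley graph on $\bF_2^{2k}$.

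It remains to identify this Cayley graph with $\VO^+(2k,2)$ by computing its connection set. Since $A^s_k$ corresponds at the $2$-categorical level to the projection $P^s_k$ onto the traceless symmetric part $S^s_k \subset V \otimes V$, the vertex $v \in \bF_2^{2k}$ is connected to the origin in $F_{\mathrm{cl}}(X^s_k)$ precisely when $\gamma_v$ is symmetric as a matrix on $V$ and $v \ne 0$. Symmetry of $\gamma_v$ is governed by a natural quadratic form $q$ on $\bF_2^{2k}$ attached to the Pauli representation, whose polarization recovers the symplectic commutation form of the Pauli operators; one shows $\gamma_v$ is symmetric iff $q(v) = 0$, and positive-definiteness of the initial Clifford form forces $q$ to have Witt index $k$, hence plus type. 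Matching this quadric with the defining presentation of $\VO^+(2k,2)$ completes the identification. The main obstacles are this last combinatorial/representation-theoretic bookkeeping step (verifying the Pauli-symmetry condition picks out exactly the plus-type quadric of the correct Witt index), together with making the Tannaka--Krein bi-Galois argument precise in the module-category setting relating $F_{\PO}$ and $F_{\mathrm{cl}}$.
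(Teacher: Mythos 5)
Your overall strategy is the paper's own, read in the opposite direction: both arguments exhibit the Cayley graph $\VO^+(2k,2)$ on $C(\bF_2^{2k})$ and the quantum graph $F_{\PO}(X^s_k)$ on $M_{2^k}(\bC)$ as two concrete realizations of a single $2$-categorical quantum graph, the bridge being that $M_{2^k}(\bC)$ with the conjugation action of the Pauli group is a cocycle twist of $C(\bF_2^{2k})$ with the translation action. Your identification of the connection set --- $\gamma_v \in \Sym^2(V)$ iff $Q_h(v)=0$, which indeed works out in the tensor Pauli basis $\bigotimes_j X^{a_j}Z^{b_j}$, where a factor is antisymmetric exactly when $a_j=b_j=1$, so that symmetry is governed by $\sum_j a_j b_j$, the plus-type form --- is correct and is a pleasant direct substitute for the paper's induction on $k$ (base case $k=1$ via the classification of quantum graphs on $M_2(\bC)$, inductive step by matching the recursion $Y_{k+1}=(Y_k\times Y_1)\cup(Y_k\times\{0\})\cup(\{0\}\times Y_1)\cup(Z_k\times Z_1)$ against $S_{k+1}=S_k\otimes S_1\oplus S_k\otimes \bC v^s_1\oplus \bC v^s_k\otimes S_1\oplus\medwedge^2\otimes\medwedge^2$).

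There is, however, a genuine gap at the crux. Restriction along $\bF_2^{2k}\hookrightarrow\PO(2^k)$ does not change the underlying algebra: $F_{\mathrm{cl}}(Q)$ is still $M_{2^k}(\bC)$, merely remembered together with its Pauli grading. As a graded algebra this is the \emph{twisted} group algebra $C^*_\sigma\bigl(\widehat{\bF_2^{2k}}\bigr)$ for a nontrivial $2$-cocycle $\sigma$ (the signs in $\gamma_v\gamma_w=\pm\gamma_{v+w}$ do not trivialize --- the algebra is simple and noncommutative), so your assertion that the Pauli decomposition ``identifies $F_{\mathrm{cl}}(Q)$ with the commutative $\bF_2^{2k}$-graded algebra $\bC^{\bF_2^{2k}}$'' is false as stated, and with it the claim that $F_{\mathrm{cl}}(X^s_k)$ is ``automatically a classical Cayley graph.'' To land on a commutative algebra you must compose the restriction with the fiber functor on $\Rep\bF_2^{2k}$ that untwists by $\sigma$ --- and that untwisting \emph{is} the quantum isomorphism you are trying to build, not an innocuous bookkeeping step. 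The paper makes exactly this move explicit by writing down the cocycle $\omega_k$ with $F_{\omega}(C(\Gamma))=C^*_\omega(\hat\Gamma)\cong M_{2^k}(\bC)$ and checking compatibility with the module-category structure on $\cC_{01}$. Once you insert that untwisting functor (and verify the compatibility of the two module-functor structures on $\cC^{\rO}_{01}$), your argument closes and coincides in substance with the paper's proof.
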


\begin{theorem}\label{thm:pso-iso-q-aut}
We have $\PO(2^k) = \Aut^+(F_{\PO}(X^s_k))$.
\end{theorem}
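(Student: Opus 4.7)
The inclusion $\PO(2^k)\subset \Aut^+(F_{\PO}(X^s_k))$ is built into the construction: the $2$-category $\cC^{\rO}_{\bullet,\bullet}$ places $X^s_k$ as a $2$-categorical quantum graph in $\cC^{\rO}_{0 0}=\Rep\PO(2^k)$, and the canonical fiber functor $F_{\PO}$ converts this into an action of $\PO(2^k)$ on $F_{\PO}(X^s_k)$ preserving all of its Frobenius and adjacency data. The substance of the theorem is therefore the reverse inclusion, which I propose to establish via Woronowicz--Tannaka--Krein duality by showing that $\mathcal{D}:=\Rep\Aut^+(F_{\PO}(X^s_k))$ is contained in $\Rep\PO(2^k)$ under the induced tensor functor. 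By the quantum-graph analogue of Proposition~\ref{prop:P-Q-u-gen-t-d}, $\mathcal{D}$ is generated as a rigid C$^*$-tensor category by the Frobenius unit $v\colon\bC\to B_k$, the multiplication $m\colon B_k\otimes B_k\to B_k$, and the adjacency $A^s_k\colon B_k\to B_k$.

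The key idea is to exploit that $B_k=M_{2^k}(\bC)$ is a Q-system of matrix type. Passing to the associated $2$-category~\eqref{eq:2-cat-from-Q-sys} in $\mathcal{D}$, $B_k$ splits as $M\otimes\bar M$ for a distinguished connecting object $M\in\mathcal{D}_{0,1}$ that plays the role of a quantum analogue of the vector representation $V=\bC^{2^k}$. Under this splitting, $v$ and $m$ encode a standard solution of duality for $M$ together with a symmetric self-duality $\bar M\simeq M$ corresponding to the nondegenerate bilinear form $\Phi$, while the construction of $F_{\PO}(X^s_k)$ forces $A^s_k$ to correspond to the orthogonal projection $P^s_k\in\End(M\otimes M)$ onto the symmetric traceless summand. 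Combined with the identity and the singlet projector $R_V R_V^*/\dim(M)$ obtained from $vv^*$, the three mutually orthogonal idempotents span $\End(M\otimes M)$, and the flip decomposes as $\sigma_{M,M}=2P^s_k+2R_VR_V^*/\dim(M)-1$; in particular $\sigma_{M,M}\in\mathcal{D}$. Tensoring and composing these generators along planar diagrams then realizes all Brauer-type morphisms between tensor powers of $M$. Brauer's first fundamental theorem for $\rO(n)$ identifies the resulting rigid C$^*$-tensor category as $\Rep\rO(2^k)$, and its subcategory on which the central $-I_{2^k}$ acts trivially---equivalently, the tensor powers of $M\otimes\bar M=B_k$---coincides with $\Rep\PO(2^k)$. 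Hence $\mathcal{D}\subset\Rep\PO(2^k)$, and together with the first paragraph this forces equality.

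The main obstacle I anticipate is the small-rank regime: for $n=2^k$ the abstract Brauer algebra $B_r(n)$ acts with nontrivial kernel on $V^{\otimes r}$ once $r$ grows, so one must verify that $\mathcal{D}$ does not pick up spurious extra morphisms beyond those realized inside the concrete $2$-category $\cC^{\rO}_{\bullet,\bullet}$. Concretely this reduces to matching $\dim\Hom_\mathcal{D}(\bC,B_k^{\otimes r})$ and $\dim\Hom_{\PO(2^k)}(\bC,V^{\otimes 2r})$: the upper bound follows from the embedding $\mathcal{D}\hookrightarrow\Rep\PO(2^k)$ just established, while the lower bound requires checking that the planar-diagrammatic morphisms built from $v$, $m$, and $P^s_k$ surject onto the image of the Brauer algebra acting on $V^{\otimes 2r}$. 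Carrying out this surjectivity, together with the abstract verification that the connecting object $M$ satisfies exactly the defining duality relations of the $\rO(2^k)$ vector representation with no extra ambient structure, is the crux of the argument.
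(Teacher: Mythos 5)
Your opening computation is exactly the linchpin of the paper's argument too: the flip of $V\otimes V$ equals $2P^s_k+2R_VR_V^*/2^k-\id$ and hence lies in the intertwiner space, and the containment $\Hom_{\mathcal{D}}\subset\Hom_{\PO(2^k)}$ of intertwiner spaces is automatic from $\PO(2^k)\subset\Aut^+(F_{\PO}(X^s_k))$. But the step you yourself flag as the crux --- that the diagrammatic morphisms built from $v$, $m$, $P^s_k$ exhaust the span of all Brauer pairings (including the crossings linking different tensor factors of $B_k$, which live in $\End(\hat Q)$ rather than $\End(Q)$), and that the abstract connecting object $M$ carries a symmetric self-duality realizing $\Phi$ with no extra structure --- is precisely the content of the reverse inclusion, and it is not carried out. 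The conclusion ``$\mathcal{D}\subset\Rep\PO(2^k)$'' of your second paragraph is, as justified there, only the easy containment of hom-spaces; upgrading it to the equality $\Aut^+(F_{\PO}(X^s_k))=\PO(2^k)$ requires exactly the surjectivity you defer to the end. There is also a type issue in passing from $\End(M\otimes\bar M)$ to $\End(M\otimes M)$: the identification $\bar M\simeq M$ is additional data that has to be extracted from the quantum graph, not something $v$ and $m$ alone provide. So as written the proposal is a plan with a genuine gap at its decisive step.

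The paper sidesteps the generation problem entirely. It invokes Theorem~\ref{thm:aff-pol-plus-q-iso-orth} to obtain a monoidal functor $\Rep\Aut^+(\VO^+(2k,2))\to\Rep\PO(2^k)$, observes that this functor hits the flip of $V\otimes V$ (your formula), and concludes that the induced fiber functor realizes the quantum automorphism group as a \emph{usual} compact group $G$: once the flip is an intertwiner in all the relevant positions the category is symmetric and one is in the classical Tannaka--Krein setting. The reverse inclusion then becomes elementary group theory rather than invariant theory: $G$ acts on $M_{2^k}$ by trace-preserving $*$-automorphisms, hence $G\subset\mathrm{PU}(2^k)$, and a class $\mathrm{Ad}(u)$ whose action $u\otimes\bar u$ on $V\otimes V$ commutes with the flip must have a representative preserving $\Phi$, forcing $G\subset\PO(2^k)$. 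If you wish to complete your route instead, you must actually prove the generation statement for all Brauer diagrams from $v$, $m$, $A^s_k$; this is plausible but is real work that the paper's argument is designed to avoid.
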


\begin{corollary}\label{cor:VO-plus-PO-2-k}
The compact quantum groups $\Aut^+(\VO^+(2 k, 2))$ and $\PO(2^k)$ are monoidally equivalent.
\end{corollary}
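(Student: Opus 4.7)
The plan is to combine the two preceding theorems with the general principle that quantum isomorphic (quantum) graphs have monoidally equivalent quantum automorphism groups. This principle, originally formulated in the setting of classical graphs by Brannan--Chirvasitu--Eifler--Harris--Paulsen--Su--Wasilewski, says that a quantum isomorphism between $X$ and $Y$ produces a bi-Galois object (linking algebra) between $\cO(\Aut^+(X))$ and $\cO(\Aut^+(Y))$, which in turn yields a unitary monoidal equivalence of their representation categories via the Bichon--De Rijdt--Vaes correspondence between bi-Galois objects and monoidal equivalences. The same principle carries over verbatim to the setting of quantum graphs on finite-dimensional C$^*$-algebras, since all the relevant constructions depend only on the tensor-categorical structure underlying the magic unitary relations.

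Concretely, the first step is to invoke Theorem~\ref{thm:aff-pol-plus-q-iso-orth}, which provides a quantum isomorphism between the classical graph $\VO^{+}(2k, 2)$ and the quantum graph $F_{\PO}(X^s_k)$ on $(B_k, \phi)$. The second step is to invoke Theorem~\ref{thm:pso-iso-q-aut}, which identifies $\Aut^+(F_{\PO}(X^s_k))$ with $\PO(2^k)$. Combining these two steps through the quantum-isomorphism-implies-monoidal-equivalence principle gives
\[
\Rep \Aut^+(\VO^{+}(2k, 2)) \simeq \Rep \Aut^+(F_{\PO}(X^s_k)) = \Rep \PO(2^k),
\]
which is exactly the claimed monoidal equivalence.

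The bulk of the work has already been carried out in the two preceding theorems, so the only thing to verify here is that the general principle does indeed apply in the mixed classical/quantum graph setting. The cleanest way to see this is to note that the 2-categorical construction of Section~\ref{sec:quant-graphs} already packages the relevant data: the rigid C$^*$-$2$-category $(\cC^{\rO}_{i j})_{i,j}$ together with the fiber functor $F_{\PO}$ on one side and the obvious fiber functor realizing $\VO^{+}(2k,2)$ on the other side provide two different trivializations of the same $2$-categorical quantum graph $X^s_k$, and the bi-Galois object implementing the monoidal equivalence is precisely the space of intertwiners between these two fiber functors. Thus no additional argument is required beyond assembling the two theorems, and the main conceptual obstacle—namely proving that the $2$-categorical model $X^s_k$ really captures $\VO^{+}(2k,2)$—has already been absorbed into Theorem~\ref{thm:aff-pol-plus-q-iso-orth}.
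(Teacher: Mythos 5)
Your proposal is correct and matches the paper's intended argument: the corollary is stated without a separate proof precisely because it follows from Theorems~\ref{thm:aff-pol-plus-q-iso-orth} and~\ref{thm:pso-iso-q-aut} together with the fact that quantum isomorphism yields a monoidal equivalence of quantum automorphism groups, which in the paper's setup is built into the $2$-categorical framework (the two sides arise as two fiber-functor realizations of the same $2$-categorical quantum graph $X^s_k$). Your identification of the bi-Galois/linking-algebra mechanism as the bridge is exactly the right justification.
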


Let us start with a concrete description of $\VO^+(2 k, 2)$ following~\cite{MR4350112}.
Let $\bF_2 = \{0, 1\}$ be the field with $2$ elements.
The vertex of $\VO^+(2 k, 2)$ set is $\rV(2 k, 2) = \bF_2^{2 k}$, and two vertices
\[
(x_0, y_0, \dots, x_{2 k - 1}, y_{2 k - 1}), 
(x'_0, y'_0, \dots, x'_{2 k - 1}, y'_{2 k - 1}) \quad (x_i, y_i, x'_i, y'_i \in \bF_2)
\]
are connected by an edge if and only if they are different and $\sum_i (x_i - x'_i) (y_i - y'_i) = 0$.
Thus, it is the Cayley graph of the additive group of $\rV(2 k, 2)$ with respect to the subset
\[
Y_k = Y^h_k = \{v = (x_0, y_0, \dots, x_{2 k - 1}, y_{2 k - 1}) \mid v \neq 0, Q_h(v) = 0 \},
\]
where $Q_h$ is the quadratic form on $\rV(2 k, 2)$ given by
\[
Q_h(x_0, y_0, \dots, x_{2 k - 1}, y_{2 k - 1}) = \sum_i x_i y_i.
\]

Again this fits in a $2$-categorical framework, as follows.
Take the commutative group $\Gamma = \Gamma_k = \rV(2 k, 2)$, and let $\hat\Gamma$ be its Pontryagin dual.
(These are of course isomorphic, but we will keep them separate for now.)
Then we have a $2$-category with components
\begin{align*}
\cC_{0 0} &= \lmodc{\Gamma},&
\cC_{0 1} &= \cC_{1 0} = \Hilbf,&
\cC_{1 1} &= \lmodc{\hat\Gamma},
\end{align*}
and the actions $\cC_{0 0} \curvearrowright \cC_{0 1} \curvearrowleft \cC_{1 1}$ are given by the canonical fiber functors.
Let $M \in \cC_{0 1}$ be the object represented by $\bC$.
Then $Q = M \otimes \bar M$ is the algebra $D = C(\Gamma)$ in $\cC = \cC_{0 0}$ with the translation action of $\Gamma$, while $\hat Q = \bar M \otimes M$ is $C(\hat\Gamma)$, again with the translation action of $\hat\Gamma$.

Note that $\hat\cC = \cC_{1 1}$ can be interpreted as the category of $\Gamma$-graded finite dimensional Hilbert spaces.
Then $\hat Q$ is identified with $C^*(\Gamma)$ with the natural grading.
From this we obtain the $2$-categorical quantum graph $X'$ on $Q$ whose edge projection $P'$ is the projection onto the span of $Y_k$ in $\hat Q$.

Let $H$ denote $C^*(\Gamma) = C(\hat\Gamma)$ regarded as a Hopf $*$-algebra, and similarly let $\hat H$ be $C^*(\hat\Gamma) = C(\Gamma)$.
Then $\cC_{1 1} = \rmodc{\hat H}$ is monoidally equivalent to $\Gamma\mhyph\bmodc{D}$ (with the bimodule tensor product) by the functor
\[
\Gamma\mhyph\bmodc{D} \to \rmodc{\hat H}, \quad E \mapsto \delta_{0} E,
\]
where the right $\hat H$-module structure is given by the algebra identification $\hat H = D$.
An inverse functor is given by
\[
\rmodc{\hat H} \to \Gamma\mhyph\bmodc{D}, \quad W \mapsto \oplus_{\gamma \in \Gamma} W.
\]
The left action of $D$ simply corresponds to the above labeling of direct summands, while the right action is the twisting of original right action by the translation by $g$ on the $g$-th summand.
Then we have an equivalent description of the above $2$-category, given by
\begin{align*}
\cC'_{0 0} &= \cC,&
\cC'_{0 1} &= \Gamma\mhyph\rmodc{D},&
\cC'_{1 0} &= \Gamma\mhyph\lmodc{D},&
\cC'_{1 1} &= \Gamma\mhyph\bmodc{D}.
\end{align*}

Now, suppose that $F\colon \cC \to \Hilbf$ is a fiber functor (which corresponds to a $\bT$-valued $2$-cocycle on $\hat\Gamma$).
Combined with the above model $\cC'$, this extends to a functor of $2$-categories such that
\begin{align*}
\cC_{0 1} &\to \rmodc{F(D)},&
\hat\cC &\to \bmodc{F(D)},
\end{align*}
and we obtain a quantum graph on $F(D)$.
When $F$ is the canonical fiber functor of $\cC = \lmodc\Gamma$, the projection $P'' = F(P')$ on $F(\hat Q) = C(\Gamma \times \Gamma)$ recovers the edge projection of $\VO^+(2 k, 2)$.

\begin{proof}[Proof of Theorem~\ref{thm:aff-pol-plus-q-iso-orth}]
Let $\omega = \omega_k$ be the $2$-cocycle on $\hat\Gamma$ given by
\[
\omega_k((\hat x_0, \hat y_0, \dots, \hat x_{2 k - 1}, \hat y_{2 k - 1}), (\hat x'_0, \hat y'_0, \dots, \hat x'_{2 k - 1}, \hat y'_{2 k - 1})) = \exp\biggl(\frac{\pi i}4 \sum_i \hat x_i \hat y'_i - \hat y_i \hat x'_i\biggr).
\]
Let $F_{\omega}$ be the associated fiber functor.
Then the twisted group algebra $C^*_\omega(\hat\Gamma) = F_{\omega}(D)$ is isomorphic to $B_k = M_{2^k}(\bC)$.
We claim that $F_{\PO}(X^s_k)$ is isomorphic to the quantum graph $X'_k$ on $(B_k, \phi)$ whose edge projection is $P'' = F_\omega(P')$.

First consider the case $k = 1$.
Then quantum graph on $X'_1$ on $M_2(\bC)$ is a simple graph with two quantum edges in the sense of~\cite{arXiv:2109.13618}.
The same holds for $F_{\PO}(X^s_1)$, hence these are isomorphic by~\citelist{\cite{arXiv:2109.13618}*{Section 3}\cite{arXiv:2110.09085}}.

The general case follows from this and the fact that the graphs follow the same recursion relation.
To be more precise, observe
\[
Y_{k+1} = (Y_k \times Y_1) \cup (Y_k \times \{0\}) \cup (\{0\} \times Y_1) \cup (Z_k \times Z_1),
\]
with
\[
Z_k = Z^h_k = \{v = (x_0, y_0, \dots, x_{2 k - 1}, y_{2 k - 1}) \mid Q_h(v) = 1 \}.
\]

On one hand, tensor product of edge projections correspond to tensor product of adjacency operators.
On the other, consider the $2$-categorical quantum graph on $Q = D \in \cC$ whose edge projection is the orthogonal projection to the span the unit element in $\hat Q = C^*(\Gamma)$ (the standard trace).
The adjacency operator of this graph is the identity map on $Q$.

Combining these two, the adjacency operators $A'_k = A_{X'_k}$ on $C^*_\omega(\Gamma)$ satisfy
\begin{equation}\label{eq:vo-adj-recurs}
A'_{k+1} = A'_k \otimes A'_1 + A'_k \otimes \id + \id \otimes A'_1 + A''_k \otimes A''_1,
\end{equation}
where $A''_k$ is the adjacency operator corresponding to the projection to the span of $Z_k$ in $C^*(\Gamma)$.
Since we have $\Gamma = \{0\} \coprod Y_k \coprod Z_k$, we have $\id + A'_k + A''_k$ is the adjacency operator for the quantum graph whose edge projection is identity.

Let us check the corresponding recursion relation for $X^s_k$.
The invariant vector $v^s_{k+1}$ can be identified with $v^s_k \otimes v^s_1$ up to rearranging tensor factors.
This implies
\[
S_{k+1} = S_k \otimes S_1 \oplus S_k \otimes \bC v^s_1 \oplus \bC v^s_{k} \otimes S_1 \oplus \medwedge^2(\bC^{2^k}) \otimes \medwedge^2(\bC^{2}).
\]

Consider the quantum graph on $V \otimes V \in \cC^{\PO}_{0 0}$ whose edge projection is the orthogonal projection to $\bC v^s_k \subset V \otimes V \in \cC^{\PO}_{1 1}$ ($R_V R^*_V$ for the duality structure morphism).
The adjacency operator is again identity morphism.

Thus, we have
\[
F_{\PO}(A^s_{k+1}) = F_{\PO}(A^s_k) \otimes F_{\PO}(A^s_1) + F_{\PO}(A^s_k) \otimes \id + \id \otimes F_{\PO}(A^s_1) + F_{\PO}(A^a_k) \otimes F_{\PO}(A^a_1),
\]
where $A^a_k$ is the adjacency operator of the $2$-categorical quantum graph whose edge projection is the orthogonal projection to $\medwedge^2(V) \subset V \otimes V \in \cC^{\PO}_{1 1}$.
Again we note that $\id + A^s_k + A^a_k$ is the adjacency operator for the quantum graph whose edge projection is identity.
From this we obtain the same recursion relation as~\eqref{eq:vo-adj-recurs} for the maps $A^s_k$.
\end{proof}

\begin{proof}[Proof of Theorem~\ref{thm:pso-iso-q-aut}]
We already have a monoidal functor $\Rep \Aut^+(\VO^+(2 k, 2)) \to \Rep \PO(2^k)$ by Theorem~\ref{thm:aff-pol-plus-q-iso-orth}.
This functor hits the flip map of $V \otimes V$.
Thus, the fiber functor on $\Rep \Aut^+(\VO^+(2 k, 2))$ coming from the fiber functor of $\Rep \PO(2^k)$ defines a usual compact group $G$, such that $G = \Aut^+(B_k, T_k)$.
We then have $\PO(2^k)$ as a subgroup of $G$, and the equality $\PO(2^k) = G$ is an immediate consequence of the fact that $G$ preserves the form $\Phi$.
\end{proof}

The comparison between $\Aut^+(\VO^-(2 k, 2))$ and $\PSp(2^k)$ is similar.
This time, we consider the bilinear form $\Psi\colon V \times V \to \bC$ characterized by
\begin{align*}
\Psi(e_{2 i}, e_{2 i + 1}) &= 1,&
\Psi(e_{2 i + 1}, e_{2 i}) &= -1,&
\Psi(e_i, e_j) & = 0 \quad (\absv{i - j} \neq 1)
\end{align*}
for the standard basis $(e_i)_{0 \le i < 2^k}$, and use this to identify $B_k$ with $V \otimes V$.

Let $S_k = S^a_k$ be the orthogonal complement of $v^a_k = \sum_i e_{2i} \otimes e_{2i+1} - e_{2i+1} \otimes e_{2i}$ (identified with $1_{B_k}$) in $\medwedge^2(V)$, and let $P^a_k$ denote the orthogonal projection of $V \otimes V$ to $S_k$.

As before, consider the rigid $2$-category
\begin{align*}
\cC^{\PSp}_{0 0} &= \cC^{\PSp}_{1 1} = \Rep \PSp(2^k),&
\cC^{\PSp}_{0 1} &= \cC^{\PSp}_{1 0} = (\Rep \PSp(2^k))_1.
\end{align*}
Again $V$ represents an object in $\cC^{\PSp}_{0 1}$, and we obtain a quantum graph $X^a_k$ on $Q = V \otimes V \in \cC^{\PSp}_{0 0}$ whose edge projection is $P^a_k$.
We denote the corresponding adjacency operator by $A^a_k$.
Moreover, the canonical fiber functor $F_{\PSp} \colon \cC^{\PSp}_{0 0} \to \Hilbf$ extends to a module category functor
\[
F_{\PSp} \colon \cC^{\PSp}_{0 1} \to \rmodc{B_k}, \quad W \mapsto W \otimes V.
\]
Thus we obtain a quantum graph $F_{\PSp}(X^a_k)$ on $B_k$.
We then have the following analogues of Theorems~\ref{thm:aff-pol-plus-q-iso-orth},~\ref{thm:pso-iso-q-aut}, and Corollary~\ref{cor:VO-plus-PO-2-k}.

\begin{theorem}\label{thm:aff-pol-minus-q-iso-symp}
The graph $\VO^-(2 k, 2)$ is quantum isomorphic to $F_{\PSp}(X^a_k)$.
\end{theorem}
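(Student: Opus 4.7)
Mirroring the proof of Theorem~\ref{thm:aff-pol-plus-q-iso-orth}, the plan is to identify both $\VO^-(2k,2)$ and $F_{\PSp}(X^a_k)$ with quantum graphs on $B_k = M_{2^k}(\bC)$ via appropriate fiber functors and show they coincide. Using the same type of $2$-cocycle $\omega_k$ on $\hat\Gamma_k = \bF_2^{2k}$ as in the orthogonal case (so that it factors as $\omega_{k+1} = \omega_k \boxplus \omega_1$ compatibly with $\hat\Gamma_{k+1} = \hat\Gamma_k \oplus \hat\Gamma_1$), the $2$-categorical Cayley-type quantum graph modelling $\VO^-(2k,2)$ is sent to a quantum graph on $B_k$. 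The claim reduces to matching this against $F_{\PSp}(X^a_k)$ by a base case plus a recursion.

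For the base case $k = 1$: the anisotropic form on $\bF_2^2$ is nowhere zero off the origin, so $\VO^-(2,2)$ is the empty graph on four vertices; on the other side, $\medwedge^2 V = \bC v^a_1$ for $V = \bC^2$, so $S^a_1 = 0$ and $F_{\PSp}(X^a_1)$ is the empty quantum graph on $M_2(\bC)$.

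For the inductive step, on the categorical side decompose $V_{k+1} = V_k \otimes V_1$ with $V_k$ orthogonal ($\Phi_k$) and $V_1$ symplectic ($\Psi_1$), so $\Psi_{k+1} = \Phi_k \otimes \Psi_1$ and $v^a_{k+1}$ identifies with $v^s_k \otimes v^a_1$. From the decomposition $\medwedge^2(V_k \otimes V_1) = \Sym^2 V_k \otimes \medwedge^2 V_1 \oplus \medwedge^2 V_k \otimes \Sym^2 V_1$ and $\dim \medwedge^2 V_1 = 1$, we get
\[
S^a_{k+1} = S^s_k \otimes \bC v^a_1 \oplus \medwedge^2 V_k \otimes \Sym^2 V_1,
\]
hence $F_{\PSp}(A^a_{k+1}) = F_{\PO}(A^s_k) \otimes F_{\PSp}(\id_1) + F_{\PO}(A^a_k) \otimes F_{\PSp}(A^{\mathrm{sym}}_1)$. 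On the Cayley side, the minus-type form on $\rV(2(k+1),2)$ splits as a plus-type form on $\rV(2k,2)$ plus the anisotropic form on $\rV(2,2)$, so
\[
Y^-_{k+1} = (Y^+_k \times \{0\}) \cup (Z^+_k \times (\bF_2^2 \setminus \{0\})),
\]
whence $A^-_{k+1} = A^+_k \otimes \id_1 + A^{Z^+}_k \otimes (A^+_1 + A^{Z^+}_1)$. Theorem~\ref{thm:aff-pol-plus-q-iso-orth} supplies the identifications $A^+_k \leftrightarrow F_{\PO}(A^s_k)$ and $A^{Z^+}_k \leftrightarrow F_{\PO}(A^a_k)$; at level $1$ the Cayley projections $\id_1$ and $\id_1^\perp = A^+_1 + A^{Z^+}_1$ match $F_{\PSp}(\id_1)$ and $F_{\PSp}(A^{\mathrm{sym}}_1)$ respectively (scalars versus traceless part of $M_2(\bC)$). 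Thus both recursions coincide on $B_{k+1} = B_k \otimes B_1$, and induction closes the argument.

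The main obstacle is verifying the compatibility of the two distinct identifications of $M_2(\bC)$ at level $1$: the $\omega_1$-twisted group algebra identification $C^*_{\omega_1}(\hat\Gamma_1) \cong M_2(\bC)$ and the symplectic fiber-functor identification $F_{\PSp}(V_1 \otimes V_1) \cong \End(V_1) = M_2(\bC)$ must respect the scalar versus traceless decomposition. This reduces to a direct computation once $\omega_1$ is written out, namely that the $\omega_1$-twisted $\delta_0$ maps to the scalar $1_{M_2(\bC)}$ while each remaining $\delta_g$ maps to a traceless matrix.
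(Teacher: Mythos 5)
Your proposal is correct and follows essentially the same route as the paper's proof sketch: the same splitting of the elliptic form into a hyperbolic part plus the anisotropic plane, the same recursion $Y^e_{k+1} = Y^h_k\times\{0\}\cup Z^h_k\times Z^e_1$ matched against $S^a_{k+1} = S^s_k\otimes\bC v^a_1\oplus\medwedge^2(\bC^{2^k})\otimes\Sym^2(\bC^2)$, and the same reduction to comparing $Z^e_1$ with $\Sym^2(\bC^2)$ at level one. The only cosmetic difference is that you resolve that last comparison by computing directly that both edge projections are the projection onto the traceless part of $M_2(\bC)$, whereas the paper invokes the uniqueness of the simple quantum graph with three edges on $M_2(\bC)$ --- the same observation in different clothing.
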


\begin{theorem}\label{thm:psp-iso-q-aut}
We have $\PSp(2^k) = \Aut^+(F_{\PSp}(X^a_k))$.
\end{theorem}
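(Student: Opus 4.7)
The plan is to mirror the proof of Theorem~\ref{thm:pso-iso-q-aut}, replacing the orthogonal form $\Phi$ by the symplectic form $\Psi$ throughout. By Theorem~\ref{thm:aff-pol-minus-q-iso-symp}, the monoidal equivalence between $\Aut^+(\VO^-(2k,2))$ and $\Aut^+(F_{\PSp}(X^a_k))$, together with the quantum subgroup inclusion $\PSp(2^k) \subseteq \Aut^+(F_{\PSp}(X^a_k))$, provides a monoidal functor $\Rep \Aut^+(\VO^-(2k,2)) \to \Rep \PSp(2^k)$. The decisive step is to verify that this functor hits the flip on $V \otimes V$.

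To exhibit the flip in the image, I plan to realize the orthogonal projections onto $\Sym^2 V$ and $\medwedge^2 V$ via intertwiners already present. The image contains the identity on $V \otimes V$, the adjacency operator $A^a_k$ associated to the edge projection $P^a_k$ onto $S^a_k$, and the rank-one projection onto $\bC v^a_k \subset \medwedge^2 V$ coming from the normalized symplectic duality morphism $R_V R_V^*$. Since $\medwedge^2 V = S^a_k \oplus \bC v^a_k$, summing the latter two projections yields the projection onto $\medwedge^2 V$; subtracting from the identity yields the projection onto $\Sym^2 V$. Because the flip equals $P_{\Sym^2 V} - P_{\medwedge^2 V}$, it lies in the image.

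With the flip in hand, the remainder adapts the argument of Theorem~\ref{thm:pso-iso-q-aut}. The composed fiber functor $\Rep \Aut^+(\VO^-(2k,2)) \to \Rep \PSp(2^k) \to \Hilbf$ reconstructs, via Tannaka--Krein, a compact quantum group $G$ which coincides with $\Aut^+(F_{\PSp}(X^a_k))$ by uniqueness. Since $V \otimes V$ generates the category and the flip on it is an intertwiner for $G$, the representation category of $G$ is symmetric, hence $G$ is a classical compact group. The inclusion $\PSp(2^k) \subseteq G$ holds by construction, and the reverse inclusion follows because $G$ preserves $\Psi$: the line $\bC v^a_k$ is $G$-invariant and $v^a_k$ corresponds to the unit $1_{B_k}$, which is fixed by any Frobenius algebra automorphism of $B_k$, forcing the restriction of $G$ to $V$ to preserve the symplectic form.

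The main obstacle will be the flip-in-the-image verification. The projection-based argument has the advantage of avoiding the BMW braiding (whose eigenvalues are $\pm i$ rather than the $\pm 1$ of the orthogonal case); however, care is needed with the duality normalization so that $R_V R_V^*$ realizes the projection onto $\bC v^a_k$ with the correct scalar. Everything downstream is a routine transcription of the orthogonal argument.
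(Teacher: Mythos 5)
Your proposal follows the paper's route exactly: the paper proves this theorem by transcribing the proof of Theorem~\ref{thm:pso-iso-q-aut} with $\Psi$ in place of $\Phi$ — the monoidal functor $\Rep \Aut^+(\VO^-(2k,2)) \to \Rep \PSp(2^k)$ coming from Theorem~\ref{thm:aff-pol-minus-q-iso-symp} hits the flip of $V \otimes V$, Tannaka--Krein then produces a classical compact group $G = \Aut^+(F_{\PSp}(X^a_k))$ containing $\PSp(2^k)$, and preservation of $\Psi$ forces equality. Your added justification that the flip lies in the image is sound (the quickest version is the dimension count $\dim P^{\VO^-(2k,2)}_{2,+} = 3 = \dim \End_{\PSp(2^k)}(V^{\otimes 2})$); the only imprecision is that the orthogonal projection onto $S^a_k$ is the edge projection $P^a_k \in \End(\hat Q)$ rather than the adjacency operator $A^a_k \in \End(Q)$, a harmless conflation since these two $3$-dimensional spaces correspond under the Fourier transform and your span argument goes through either way.
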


\begin{corollary}\label{cor:VO-minus-PSp-2-k}
The compact quantum groups $\Aut^+(\VO^-(2 k, 2))$ and $\PSp(2^k)$ are monoidally equivalent.
\end{corollary}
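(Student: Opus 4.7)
The plan is to deduce the corollary immediately from the two preceding theorems together with the standard principle that quantum isomorphic (quantum) graphs have monoidally equivalent quantum automorphism groups. This principle is most cleanly phrased through bi-Galois objects: a quantum isomorphism between quantum graphs $X$ and $Y$ produces a linking C$^*$-algebra which is simultaneously a right Galois object for $\cO(\Aut^+(X))$ and a left Galois object for $\cO(\Aut^+(Y))$, and the associated cotensor construction then gives a unitary monoidal equivalence of the representation categories of the two quantum automorphism groups.

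Concretely, the first step is to invoke Theorem~\ref{thm:aff-pol-minus-q-iso-symp}, which establishes a quantum isomorphism between the classical graph $\VO^-(2k,2)$ and the quantum graph $F_{\PSp}(X^a_k)$ on $(B_k,\phi)$. This yields a unitary monoidal equivalence
\[
\Rep \Aut^+(\VO^-(2k,2)) \simeq \Rep \Aut^+(F_{\PSp}(X^a_k)).
\]
The second step is to apply Theorem~\ref{thm:psp-iso-q-aut}, which identifies $\Aut^+(F_{\PSp}(X^a_k))$ with $\PSp(2^k)$ as compact quantum groups, so that the right-hand side above becomes $\Rep \PSp(2^k)$. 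Composing these two equivalences delivers the claim.

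The only conceptual point to verify is that the formalism of quantum isomorphism used in Theorem~\ref{thm:aff-pol-minus-q-iso-symp} is strong enough to produce a bi-Galois object in the quantum-graph setting, rather than only a weaker statement at the level of the underlying C$^*$-Frobenius algebras. In the $2$-categorical approach used throughout Section~\ref{sec:quant-graphs}, this is automatic: the quantum isomorphism between $\VO^-(2k,2)$ and $F_{\PSp}(X^a_k)$ is realized by switching fiber functors on the same rigid C$^*$-$2$-category (with $\cC^{\PSp}$ on one side and the $(\Gamma,\hat\Gamma)$-based $2$-category with twisted cocycle $\omega_k$ on the other), and changing fiber functors on a rigid C$^*$-tensor category is the archetype of a monoidal equivalence produced by a $2$-cocycle twist. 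Thus no additional work is needed beyond quoting the two theorems, and the main (already-absorbed) obstacle was the proof of Theorem~\ref{thm:aff-pol-minus-q-iso-symp} itself, which required the inductive identification of the adjacency operator for $\VO^-(2k,2)$ with that of $F_{\PSp}(X^a_k)$ via the recursion $Y^-_{k+1} = (Y^-_k\times Y^-_1)\cup(Y^-_k\times\{0\})\cup(\{0\}\times Y^-_1)\cup(Z^-_k\times Z^-_1)$ matching the decomposition $S^a_{k+1}=S^a_k\otimes S^a_1\oplus S^a_k\otimes\bC v^a_1\oplus \bC v^a_k\otimes S^a_1\oplus \Sym^2(V_k)\otimes \Sym^2(V_1)$ on the categorical side.
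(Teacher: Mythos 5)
Your deduction of the corollary is correct and matches the paper's: the paper likewise treats Corollary~\ref{cor:VO-minus-PSp-2-k} as an immediate consequence of Theorems~\ref{thm:aff-pol-minus-q-iso-symp} and~\ref{thm:psp-iso-q-aut}, with the monoidal equivalence coming from the change of fiber functor implicit in the quantum isomorphism. One correction to your closing recap of Theorem~\ref{thm:aff-pol-minus-q-iso-symp}: the recursion you wrote is the one for the $+$ case; since $Y^e_1 = \emptyset$ (equivalently $S^a_1 = 0$), the $-$ case recursion is $Y^e_{k+1} = Y^h_k \times \{0\} \cup Z^h_k \times Z^e_1$, matched on the categorical side by the two-term decomposition $S^a_{k+1} = S^s_k \otimes \bC v^a_1 \oplus \medwedge^2(\bC^{2^k}) \otimes \Sym^2(\bC^2)$, which mixes the symmetric data in the first factor with the antisymmetric data in the second.
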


\begin{proof}[Proof sketch]
This time we consider the quadratic form
\[
Q_e(x_0, y_0, \dots, x_{2 k - 1}, y_{2 k - 1}) = Q_h(x_0, y_0, \dots, x_{2 k - 2}, y_{2 k - 2}) + x_{2 k - 1}^2 + y_{2 k - 1}^2 + x_{2 k - 1} y_{2 k - 1}
\]
that gives $\VO^-(2 k, 2)$ as the Cayley graph with respect to the subset
\[
Y_k = Y^e_k = \{v = (x_0, y_0, \dots, x_{2 k - 1}, y_{2 k - 1}) \mid v \neq 0, Q_e(v) = 0 \}.
\]
Since $Y^e_1 = \emptyset$, the recursive relation is
\[
Y^e_{k+1} = Y^h_k \times \{0\} \cup Z^h_k \times Z^e_1
\]
for
\[
Z^e_1 = \rV(2, 2) \setminus \{ 0 \} = \{v = (x_0, y_0) \mid Q_e(v) = 1 \}.
\]

As for the vectors $v^a_k$ and the spaces $S^a_k$, we have
\begin{align*}
v^a_{k + 1} &= v^s_k \otimes v^a_1,&
S^a_{k + 1} &= S^s_k \otimes \bC v^a_1 \oplus \medwedge^2(\bC^{2^k}) \otimes \Sym^2(\bC^2)
\end{align*}

Since we know the correspondence between $Y^h_k$ and $S^s_k$, it only remains to compare the quantum graphs on $M_2(\bC)$ whose edge projections correspond to $Z^e_1$ and $\Sym^2(\bC^2)$.
But both of them gives the simple quantum graph with three edges, hence we are done.
\end{proof}

\appendix
\section{Computer algebra computation}\label{sec:appendix}

The computation for $X = \rO^-(6, q)$ for $q = 2, 3$ and the McLaughlin graph was implemented as \texttt{python} programs based on the \texttt{SageMath} library.
The grogram files in \texttt{Jupyter Notebook} format and their PDF renderings are attached as supplementary files.

In these programs, we compute the coefficients $c_\Gamma^{x_0, \dots, x_3} d_\Gamma^{x_0, \dots, x_3}$ in~\eqref{eq:a-Gamma-element}, which is the number of compatible assignments of vertices of $X$ to vertexes of $\Gamma$, by an optimized search algorithm.

The graph $\Gamma$ is specified by the following parameters:
\begin{itemize}
\item an integer representing the number of external vertices, which is equal to the integer $k$ such that $a_\Gamma \in P^X_{k,+}$;
\item a partition of the set $\{0, \dots, k - 1 \}$, representing the grouping of vertices joined by double strike edges;
\item a list of pairs of integers $(i, j)$, representing single strike edges between the $i$-th and the $j$-th external vertices;
\item a list of pairs of integers $(i, j)$, representing single strike edges between the $i$-th external and the $j$-th internal vertices;
\item a list of pairs of integers $(i, j)$, representing single strike edges between the $i$-th and the $j$-th internal vertices; and
\item an integer $n$, representing the number of internal vertices.
\end{itemize}
The main counting algorithm, implemented inside the \texttt{coeff\_func} function, takes candidates $x_0, \dots, x_{k-1}$ for the external vertices as inputs, and returns the number of $n$-tuples of vertices of $X$ fitting the above conditions for the internal vertices of $\Gamma$.
It first decides if the $x_i$ are consistent with the partition, and returns $0$ if not.
It then starts counting over the $n$-tuples of vertices $y_0, \ldots, y_{n-1}$ matching the conditions specified by the three kinds of lists of pairs as above.
We use the \texttt{SageMath} library function to get list of neighbors for vertices in $X$, and discard the incompatible candidates for the $y_j$ as early as possible.

In case of the McLaughlin graph, and the $\Gamma_j$ as in Table~\ref{tab:graphs}, the program completes in about 100 hours divided by the number of CPUs running at 3.2 GHz.



\subsection{Extra elements for \texorpdfstring{$\rO^-(6,3)$}{O-(6,3)}}

We focus on the elements of $P^{\rO^-(6,3)}_{4,+}$.
Suppose we have $a, b \in P^{\rO^-(6,3)}_{4,+}$ presented as in~\eqref{eq:elem-P-k-from-coeff}.
We then have their product $a * b$ with the coefficient
\begin{equation}\label{eq:prod-in-P-4}
(a * b)^{x_0, x_1, x_2, x_3} = \sum_{y, z} a^{x_0, x_1, y, z} b^{z, y, x_2, x_3},
\end{equation}
together with the rotation $\operatorname{rot}(a)$ and inverse rotation $\operatorname{invrot}(a)$ given by
\[
\operatorname{rot}(a)^{x_0, x_1, x_2, x_3} = a^{x_3, x_0, x_1, x_2}, \quad
\operatorname{invrot}(a)^{x_0, x_1, x_2, x_3} = a^{x_1, x_2, x_3, x_0},
\]
again in $P^{\rO^-(6,3)}_{2,+}$.

To compute the product~\eqref{eq:prod-in-P-4}, we exploit the fact that $a^{x_0, \cdots, x_3}$ and $b^{x_0, \cdots, x_3}$ are constant on each $\Aut(X)$-orbit of $V^4$.
Thus, to determine the value of~\eqref{eq:prod-in-P-4} for a given quadruple $x_0, \dots, x_3$, we can count the orbits $\omega_1, \omega_2 \subset V^4$ respectively representing $(x_0, x_1, e, f)$ and $(f, e, x_2, x_3)$ for all possibple choices of $e$ and $f$, and keep the multiplicities of $(\omega_1, \omega_2)$ in this counting (which is independent of the choice of $a \in P^X_{4,+}$) implemented in the \texttt{prod\_mult} function.
For $X = O^-(6, 3)$, this counting for all orbit types for $x_0, \dots, x_3$ takes about 130 hours divided by the number of CPUs running at 3.2 GHz.
To facilitate the computation, once we generate this multiplicity table, we store it in a data file (\texttt{tup.pickle}) and load its content for later calculations.

Using these operations, we find additional elements as follows, giving the $134$-dimensional intertwiner space on $C(V \times V)$ for the quantum automorphism group, as expected from the classical automorphism group.
\begin{itemize}
\item Element 204: product of 195-th and 195-th element.
\item Element 205: product of 195-th and 198-th element.
\item Element 206: rotation of 204-th element.
\item Element 207: rotation of 205-th element.
\item Element 208: product of 195-th and 206-th element.
\item Element 209: product of 195-th and 207-th element.
\item Element 210: product of 198-th and 207-th element.
\item Element 211: product of 200-th and 207-th element.
\item Element 212: product of 201-th and 207-th element.
\item Element 213: product of 207-th and 205-th element.
\item Element 214: product of 207-th and 207-th element.
\item Element 215: rotation of 209-th element.
\item Element 216: rotation of 210-th element.
\item Element 217: inverse rotation of 212-th element.
\item Element 218: product of 65-th and 217-th element.
\item Element 219: product of 67-th and 216-th element.
\item Element 220: product of 213-th and 213-th element.
\item Element 221: product of 213-th and 214-th element.
\end{itemize}
For example, Element 209 is represented by the graph of Figure~\ref{fig:elem-209}.

\begin{figure}[h]

\def\globalscale{1.000000}
\begin{tikzpicture}[y=1pt, x=1pt, yscale=\globalscale,xscale=\globalscale, every node/.append style={scale=\globalscale}, inner sep=0pt, outer sep=0pt]
  \path[draw=black,line cap=,line width=0.5pt] (94.966, 728.891) -- (83.716, 
  717.641) -- (94.966, 706.391) -- (106.216, 717.641) -- cycle;

  \path[draw=black,line cap=,line width=0.5pt] (83.716, 717.641) -- (76.216, 
  698.891) -- (94.966, 706.391) -- (113.716, 698.891) -- (106.216, 717.641) -- 
  (113.716, 736.391) -- (94.966, 728.891) -- (76.216, 736.391) -- cycle;

  \path[draw=black,line cap=,line width=0.5pt] (132.466, 728.891) -- (121.216, 
  717.641) -- (132.466, 706.391) -- (143.716, 717.641) -- cycle;

  \path[draw=black,line cap=,line width=0.5pt] (121.216, 717.641) -- (113.716, 
  698.891) -- (132.466, 706.391) -- (151.216, 698.891) -- (143.716, 717.641) -- 
  (151.216, 736.391) -- (132.466, 728.891) -- (113.716, 736.391) -- cycle;

  \path[draw=black,line cap=,line width=0.5pt] (113.716, 772.391) -- (102.466, 
  761.141) -- (113.716, 749.891) -- (124.966, 761.141) -- cycle;

  \path[draw=black,line cap=,line width=0.5pt] (102.466, 761.141) -- (76.216, 
  736.391) -- (113.716, 749.891) -- (151.216, 736.391) -- (124.966, 761.141) -- 
  (132.466, 779.891) -- (113.716, 772.391) -- (94.966, 779.891) -- cycle;

  \path[draw=black,line cap=,line width=0.5pt] (151.216, 698.891) -- (151.716, 
  736.391);

  \path[draw=black,fill=black,line cap=,line width=0.25pt] (102.468, 761.519) 
  circle (0.937pt);

  \path[draw=black,fill=black,line cap=,line width=0.25pt] (124.968, 761.519) 
  circle (0.937pt);

  \path[draw=black,fill=black,line cap=,line width=0.25pt] (113.718, 772.769) 
  circle (0.937pt);

  \path[draw=black,fill=black,line cap=,line width=0.25pt] (113.718, 750.269) 
  circle (0.937pt);

  \path[draw=black,fill=black,line cap=,line width=0.25pt] (83.718, 717.269) 
  circle (0.937pt);

  \path[draw=black,fill=black,line cap=,line width=0.25pt] (106.218, 717.269) 
  circle (0.937pt);

  \path[draw=black,fill=black,line cap=,line width=0.25pt] (94.968, 728.519) 
  circle (0.937pt);

  \path[draw=black,fill=black,line cap=,line width=0.25pt] (94.968, 706.019) 
  circle (0.937pt);

  \path[draw=black,fill=black,line cap=,line width=0.25pt] (121.218, 717.269) 
  circle (0.937pt);

  \path[draw=black,fill=black,line cap=,line width=0.25pt] (143.718, 717.269) 
  circle (0.937pt);

  \path[draw=black,fill=black,line cap=,line width=0.25pt] (132.468, 728.519) 
  circle (0.937pt);

  \path[draw=black,fill=black,line cap=,line width=0.25pt] (132.468, 706.019) 
  circle (0.937pt);

  \path[draw=black,fill=black,line cap=,line width=0.25pt] (113.718, 736.019) 
  circle (0.937pt);

  \path[draw=black,fill=black,line cap=,line width=0.25pt] (113.718, 699.269) 
  circle (0.937pt);

  \path[draw=black,fill=black,line cap=,line width=0.25pt] (76.218, 736.019) 
  circle (0.937pt);

  \path[draw=black,fill=black,line cap=,line width=0.25pt] (151.218, 736.019) 
  circle (0.937pt);

  \path[draw=black,fill=black,line cap=,line width=0.25pt] (76.218, 699.269) 
  circle (0.937pt);

  \path[draw=black,fill=black,line cap=,line width=0.25pt] (151.218, 699.269) 
  circle (0.937pt);

  \path[draw=black,fill=black,line cap=,line width=0.25pt] (94.968, 780.269) 
  circle (0.937pt);

  \path[draw=black,fill=black,line cap=,line width=0.25pt] (132.468, 780.269) 
  circle (0.937pt);

  \node[text=black,line cap=,line width=0.5pt,anchor=south west] (text26) at 
  (89.716, 783.391){$v^e_0$};

  \node[text=black,line cap=,line width=0.5pt,anchor=south west] (text27) at 
  (129.466, 783.391){$v^e_1$};

  \node[text=black,line cap=,line width=0.5pt,anchor=south west] (text28) at 
  (148.966, 687.391){$v^e_2$};

  \node[text=black,line cap=,line width=0.5pt,anchor=south west] (text29) at 
  (72.466, 687.391){$v^e_3$};

\end{tikzpicture}
\caption{Element 209, $v^e_i$ denoting the external vertices}\label{fig:elem-209}
\end{figure}
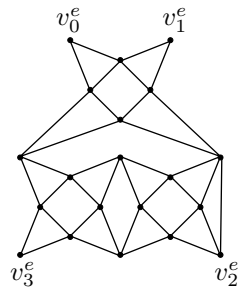

\twocolumn
\begingroup
\renewcommand\arraystretch{1.3}
\topcaption{Coefficients for the flip}\label{table:coeff-flip}
\begin{supertabular}{c|c}
$i$ & coefficient \\
\hline
0 & $- \frac{21025310690896414103783342291}{5264138895030288000000}$ \\
1 & $- \frac{775159348760990269703359247}{87735648250504800000}$ \\
2 & $\frac{3783980408877756001776041}{259957476297792000000}$ \\
3 & $- \frac{124822837096765253849051885573}{438678241252524000000}$ \\
4 & $\frac{6318600791316477269346343}{179970560513856000000}$ \\
5 & $\frac{5677343781809041942707629}{233961728668012800000}$ \\
6 & $\frac{5603735832572982531374861}{389936214446688000000}$ \\
7 & $\frac{608087230517011462976629079}{21056555580121152000000}$ \\
8 & $- \frac{1690603723396487842966226953}{21056555580121152000000}$ \\
9 & $- \frac{2453752911417442177423289467}{21056555580121152000000}$ \\
10 & $\frac{396860824441889550160649099}{3008079368588736000000}$ \\
11 & $- \frac{28951758557641654774576699}{8225217023484825000}$ \\
12 & $- \frac{1208618036590724771727470301019}{752019842147184000000}$ \\
13 & $- \frac{2050005815209742492885835337}{5264138895030288000000}$ \\
14 & $- \frac{6074004870753698602679941397}{1316034723757572000000}$ \\
15 & $- \frac{17293567340745128736343483}{2339617286680128000000}$ \\
16 & $- \frac{9967921877334091734731059}{802154498290329600}$ \\
17 & $- \frac{22381250680490198221750763}{2339617286680128000000}$ \\
18 & $\frac{37660957035372776051657}{51420160146816000000}$ \\
19 & $\frac{140209800516573882863879}{1559744857786752000000}$ \\
20 & $\frac{118395187592597701932106297}{7018851860040384000000}$ \\
21 & $\frac{49934704134745083104792569}{3239470089249408000000}$ \\
22 & $- \frac{40907453552330906702889577}{21056555580121152000000}$ \\
23 & $- \frac{48434353432765879475543347}{14037703720080768000000}$ \\
24 & $\frac{100493175082130667319971331}{7018851860040384000000}$ \\
25 & $\frac{14421958661249551497312109}{4679234573360256000000}$ \\
26 & $- \frac{499921631693359438748793583}{21056555580121152000000}$ \\
27 & $- \frac{591723381808583728099825673}{42113111160242304000000}$ \\
28 & $- \frac{34331249030109399810581475463}{404933761156176000000}$ \\
29 & $\frac{311373439594372577421588059}{42113111160242304000000}$ \\
30 & $- \frac{283292495474950590196242290209}{526413889503028800000}$ \\
31 & $\frac{820966527709408388603269769}{42113111160242304000000}$ \\
32 & $- \frac{24814732612005644419075931}{1619735044624704000000}$ \\
33 & $- \frac{76506219520402768685213461}{4211311116024230400000}$ \\
34 & $- \frac{504523592175457796076280063}{21056555580121152000000}$ \\
35 & $- \frac{439138366247692176257771977}{21056555580121152000000}$ \\
36 & $- \frac{85004076835501501130303057}{100269312286291200000}$ \\
37 & $- \frac{4523480799755027996506921381}{3509425930020192000000}$ \\
38 & $\frac{1704649393075018353199331743}{10528277790060576000000}$ \\
39 & $\frac{31407151288514058797090503}{1504039684294368000000}$ \\
40 & $\frac{181372036256408270433052153}{97484053611672000000}$ \\
41 & $\frac{69426960623479586085361171}{438678241252524000000}$ \\
42 & $\frac{3632738612611356417519577}{501346561431456000000}$ \\
43 & $\frac{17756729642614719631237199}{7018851860040384000000}$ \\
44 & $\frac{7809308361379868435412287}{7018851860040384000000}$ \\
45 & $\frac{4344858937599602432784313}{501346561431456000000}$ \\
46 & $\frac{378345907503906257466913}{119980373675904000000}$ \\
47 & $\frac{327938166650911785683377639}{42113111160242304000000}$ \\
48 & $\frac{2345240069187822446819947}{1203231747435494400000}$ \\
49 & $- \frac{2530253127740291388783929}{1203231747435494400000}$ \\
50 & $\frac{64213521069402390407030837}{4679234573360256000000}$ \\
51 & $- \frac{60076913716411309957137121}{42113111160242304000000}$ \\
52 & $\frac{91972419008734623631908431}{42113111160242304000000}$ \\
53 & $\frac{220940303454703192578438949}{42113111160242304000000}$ \\
54 & $- \frac{28131221176914158729598656957}{1052827779006057600000}$ \\
55 & $\frac{10759670886407811451175467}{1316034723757572000000}$ \\
56 & $- \frac{9670654433790878682261521}{194968107223344000000}$ \\
57 & $\frac{6568664772395890983688919}{1504039684294368000000}$ \\
58 & $\frac{4089918887005753650640307}{57847680165168000000}$ \\
59 & $- \frac{391304893103345694802227919}{10528277790060576000000}$ \\
60 & $- \frac{60228723504988608292879381}{779872428893376000000}$ \\
61 & $- \frac{46517804816269114630542257}{2339617286680128000000}$ \\
62 & $\frac{1792501608307179471500893141}{4211311116024230400000}$ \\
63 & $- \frac{2562643822465951898772959}{111410346984768000000}$ \\
64 & $\frac{99743337638949117510798139}{2339617286680128000000}$ \\
65 & $- \frac{77874728714889089517033007}{7018851860040384000000}$ \\
66 & $\frac{12327363318953623188834638387}{21056555580121152000000}$ \\
67 & $- \frac{16051108745785519948048603}{467923457336025600000}$ \\
68 & $- \frac{134354013527742926867854561}{2632069447515144000000}$ \\
69 & $\frac{28179686564533873357501501}{5264138895030288000000}$ \\
70 & $\frac{190468473324836374827033769}{10528277790060576000000}$ \\
71 & $\frac{63472551110146869334658983}{10528277790060576000000}$ \\
72 & $\frac{1755414013383050023926523}{526413889503028800000}$ \\
73 & $\frac{26918225339018981322685849}{5264138895030288000000}$ \\
74 & $- \frac{1266256489741361416481119}{263206944751514400000}$ \\
75 & $- \frac{3602299587263288289207353}{3509425930020192000000}$ \\
76 & $- \frac{1766124882991816897360051}{188004960536796000000}$ \\
77 & $\frac{8329653291974128124522623}{501346561431456000000}$ \\
78 & $- \frac{349256178260586738613178261}{42113111160242304000000}$ \\
79 & $- \frac{576828186158173893191973527}{42113111160242304000000}$ \\
80 & $\frac{11836299263539972226462893}{4679234573360256000000}$ \\
81 & $\frac{812081888145888689089379}{2005386245725824000000}$ \\
82 & $\frac{15352502384413187652829633}{25308360072261000000}$ \\
83 & $\frac{1507410969773893300536195889}{10528277790060576000000}$ \\
84 & $- \frac{290248847007076564710823511}{438678241252524000000}$ \\
85 & $\frac{89346495458233875284448289}{501346561431456000000}$ \\
86 & $- \frac{289267752647770282692999131}{42113111160242304000000}$ \\
87 & $- \frac{20991491500412962231559329}{2005386245725824000000}$ \\
88 & $- \frac{458180909353855495122321401}{42113111160242304000000}$ \\
89 & $\frac{409708784791239410761622459}{14037703720080768000000}$ \\
90 & $\frac{748359190716401019875537}{179970560513856000000}$ \\
91 & $- \frac{7685374823519151442703}{802337889808000000}$ \\
92 & $\frac{258290347078253213973666271}{21056555580121152000000}$ \\
93 & $- \frac{4067240220760043990478323}{421131111602423040000}$ \\
94 & $\frac{233204053737763361569283}{467923457336025600000}$ \\
95 & $- \frac{602475955877564467491766433}{10528277790060576000000}$ \\
96 & $\frac{229094132941971430352365933}{21056555580121152000000}$ \\
97 & $\frac{14271865984752071672391613}{350942593002019200000}$ \\
98 & $\frac{326547442577039744976329}{62668320178932000000}$ \\
99 & $- \frac{9553216903901834656822391}{2632069447515144000000}$ \\
100 & $\frac{583208470666217579890828567}{10528277790060576000000}$ \\
101 & $\frac{582766378504743432268855081}{10528277790060576000000}$ \\
102 & $- \frac{24853144753972129044732689}{10528277790060576000000}$ \\
103 & $\frac{4106647997386881253198387}{2005386245725824000000}$ \\
104 & $- \frac{7267185985016541196372271}{3509425930020192000000}$ \\
105 & $\frac{30137311627326192217053127}{14037703720080768000000}$ \\
106 & $- \frac{167393986118802382470553}{66846208190860800000}$ \\
107 & $\frac{18471186351808224848797337}{21056555580121152000000}$ \\
108 & $\frac{2455002067106392663553777}{3008079368588736000000}$ \\
109 & $- \frac{27041765619415861475962007}{7018851860040384000000}$ \\
110 & $\frac{1124194618777486169211931}{1169808643340064000000}$ \\
111 & $\frac{2688974097439499562596233}{87735648250504800000}$ \\
112 & $\frac{24094120956067598957637821}{752019842147184000000}$ \\
116 & $- \frac{56497279530375275113711853}{42113111160242304000000}$ \\
117 & $- \frac{92756214793146688220535023}{42113111160242304000000}$ \\
195 & $\frac{1852346431848209045979973}{105282777900605760000}$ \\
203 & $\frac{1293058632732080338549}{58490432167003200000}$ \\
204 & $- \frac{63954424343082186417307}{350942593002019200000}$ \\
205 & $\frac{972518265407684337031}{263206944751514400000}$ \\
206 & $- \frac{4477026894095613901}{562408001605800000}$ \\
207 & $- \frac{433961676362372527981}{4211311116024230400000}$ \\
208 & $- \frac{2378638485027753923}{7498773354744000000}$ \\
209 & $- \frac{1664547861260593189}{1124816003211600000}$ \\
210 & $- \frac{2888771619615658333}{2812040008029000000}$ \\
211 & $- \frac{822654652034767}{15622444489050000}$ \\
212 & $\frac{1394067727155571}{857002669113600000}$ \\
213 & $- \frac{193724700491}{1785422227320000}$ \\
214 & $\frac{193644819637177}{89271111366000000}$ \\
215 & $\frac{85865920907167}{33476666762250000}$ \\
216 & $- \frac{6727532642923}{11902814848800000}$ \\
217 & $\frac{10966168743983}{29995093418976000}$ \\
218 & $- \frac{308927622553433}{53562666819600000}$ \\
219 & $\frac{11540045183}{73130894431027200000}$ \\
220 & $- \frac{291263071}{13712042705817600000}$ \\
\end{supertabular}
\endgroup
\onecolumn

\begin{bibdiv}
\begin{biblist}
\bib{MR2146039}{article}{
      author={Banica, Teodor},
       title={Quantum automorphism groups of homogeneous graphs},
        date={2005},
        ISSN={0022-1236},
     journal={J. Funct. Anal.},
      volume={224},
      number={2},
       pages={243\ndash 280},
         url={https://doi.org/10.1016/j.jfa.2004.11.002},
         doi={10.1016/j.jfa.2004.11.002},
      review={\MR{2146039}},
}

\bib{MR2335703}{article}{
      author={Banica, Teodor},
      author={Bichon, Julien},
       title={Quantum automorphism groups of vertex-transitive graphs of order
  {$\leq11$}},
        date={2007},
        ISSN={0925-9899},
     journal={J. Algebraic Combin.},
      volume={26},
      number={1},
       pages={83\ndash 105},
         url={https://doi.org/10.1007/s10801-006-0049-9},
         doi={10.1007/s10801-006-0049-9},
      review={\MR{2335703}},
}

\bib{MR1937403}{article}{
      author={Bichon, Julien},
       title={Quantum automorphism groups of finite graphs},
        date={2003},
        ISSN={0002-9939},
     journal={Proc. Amer. Math. Soc.},
      volume={131},
      number={3},
       pages={665\ndash 673},
         url={https://doi.org/10.1090/S0002-9939-02-06798-9},
         doi={10.1090/S0002-9939-02-06798-9},
      review={\MR{1937403}},
}

\bib{MR2096666}{article}{
      author={Bichon, Julien},
       title={Free wreath product by the quantum permutation group},
        date={2004},
        ISSN={1386-923X},
     journal={Algebr. Represent. Theory},
      volume={7},
      number={4},
       pages={343\ndash 362},
  eprint={\href{http://arxiv.org/abs/math/0107029}{\texttt{arXiv:math/0107029
  [math.QA]}}},
         url={http://dx.doi.org/10.1023/B:ALGE.0000042148.97035.ca},
         doi={10.1023/B:ALGE.0000042148.97035.ca},
      review={\MR{2096666 (2005j:46043)}},
}

\bib{MR3592517}{article}{
      author={Bisch, Dietmar},
      author={Jones, Vaughan F.~R.},
      author={Liu, Zhengwei},
       title={Singly generated planar algebras of small dimension, {P}art
  {III}},
        date={2017},
        ISSN={0002-9947},
     journal={Trans. Amer. Math. Soc.},
      volume={369},
      number={4},
       pages={2461\ndash 2476},
         url={https://doi.org/10.1090/tran/6719},
         doi={10.1090/tran/6719},
      review={\MR{3592517}},
}

\bib{brouwer-webpage-srg}{misc}{
author={Brouwer, Andries E.},
title={SRG family parameters},
year={2011},
url={https://www.win.tue.nl/~aeb/graphs/srghub.html}
}

\bib{MR4350112}{book}{
      author={Brouwer, Andries~E.},
      author={Van~Maldeghem, H.},
       title={Strongly regular graphs},
      series={Encyclopedia of Mathematics and its Applications},
   publisher={Cambridge University Press, Cambridge},
        date={2022},
      volume={182},
        ISBN={978-1-316-51203-6},
         url={https://doi-org.ezproxy.uio.no/10.1017/9781009057226},
         doi={10.1017/9781009057226},
      review={\MR{4350112}},
}

\bib{MR805453}{article}{
      author={Cameron, P.~J.},
      author={Macpherson, H.~D.},
       title={Rank three permutation groups with rank three subconstituents},
        date={1985},
        ISSN={0095-8956},
     journal={J. Combin. Theory Ser. B},
      volume={39},
      number={1},
       pages={1\ndash 16},
         url={https://doi.org/10.1016/0097-3165(85)90078-0},
         doi={10.1016/0097-3165(85)90078-0},
      review={\MR{805453}},
}

\bib{arXiv:1902.08984}{misc}{
      author={Edge, Joshua~R.},
       title={Spin models for singly-generated {Y}ang--{B}axter planar
  algebras},
         how={preprint},
        date={2019},
      eprint={\href{http://arxiv.org/abs/1902.08984}{\texttt{arXiv:1902.08984
  [math.QA]}}},
}

\bib{MR3447719}{article}{
      author={Ghosh, Shamindra~Kumar},
      author={Jones, Corey},
       title={Annular representation theory for rigid {$C^*$}-tensor
  categories},
        date={2016},
        ISSN={0022-1236},
     journal={J. Funct. Anal.},
      volume={270},
      number={4},
       pages={1537\ndash 1584},
      eprint={\href{http://arxiv.org/abs/1502.06543}{\texttt{arXiv:1502.06543
  [math.OA]}}},
         url={http://dx.doi.org/10.1016/j.jfa.2015.08.017},
         doi={10.1016/j.jfa.2015.08.017},
      review={\MR{3447719}},
}

\bib{arXiv:2106.08787}{misc}{
      author={Gromada, Daniel},
       title={Quantum symmetries of {C}ayley graphs of abelian groups},
         how={preprint},
        date={2021},
      eprint={\href{http://arxiv.org/abs/2106.08787}{\texttt{arXiv:2106.08787
  [math.QA]}}},
}

\bib{arXiv:2109.13618}{misc}{
      author={Gromada, Daniel},
       title={Some examples of quantum graphs},
         how={preprint},
        date={2021},
      eprint={\href{http://arxiv.org/abs/2109.13618}{\texttt{arXiv:2109.13618
  [math.QA]}}},
}

\bib{arXiv:math/9909027}{misc}{
      author={Jones, Vaughan F.~R.},
       title={Planar algebras, {I}},
         how={preprint},
        date={1999},
  eprint={\href{http://arxiv.org/abs/math/9909027}{\texttt{arXiv:math/9909027
  [math.QA]}}},
}

\bib{MR3922286}{article}{
      author={Kodiyalam, Vijay},
      author={Sruthymurali},
      author={Saini, Sohan~Lal},
      author={Sunder, V.~S.},
       title={On a presentation of the spin planar algebra},
        date={2019},
        ISSN={0253-4142},
     journal={Proc. Indian Acad. Sci. Math. Sci.},
      volume={129},
      number={2},
       pages={Paper No. 27, 11},
         url={https://mathscinet.ams.org/mathscinet-getitem?mr=3922286},
         doi={10.1007/s12044-019-0472-1},
      review={\MR{3922286}},
}

\bib{MR1403861}{article}{
      author={Kuperberg, Greg},
       title={Spiders for rank {$2$} {L}ie algebras},
        date={1996},
        ISSN={0010-3616},
     journal={Comm. Math. Phys.},
      volume={180},
      number={1},
       pages={109\ndash 151},
         url={http://projecteuclid.org/euclid.cmp/1104287237},
      review={\MR{1403861}},
}

\bib{MR1469634}{article}{
      author={Kuperberg, Greg},
       title={Jaeger's {H}igman-{S}ims state model and the {$B_2$} spider},
        date={1997},
        ISSN={0021-8693},
     journal={J. Algebra},
      volume={195},
      number={2},
       pages={487\ndash 500},
         url={https://doi.org/10.1006/jabr.1997.7045},
         doi={10.1006/jabr.1997.7045},
      review={\MR{1469634}},
}

\bib{arXiv:1507.06030}{misc}{
      author={Liu, Zhengwei},
       title={Yang--{B}axter relation planar algebras},
         how={preprint},
        date={2015},
      eprint={\href{http://arxiv.org/abs/1507.06030}{\texttt{arXiv:1507.06030
  [math.OA]}}},
}

\bib{MR4232075}{incollection}{
      author={Man\v{c}inska, Laura},
      author={Roberson, David~E.},
       title={Quantum isomorphism is equivalent to equality of homomorphism
  counts from planar graphs},
        date={2020},
   booktitle={2020 {IEEE} 61st {A}nnual {S}ymposium on {F}oundations of
  {C}omputer {S}cience},
   publisher={IEEE Computer Soc., Los Alamitos, CA},
       pages={661\ndash 672},
         url={https://doi.org/10.1109/FOCS46700.2020.00067},
         doi={10.1109/FOCS46700.2020.00067},
      review={\MR{4232075}},
}

\bib{arXiv:2110.09085}{misc}{
      author={Matsuda, Junichiro},
       title={Classification of quantum graphs on {$M_2$} and their quantum
  automorphism groups},
         how={preprint},
        date={2021},
      eprint={\href{http://arxiv.org/abs/2110.09085}{\texttt{arXiv:2110.09085
  [math.OA]}}},
}

\bib{MR3849575}{article}{
      author={Musto, Benjamin},
      author={Reutter, David},
      author={Verdon, Dominic},
       title={A compositional approach to quantum functions},
        date={2018},
        ISSN={0022-2488},
     journal={J. Math. Phys.},
      volume={59},
      number={8},
       pages={081706, 42},
         url={https://doi.org/10.1063/1.5020566},
         doi={10.1063/1.5020566},
      review={\MR{3849575}},
}

\bib{MR3509018}{article}{
      author={Neshveyev, Sergey},
      author={Yamashita, Makoto},
       title={Drinfeld {C}enter and {R}epresentation {T}heory for {M}onoidal
  {C}ategories},
        date={2016},
        ISSN={0010-3616},
     journal={Comm. Math. Phys.},
      volume={345},
      number={1},
       pages={385\ndash 434},
      eprint={\href{http://arxiv.org/abs/1501.07390}{\texttt{arXiv:1501.07390
  [math.OA]}}},
         url={http://dx.doi.org/10.1007/s00220-016-2642-7},
         doi={10.1007/s00220-016-2642-7},
      review={\MR{3509018}},
}

\bib{MR3406647}{article}{
      author={Popa, Sorin},
      author={Vaes, Stefaan},
       title={Representation theory for subfactors, {$\lambda$}-lattices and
  {$\rm C^*$}-tensor categories},
        date={2015},
        ISSN={0010-3616},
     journal={Comm. Math. Phys.},
      volume={340},
      number={3},
       pages={1239\ndash 1280},
      eprint={\href{http://arxiv.org/abs/1412.2732}{\texttt{arXiv:1412.2732
  [math.OA]}}},
         url={http://dx.doi.org/10.1007/s00220-015-2442-5},
         doi={10.1007/s00220-015-2442-5},
      review={\MR{3406647}},
}

\bib{arXiv:1903.02012}{misc}{
      author={Ren, Yunxiang},
       title={Universal skein theory for group actions},
         how={preprint},
        date={2019},
      eprint={\href{http://arxiv.org/abs/1903.02012}{\texttt{arXiv:1903.02012
  [math.OA]}}},
}

\bib{MR3829728}{article}{
      author={Schmidt, Simon},
       title={The {P}etersen graph has no quantum symmetry},
        date={2018},
        ISSN={0024-6093},
     journal={Bull. Lond. Math. Soc.},
      volume={50},
      number={3},
       pages={395\ndash 400},
         url={https://doi.org/10.1112/blms.12154},
         doi={10.1112/blms.12154},
      review={\MR{3829728}},
}

\bib{schmidt-thesis}{thesis}{
      author={Schmidt, Simon},
       title={Quantum automorphism groups of finite graphs},
        type={Ph.D. Thesis},
      school={Universit\"{a}t des Saarlandes},
        date={2020},
}

\bib{MR4117054}{article}{
      author={Schmidt, Simon},
       title={Quantum automorphisms of folded cube graphs},
        date={2020},
        ISSN={0373-0956},
     journal={Ann. Inst. Fourier (Grenoble)},
      volume={70},
      number={3},
       pages={949\ndash 970},
         url={http://aif.cedram.org/item?id=AIF_2020__70_3_949_0},
      review={\MR{4117054}},
}

\bib{MR1637425}{article}{
      author={Wang, Shuzhou},
       title={Quantum symmetry groups of finite spaces},
        date={1998},
        ISSN={0010-3616},
     journal={Comm. Math. Phys.},
      volume={195},
      number={1},
       pages={195\ndash 211},
         url={http://dx.doi.org/10.1007/s002200050385},
         doi={10.1007/s002200050385},
      review={\MR{1637425 (99h:58014)}},
}

\bib{MR901157}{article}{
      author={Woronowicz, S.~L.},
       title={Compact matrix pseudogroups},
        date={1987},
        ISSN={0010-3616},
     journal={Comm. Math. Phys.},
      volume={111},
      number={4},
       pages={613\ndash 665},
         url={http://projecteuclid.org/getRecord?id=euclid.cmp/1104159726},
         doi={10.1007/BF01219077},
      review={\MR{901157 (88m:46079)}},
}

\end{biblist}
\end{bibdiv}
\end{document}